\apptocmd{\maketitle}{\thispagestyle{empty}}{}{}
\patchcmd{\@setaddresses}{\indent}{\noindent}{}{}
\patchcmd{\@setaddresses}{\indent}{\noindent}{}{}
\patchcmd{\@setaddresses}{\indent}{\noindent}{}{}
\patchcmd{\@setaddresses}{\indent}{\noindent}{}{}
\newlist{paraenum}{enumerate}{1}
\setlist[paraenum]{wide, label=(\arabic*)}
\newlist{inparaenum}{enumerate*}{1}
\setlist[inparaenum]{label=(\arabic*)}
\newlist{paradesc}{description}{1}
\setlist[paradesc]{wide}
\setlist[itemize]{leftmargin=4em, rightmargin=4.5em}
\setlist[enumerate]{leftmargin=4em, rightmargin=4.5em}
\setlist[description]{leftmargin=4em, labelindent=4em, rightmargin=4.5em}
\g@addto@macro\bfseries{\boldmath}
\definecolor{darkred}{RGB}{160,0,0}
\definecolor{darkblue}{RGB}{0,0,160}
\newcommand{\CIperp}{\mathrel{\text{$\perp\mkern-10mu\perp$}}}
\newcommand{\CI}[1]{{%
  \setsepchar{{:}/{|}/{,}}
  \ignoreemptyitems
  \readlist*\mylist{#1}
  \ifthenelse{\listlen\mylist[] = 2}{\mylist[2]}{}%
  [\mylist[1,1,1] \CIperp \ifthenelse{\listlen\mylist[1,1] = 2}{\mylist[1,1,2]}{\mylist[1,1,1]}%
  \ifthenelse{\listlen\mylist[1] = 2}{{} \mid \mylist[1,2]}{}]%
}}
\newcommand{\Dsep}[1]{{%
  \setsepchar{{:}/{|}/{,}}
  \ignoreemptyitems
  \readlist*\mylist{#1}
  \ifthenelse{\listlen\mylist[] = 2}{\mylist[2]}{}%
  [\mylist[1,1,1] \perp_d \ifthenelse{\listlen\mylist[1,1] = 2}{\mylist[1,1,2]}{\mylist[1,1,1]}%
  \ifthenelse{\listlen\mylist[1] = 2}{{} \mid \mylist[1,2]}{}]%
}}
\newcommand{\Dconn}[1]{{%
  \setsepchar{{:}/{|}/{,}}
  \ignoreemptyitems
  \readlist*\mylist{#1}
  \ifthenelse{\listlen\mylist[] = 2}{\mylist[2]}{}%
  [\mylist[1,1,1] \not\perp_d \ifthenelse{\listlen\mylist[1,1] = 2}{\mylist[1,1,2]}{\mylist[1,1,1]}%
  \ifthenelse{\listlen\mylist[1] = 2}{{} \mid \mylist[1,2]}{}]%
}}
\newcommand{\CIproc}[1]{\StrSubstitute[0]{#1}{|}{\mid}}
\newcommand{\CIdOp}{\triangle}
\newcommand{\CIbOp}{\square}
\NewDocumentCommand{\CId}{mG{}}{\CIdOp #2({\CIproc{#1}})}
\NewDocumentCommand{\CIb}{mG{}}{\CIbOp #2({\CIproc{#1}})}
\renewcommand{\hat}[1]{\widehat{#1}}
\renewcommand{\tilde}[1]{\widetilde{#1}}
\newcommand{\dual}{\rceil}
\theoremstyle{definition}
\newtheorem{theorem}{Theorem}[section]
\newtheorem*{theorem*}{Theorem}
\newtheorem{lemma}[theorem]{Lemma}
\newtheorem{cor}[theorem]{Corollary}
\newtheorem*{corollary*}{Corollary}
\newtheorem*{fact*}{Fact}
\newtheorem{example}[theorem]{Example}
\newtheorem{definition}[theorem]{Definition}
\newtheorem{observation}[theorem]{Observation}
\newtheorem*{convention*}{Convention}
\newtheorem{open}{Open problem}
\newtheorem*{namedgenthm}{\namedgenthmname}
\newcounter{namedgenthm}
\newenvironment{genthm*}[1]{%
  \def\namedgenthmname{#1}%
  \refstepcounter{namedgenthm}%
  \namedgenthm\def\@currentlabel{#1}%
  \cref@constructprefix{namedgenthm}{\cref@result}%
  \protected@edef\@currentlabel{#1}%
  \protected@edef\@currentlabelname{genthm}%
  \protected@edef\cref@currentlabel{%
    [namedgenthm][][\cref@result]%
    #1%
  }%
}
{\endnamedgenthm}
\crefname{namedgenthm}{}{}
\Crefname{namedgenthm}{}{}
\newcommand{\Macaulay}[1]{%
  \ifthenelse{\equal{\detokenize{#1}}{\detokenize{2}}}{}%
  {\PackageError{tboege-preprint}{Always use \protect\Macaulay2, never \protect\Macaulay\space alone}{}}%
  \texttt{Macaulay#1}\xspace}
\newcommand{\TodoColor}[2]{\colorlet{TodoColor#1}{#2}}
\NewDocumentEnvironment{TodoList}{oO{TODO\IfNoValueF{#1}{~(#1)}}+b}{%
  \begingroup%
  \noindent\sffamily\color{TodoColor\IfNoValueTF{#1}{Default}{#1}}%
  \textbf{#2:}
  \begin{itemize}
  #3
  \end{itemize}%
  \endgroup%
}{}
\NewDocumentCommand{\todo}{oO{TODO\IfNoValueF{#1}{~(#1)}}+m}{%
  \begingroup%
  \noindent\sffamily\color{TodoColor\IfNoValueTF{#1}{Default}{#1}}%
  \textbf{#2:} #3%
  \endgroup%
  \xspace%
}
\newcites{soft}{Mathematical software}
\def\cal#1{\mathcal{#1}}
\def\dv{\mathbb}
\def\ci{\perp\!\!\!\perp} %
\def\sta{{\cal K}} %
\def\gsta{{\cal T}} %
\def\staext{{\cal K}^{\bullet}}  %
\def\gstaext{{\cal T}^{\bullet}} %
\def\bij{\varphi} %
\def\altbij{\psi} %
\def\bijelse{\rho} %
\def\mod{{\cal M}} %
\def\altmod{{\cal R}} %
\def\modext{{\cal E}} %
\def\dual{\star} %
\def\ffam{\mathfrak{f}} %
\def\profam{\mathfrak{pr}} %
\def\semgr{\mathfrak{sg}} %
\def\strum{\mathfrak{st}} %
\def\gra{\mathfrak{gr}} %
\def\cogr{\mathfrak{cgr}} %
\def\ungr{\mathfrak{ug}} %
\def\sa{\diamond} %
\def\lift{\nearrow} %
\def\Sat{\mathsf{SAT}}
\def\sfX{\mbox{\sf X}}
\def\isfX{\mbox{\scriptsize\sf X}}
\def\bxi{\mbox{\boldmath$\xi$}}
\def\boeta{\mbox{\boldmath$\eta$}}
\def\bzeta{\mbox{\boldmath$\zeta$}}
\def\ibxi{\mbox{\scriptsize\boldmath$\xi$}}
\def\iboeta{\mbox{\scriptsize\boldmath$\eta$}}
\def\ibzeta{\mbox{\scriptsize\boldmath$\zeta$}}
\def\latt{{\cal L}}
\def\irre{{\cal I}}
\def\cover{\prec\!\!\!\cdot\,\,\,}
\def\cl{\mbox{\rm cl}}
\def\calF{{\cal F}}
\def\id{\mbox{\rm id}}
\def\rel{\odot}
\title%
[Self-adhesivity in lattices of abstract conditional independence models]%
{Self-adhesivity in lattices of \\%
abstract conditional independence models}
\author{Tobias Boege}
\address[Tobias Boege]{Department of Mathematics, KTH Royal Institute of Technology, Stockholm, Sweden}
\email{post@taboege.de}
\author{Janneke H.\ Bolt}
\address[Janneke H.\ Bolt]{Department of Information and Computing Sciences, Utrecht University, The Netherlands \& Faculty of Science, Open University of the Netherlands, Heerlen, The Netherlands}
\email{j.h.bolt@uu.nl}
\author{Milan Studen\'{y}}
\address[Milan Studen\'{y}]{Institute of Information Theory and Automation of the CAS, Prague, Czech Republic}
\email{studeny@utia.cas.cz}
\subjclass[2020]{62B10 (primary); 06A15, 68T27, 68V05, 05B35 (secondary)}
\keywords{self-adhesivity, conditional independence, semi-graphoid, lattice (order theory), pseudo-closed element, boolean satisfiability}
\date{\today}
\begin{document}

\begin{abstract}
We introduce an algebraic concept of the frame for abstract
{\em conditional independence\/} (CI) models, together with basic
operations with respect to which such a frame should be closed: copying
and marginalization. Three standard examples of such frames are
(discrete) probabilistic CI structures, semi-graphoids and structural
semi-graphoids. We concentrate on those frames which are closed under the operation of set-theoretical intersection because, for these, the respective families of CI models are lattices. This allows one to apply the results from lattice theory and formal concept analysis to describe such families in terms of implications
among CI statements.

The central concept of this paper is that of {\em self-adhesivity\/} defined in algebraic terms, which is a combinatorial reflection of the self-adhesivity concept studied earlier in context of polymatroids and
information theory. The generalization also leads to a self-adhesivity operator defined on the meta-level of CI frames. We answer some of the questions related to this approach and raise other open questions.

The core of the paper is in computations. The combinatorial approach to computation might overcome some memory and space limitation of software packages based on polyhedral geometry, in particular, if SAT solvers are utilized.
We characterize some basic CI families over 4 variables in terms of canonical implications among CI statements. We apply our method in information-theoretical context to the task of entropic region demarcation over 5 variables.
\end{abstract}

\maketitle

\section{Introduction}
This paper is devoted to the concept of {\em self-adhesivity\/} in the context of abstract
{\em conditional independence\/} (CI) structures, which
generalize/approximate classic probabilistic CI structures \cite{Stu05}. The concepts of adhesivity and self-adhesivity have been introduced by Mat\'{u}\v{s} \cite{Mat07DM} in the context of {\em polymatroids}, which are special set functions playing a crucial role in combinatorial optimization \cite{Fuj05}. Mat\'{u}\v{s}'s motivation
came from information theory  \cite{Yeu08}, more specifically, from the area of information-theoretical inequalities.
The self-adhesivity concept for polymatroids is a kind of abstraction of the method used to derive the first
non-Shannon information inequality \cite{ZY98}, which method later became known in information-theoretical community
as the so-called {\em copy lemma} \cite{DFZ11}. The method led to the application of tools of polyhedral geometry
(and linear programming solvers) to derive new inequalities \cite{Csi20Kyb}.
\smallskip

In this paper we introduce the concept of self-adhesivity for abstract CI structures, which
is a combinatorial reflection of the self-adhesivity concept for polymatroids. The idea comes from \cite{Boe22} and has already been applied in the context of Gaussian CI structures \cite{Boe23}. Nonetheless, our paper puts
even more emphasis on the abstract view on (the families of) CI models and utilizes the results of {\em lattice theory} \cite{Bir95} for this purpose. More specifically, every finite lattice can alternatively be described  in terms of a closure operation, or in terms of (abstract) functional dependence relation \cite{Mat91TCS}. Thus, one can utilize the results from the theory of {\em formal concept analysis} \cite{GW99}. This leads to an analogy with mathematical logic and to the effort to describe families of CI models in an axiomatic way, using implications among CI statements.
\smallskip

We introduce the notion of an abstract frame for CI structures and give several specific examples of such CI frames. We also introduce relevant algebraic operations with respect to which these abstract CI frames should be closed: these are copying and marginalization.
From the point of view of our paper, the substantial algebraic operation is the set-theoretical
{\em intersection}, which ensures that, for a fixed (finite) set of variables, the family of respective CI models
forms a finite lattice. Then we introduce the concept of self-adhesivity, which is relative to a CI frame.
The self-adhesivity concept induces special {\em self-adhesivity operator} of the meta-level of CI frames. This operator
assigns a new CI frame to a starting CI frame and this new frame is a shrinkage of the original frame.
In case the starting CI frame is an outer approximation of the classic probabilistic CI frame, the application of the
self-adhesivity operator yields an even tighter outer approximation of the probabilistic CI frame.
\smallskip

This leads to a method to derive stronger valid probabilistic implications among CI statements on basis of previously known valid implications. The point of the story is that newly derived implications over $n$ variables are obtained on basis of known implications over a higher number of variables than $n$ through self-adhesivity.
\smallskip

The core of the paper are computations. Their goal was the description of several lattices of abstract CI models over a low number of variables. There are two methodologically different options: the lattices can
\begin{itemize}
\item either be described by listing meet-irreducible CI models,
\item or in terms of implications among CI statements.
\end{itemize}
Our approach also allows one to derive such CI implications directly, by means of performing the computations
on the combinatorial level of CI frames. To~contrast with Matúš's original concept of self-adhesivity for polymatroids, note that the dimension of the polymatroidal cone grows exponentially in $n$ and the number of CI statements grows even faster. Still, our computational methods
based on SAT solvers can deal with this increase much better than the linear programming solvers used in the polymatroid setup.
\smallskip

In the paper, a specific example of the application of our combinatorial
method is given. We~applied the method to the question of identification of those extreme rays of the polymatroidal cone over $5$ variables which are {\em not entropic},
which means that they do not have probabilistic representation. It~appears that most of the 1319 permutational types of these extreme rays lie outside the entropic region. More specifically, it follows from our computations that at most 154 of types of such extreme rays may be entropic. The proof is entirely computational and takes about 330 seconds to execute on the author's laptop.
Throughout the paper we also formulate remaining open mathematical questions related to our approach.
\medskip

The structure of the paper is as follows. In Section~\ref{sec.basic}, we define our concept of an abstract CI frame together with basic algebraic operations applied to CI models.
We also recall those particular concepts and results from lattice theory on which our algorithms are based
and which allow one to introduce frame-based closures of CI models.
In Section~\ref{sec.examples-frames} we present examples of such abstract CI frames.
The concept of self-adhesion relative to such an abstract CI frame is
introduced in Section~\ref{sec.adhesivity}, together with related basic observations and two other
relevant algebraic operations applied to CI models.
Section~\ref{sec.iteration} is devoted to the self-adhesivity operator and the idea of its iterating.
In Section~\ref{sec.transition} we review our tools for computations.
The results of our computational experiments are presented in Section~\ref{sec.catalogues} and their
application in information-theoretical context is explained in Section~\ref{sec.entro-region}.
Concluding remarks are presented in Section~\ref{sec.additional}. More detailed information about our computations, source code and resulting data files can be found on the website:
\begin{center}
  \url{https://cinet.link/data/selfadhe-lattices}\,\,.
\end{center}
\medskip
\pagebreak %

\section{Basic concepts}\label{sec.basic}
In this section we introduce both our notational conventions and terminology.
Recall that {\bf CI} is an abbreviation for {\em conditional independence}.
\smallskip

Given a %
set $X$, the symbol ${\cal P}(X):=\{S:S\subseteq X\}$
will denote its {\em power set\/}, that is, the set %
of all subsets of $X$. The upper case letters like $K,L,M,N,O$ will denote
finite sets of {\em variables\/} of interest.
In our intended probabilistic interpretation, their elements
correspond to random variables.
Additionally, two special notational conventions will be used.
Given a variable $i\in N$, the symbol $i$ will also denote the singleton set $\{i\}$
and, given two variable sets $K$ and $L$, the juxtaposition $KL$ will be a shortened notation
for their union $K\cup L$.
Given a bijection $\bij:N\to M$ and $L\subseteq N$ the restriction of $\bij$ to $L$
will be denoted by $\bij_{|L}$, the identical bijection of $L$ onto itself by $\id_{L}$.

\subsection{Abstract families of CI models and the probabilistic CI frame}\label{ssec.CI-families}

An %
elementary {\em CI statement\/} over a finite variable set
$N$ is the statement claiming that, for some $K\subseteq N$ and distinct $i,j\in N\setminus K$,
the variable $i$ is conditionally independent of the variable $j$ given %
$K$, traditionally denoted  by $i\ci j\,|\,K$. A converse conditional {\em dependence
statement\/} (= negation) is the claim that $i$ and $j$ are conditionally dependent given $K$,
traditionally denoted by $i\not\ci j\,|\,K$. Thus, a triplet $\langle i,j|K\rangle$
of pairwise disjoint subsets of $N$ can be interpreted either as an independence statement
or as a dependence statement.

Because, in this paper, we do not deal with ``mixed'' CI structures, involving both dependence and independence statements,
we accept a convention that our sketchy notation $ij|K$ for a triplet
$\langle i,j|K\rangle$ will be interpreted as the CI statement $i\ci j\,|\,K$.
Moreover, in our paper we consider only {\em symmetric\/} %
CI concepts in the sense that $i\ci j\,|\,K$ is equivalent to $j\ci i\,|\,K$.
Every such a ``symmetric'' CI statement can mathematically be described by an ordered pair $\,L\,|\,K\,$ of sets $L,K\subseteq N$, where $L\cap K=\emptyset$ and $|L|=2$; specifically we have $L=\{i,j\}$.
So, our second convention is that $ij|K$ means the same as $ji|K$.

Note that this way of introducing CI statements implicitly means that any CI statement over
a variable set $N$ is automatically also a CI statement over any variable set $M$ with $N\subseteq M$.

\begin{definition}[abstract CI statements, models, families, frames]\label{def.elem-models}~\rm \\
Given a finite variable set $N$, the symbol $\sta(N)$ will denote the
collection of all ordered pairs $\,L\,|\,K\,$, where $L,K\subseteq N$, $L\cap K=\emptyset$, and $|L|=2$.
\begin{itemize}[leftmargin=2em, rightmargin=1em]
\item The elements of\/ $\sta(N)$ will be interpreted as (CI) {\em statements\/} over $N$.
They will usually be denoted
by  explicit writing: $ij|K\in\sta(N)$.
\item A subset of\/ $\sta(N)$ will be interpreted as a (CI) {\em model\/} over $N$. Models will
be denoted using the calligraphic font: $\mod\subseteq\sta(N)$ or $\mod\in {\cal P}(\sta(N))$.
\item A class of (CI) models over $N$ involving the (largest possible) model $\sta(N)$ will
be a (CI) {\em family\/} over~$N$.
Thus, a family is a subset of the power set of\/ $\sta(N)$.
Families will be denoted using the fractal font and will have the variable sets as their arguments: $\sta(N)\in\ffam(N)\subseteq{\cal P}(\sta(N))$.
\end{itemize}
By a (CI) {\em frame\/} we will understand a mapping %
which assigns a (CI) family over $N$ to each finite (variable) set $N$.  Frames will be denoted using the fractal
font without an argument: mapping
$\ffam:\,N\mapsto \ffam(N)\in {\cal P}({\cal P}(\sta(N)))$. %
Given two CI frames $\ffam_{1}$ and $\ffam_{2}$, we write
$\ffam_{1}\subseteq\ffam_{2}$ to mean $\ffam_{1}(N)\subseteq\ffam_{2}(N)$ for any (finite) variable set $N$.
\end{definition}

The reader can recognize a clear hierarchy: a (CI) statement is an element of\/ $\sta(N)$, a model is an element of the power set of\/ $\sta(N)$ and a family is an element of the power set of the power set of\/ $\sta(N)$.\footnote{A frame corresponds to what was called a ``property'' in the PhD thesis \cite[\S\,2.2]{Boe22}.}

Note that $\sta(N)=\emptyset$ if $N$ is empty or a singleton. In particular,
only one (CI) model over $N$ exists if $|N|\leq 1$, namely the empty set/model.
Moreover, only one (CI) family over $N$ exists if $|N|\leq 1$ because of our explicit requirement that $\sta(N)\in\ffam(N)$ in the definition of a family.

The definition of a CI~model is relative to a variable set~$N$ and, strictly speaking, the model is a pair $(N, \mod)$ with $\mod \subseteq \sta(N)$. For example, the empty set might be considered as a CI~model over $\{a,b\}$ or over $\{a,b,c\}$. To simplify notation, we will usually rely on context to identify the variable set or mention it explicitly by means of the ``frame notation'' $\mod \in \ffam(N)$. If two variable sets are in inclusion $N \subseteq M$, then any CI~model $\mod$ over $N$ may be regarded as a model over $M$ as well. In the context of a frame, this ability to transition to larger variable sets is formalized by the operation of \emph{ascetic extension} defined below.
\smallskip

\begin{example}\rm\label{exa.least-base-set}
Put $N:=\{a,b,c,d\}$ and $\mod:=\{\, ab|\emptyset, bc|\emptyset\,\}\subseteq\sta(N)$.
Then $N^{\mod}=\{a,b,c\}$ and $\mod$ is a CI model over this strict subset of $N$.
Nonetheless, $\mod$ is not a CI model over $\{a,b\}$ nor over $\{b,c,e\}$, where $e$ is a variable outside $N$. On the other hand, $\mod$ is a model over any variable set $M$ with $\{a,b,c\}\subseteq M$.
\end{example}

We now give a prototypical example of an abstract CI frame, which is the frame of  discrete probabilistic CI structures.
A {\em discrete random vector\/} over a variable set $N$ is an indexed collection $\bxi=[\xi_{i}]_{i\in N}$ of random variables, each $\xi_{i}$ taking its values in an individual finite sample space $\sfX_{i}$,
which is necessarily non-empty then.
The joint sample space for $\bxi$ is $\sfX_{N}:=\prod_{i\in N} \sfX_{i}$.
Given $\bxi$ and $I\subseteq N$, the respective {\em marginal density\/} of\/ $\bxi$
can be defined as function $p_{I}:\sfX_{N}\to [0,1]$ such that the value $p_{I}(x)$ at
$x=[x_{i}]_{i\in N}\in\sfX_{N}$ is the probability of having $\xi_{i}=x_{i}$
for each $i\in I$.
Given $ij|K\in\sta(N)$, we have $i\ci j\,|\,K\,\,[\bxi]$ if
$$
\forall\, x\in\sfX_{N}\qquad
p_{ijK}(x)\cdot p_{K}(x) = p_{iK}(x)\cdot p_{jK}(x)\,.
$$
The %
CI model induced by $\bxi$ is the set
$\mod_{\ibxi} := \{\, ij|K\in\sta(N) \,:\ i\ci j\,|\,K\,\,[\bxi]\,\}$.
The family of {\em discrete probabilistic models\/} over $N$ is then
$$
\profam\, (N) \,:=\, \{\, \mod_{\ibxi} \,:\ \bxi ~~
\mbox{\rm is a discrete random vector over $N$}\,\}\,.
$$
Observe that $\sta(N)$ belongs to $\profam(N)$ because it is induced by a random vector with
completely independent components.
This defines the {\em discrete probabilistic\/} CI frame $\profam$.
\medskip

It will be convenient to deal with CI statements with more variables in place of the singletons $i$ and $j$. Although the treatment of (probabilistic) CI statements can be extended to
this more general case, in this paper the notation $I\ci J | K$ for disjoint subsets of $N$ will be considered only as an abbreviation.

\begin{definition}[global CI statement]\label{def.global-CI}~\rm \\[0.3ex]
Let $N$ be a variable set and $I,J,K\subseteq N$ pairwise disjoint subsets of it.
Given a CI model $\mod\subseteq\sta(N)$ over $N$, we will write $I\ci J\,|\,K\,\,[\mod]$
if
\begin{equation}
 \forall\, i\in I\quad \forall\, j\in J\quad\forall\, L : K\subseteq L\subseteq IJK\setminus ij
\qquad ij|L\in\mod\,.
\label{eq.glob-CI}
\end{equation}
We interpret this as a global CI statement encoded within the CI model $\mod$. In our examples and catalogues we also use a shorthand $[I,J|K]$ to denote the list of (elementary) CI statements described in \eqref{eq.glob-CI}.
\end{definition}

Note that, in the context of the probabilistic frame $\profam$, the abbreviating condition $I\ci J\,|\,K\,\,[\mod_{\ibxi}]$ is equivalent to the requirement that
$p_{IJK}(x)\cdot p_{K}(x) = p_{IK}(x)\cdot p_{JK}(x)$
holds for every $x\in\sfX_{N}$, which requirement is
interpreted as the (global) probabilistic CI statement $I\ci J\,|\,K\,\,[\bxi]$; see \cite[Lemma\,2.2]{Stu05}.
\medskip

The next step is to introduce five algebraic operations on abstract
CI models. The goal is to study various CI frames which are closed under such
operations.
\pagebreak %

\begin{definition}[copying, marginalization, intersection, duality, ascetic extension]\label{def.operations}~\rm
\begin{itemize}[leftmargin=2em, rightmargin=1em]
\item Any bijection %
$\bij : N\to M$ between
two variable sets $N$ and $M$ can naturally be extended to a bijection
between their power sets and to a bijection between $\sta(N)$ and $\sta(M)$:
put $\bij(ij|K)\,:=\, \bij(ij)|\bij (K)$ for every $ij|K\in \sta (N)$.

\item Any model $\mod\subseteq\sta(N)$ over $N$ is then transformed by
$\bij$ to a model over $M$: put
$$
\bij(\mod) :=\{\, \bij(ij|K)\,:\ ij|K\in\mod\,\}\subseteq\sta (M)\,.
$$
This is the {\em $\bij$-copy\/} of $\mod$.
We say that a CI frame $\ffam$ is closed under {\em copying\/} if, for every model $\mod\in\ffam(N)$
and every bijection $\bij : N\to M$, one has %
$\bij(\mod)\in\ffam(M)$.

\item Given a model $\mod\subseteq\sta(N)$ over $N$ and\, $M\subseteq N$, its {\em marginal\/}
for $M$, denoted by $\mod^{\downarrow M}$, is simply its restriction
to $\sta(M)$: put $\mod^{\downarrow M}:= \mod\cap \sta(M)$.
A CI frame $\ffam$ is closed under {\em marginalization\/} if, for every model $\mod\in\ffam(N)$
and every subset $M\subseteq N$, one has $\mod^{\downarrow M}\in\ffam(M)$.

\item Given two models $\mod,\altmod\subseteq\sta (N)$ over the same set $N$, their
(set-theoretical) {\em intersection\/} is the model $\mod\cap\altmod$, again a model over $N$.
A CI frame $\ffam$ will be called closed under {\em intersection\/} if, for
every two models $\mod,\altmod\in\ffam(N)$, one has $\mod\cap\altmod\in\ffam(N)$.

\item Given a variable set $N$, the duality map from $\sta(N)$ to $\sta(N)$
is as follows: $ij|K\in\sta(N)$ is assigned the pair $(ij|K)^{\dual}:=ij|L$,
where $L\,:=\,N\setminus (K\cup ij)$.
Given a model $\mod\subseteq\sta(N)$, the {\em dual model\/} $\mod^{\dual}$
is its image by this mapping.
A CI frame $\ffam$ is closed under {\em duality\/} if $\mod\in\ffam(N)$
implies $\mod^{\dual}\in\ffam(N)$.

\item Given a model $\mod\subseteq\sta(N)$ over $N$ and $N\subseteq M$, an {\em extension\/} of $\mod$ to $M$ is any model $\altmod\subseteq\sta (M)$ over $M$ such that $\altmod^{\downarrow N}=\mod$. A CI frame $\ffam$ is closed under {\em extension\/} if, for every model $\mod\in\ffam(N)$ and every $M$ with $M\supseteq N$, there exists at least one extension $\altmod$ of $\mod$ to $M$ with $\altmod\in\ffam(M)$.

\item Given $\mod\subseteq\sta(N)$ and $N\subseteq M$, an extension
$\altmod\subseteq\sta(M)$ of $\mod$ to $M$ will be called {\em ascetic\/} if
$\altmod\setminus\mod=\emptyset$. A CI frame $\ffam$ is closed under {\em ascetic extension\/} if, for every model $\mod\in\ffam(N)$ and every variable set $M$ with  $M\supseteq N$, one has
$\mod\in\ffam(M)$, that is, the ascetic extension of $\mod$ to $M$ belongs to $\ffam(M)$.

\end{itemize}
\end{definition}

Since $\sta(N)=\emptyset$ if $|N|\leq 1$, the concepts of copying and duality
make real sense only in case $|N|\geq 2$. On the other hand, to introduce
self-adhesion in Section~\ref{sec.adhesivity} one needs marginalization to sets $M$ with $|M|\leq 1$.

Note that if $N\subseteq M$ then every model $\mod$ over $N$ has an ascetic extension to $M$, namely $\mod$ itself, viewed as a CI model over $M$. Clearly, this is the least possible extension of $\mod$ to $M$ and, thus, unique.
The word ``ascetic" was chosen because, in comparison with other extending
algebraic operations, it adds nothing to $\mod$.
Clearly, a CI frame $\ffam$ is closed under ascetic extension iff $N\subseteq M$
implies $\ffam(N)\subseteq\ffam(M)$.
We leave it to the reader as an easy exercise to show that, if a CI frame $\ffam$ is closed under extension and intersection, then it is closed under ascetic extension iff $N\subseteq M$ implies $\sta(N)\in\ffam(M)$.

Two basic algebraic operations with abstract CI models are copying and marginalization.
Most of the considered CI frames in this paper are also closed under intersection, which implies that the respective families of models are lattices and one can introduce frame-based closures for CI models (see Section~\ref{ssec.frame-closure}).
\medskip

Our prototypical example of a CI frame is closed under all but one algebraic operations considered.

\begin{lemma}\label{lem.pr-frame}\rm
The frame $\profam$ is closed under copying, marginalization, intersection, and ascetic extension.
\end{lemma}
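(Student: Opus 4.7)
The plan is to verify each of the four closure properties by exhibiting a realizing random vector, with all difficulty concentrated in ascetic extension.

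For copying, given $\bxi=[\xi_i]_{i\in N}$ realizing $\mod_{\ibxi}$ and a bijection $\bij\colon N\to M$, I would define $\boeta$ over $M$ by relabeling: take the sample space of $\eta_{\bij(i)}$ to be that of $\xi_i$ and set $\eta_{\bij(i)}:=\xi_i$. Marginal densities transform equivariantly under $\bij$, so $ij|K\in\mod_{\ibxi}$ iff $\bij(ij|K)\in\mod_{\iboeta}$, giving $\mod_{\iboeta}=\bij(\mod_{\ibxi})$. For marginalization, the subvector $[\xi_i]_{i\in M}$ has the same marginal densities as $\bxi$ on every $S\subseteq M$, so any CI statement in $\sta(M)$ holds for the subvector iff it holds for $\bxi$, yielding $\mod_{\ibxi}^{\downarrow M}\in\profam(M)$.

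For intersection, given $\bxi$ and $\boeta$ realizing two models over $N$, I would place them on independent factors of a product probability space and define $\bzeta:=[(\xi_i,\eta_i)]_{i\in N}$ on the product sample spaces. The joint density and every marginal of it factorizes as the product of the corresponding densities of $\bxi$ and $\boeta$. Substituting this factorization into the defining equation of $\zeta_i\ci\zeta_j\,|\,\zeta_K$ and summing over $y$-slices (using that marginal conditional distributions sum to one) reduces the condition to the conjunction of $\xi_i\ci\xi_j\,|\,\xi_K$ and $\eta_i\ci\eta_j\,|\,\eta_K$, giving $\mod_{\ibzeta}=\mod_{\ibxi}\cap\mod_{\iboeta}$.

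The main obstacle is ascetic extension: given $\mod=\mod_{\ibxi}\in\profam(N)$ and $M\supseteq N$, I must realize $\mod$ viewed inside $\sta(M)$ by appending new random variables without introducing any new CI statements. My plan is to parameterize all joint distributions $p$ on $\sfX_M=\prod_{i\in M}\sfX_i$ (with $|\sfX_j|\geq 2$ for $j\in M\setminus N$) whose $N$-marginal equals $p_{\ibxi}$; this is an affine subspace of the probability simplex cut out by linear equations. For each CI statement $ij|K\in\sta(M)\setminus\sta(N)$, the subset of $p$'s satisfying it is cut out by a nontrivial polynomial equation, because the conditional kernel $p(x_{M\setminus N}\mid x_N)$ is free enough to realize any prescribed dependency across the boundary or among the new variables. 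A Zariski-generic $p$ in the affine subspace therefore avoids all finitely many such subvarieties at once, while the CI statements inside $\sta(N)$ are determined solely by the $N$-marginal and hence coincide with $\mod$. The random vector $\boeta$ with distribution $p$ then satisfies $\mod_{\iboeta}=\mod$ as subsets of $\sta(M)$, establishing ascetic closure.
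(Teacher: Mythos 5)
Your treatments of copying, marginalization, and intersection coincide with the paper's: relabel the components for copying, restrict the random vector for marginalization, and pair up independent realizations componentwise for intersection. (Your justification of the ``only if'' direction for intersection via summing over slices is sketchy --- the clean argument is additivity of conditional mutual information over independent factors --- but the construction is the right one and matches the paper.)

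The ascetic-extension argument, however, has a genuine gap. You fix the realization $\bxi$ of $\mod$ and consider only extensions $p$ on $\sfX_M$ whose $N$-marginal equals $p_{\ibxi}$; you then claim that for each $ij|K\in\sta(M)\setminus\sta(N)$ the distributions satisfying $i\ci j\,|\,K$ form a \emph{proper} subvariety of this affine family, so that a generic $p$ violates all of them. That properness claim --- ``the conditional kernel is free enough to realize any prescribed dependency'' --- is asserted without proof and is false for some realizations. Concretely, take $N=\{a,b\}$ with $\xi_a=\xi_b$ uniform binary, so $\mod_{\ibxi}=\emptyset$, and $M=\{a,b,c\}$: since $\xi_a$ is a function of $\xi_b$, \emph{every} extension with this $N$-marginal satisfies $a\ci c\,|\,b$, so the ``bad'' subvariety for $ac|b$ is the whole affine family and no generic point escapes it. (Constant components of $\bxi$ cause the same failure for statements $ic|K$.) The ascetic extension $\emptyset\in\profam(M)$ is still true, but it must be realized starting from a \emph{different} realization of $\emptyset$ over $N$, and your argument gives no way to choose one; the set of realizations of a fixed $\mod$ is not an irreducible variety on which one could again invoke genericity. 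The paper circumvents all of this: it first proves closure under extension (append independent components), then uses the already-established closure under intersection to reduce ascetic extension to the single universal fact that $\sta(N)\in\profam(M)$ for $N\subseteq M$, which needs only one explicit random vector over $M$ whose $N$-components are completely independent and which exhibits no further CI relations. To repair your proof you would either need to justify that every $\mod\in\profam(N)$ admits a realization free of all non-forced degeneracies and functional dependencies (a nontrivial claim), or adopt a reduction of the paper's kind.
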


\begin{proof}
Given a discrete random vector $\bxi=[\xi_{i}]_{i\in N}$ and a bijection $\bij : N\to M$, the random vector
$\boeta:=[\xi_{\,\bij_{-1}(j)}]_{j\in M}$ induces
the $\bij$-copy of\/ $\mod_{\ibxi}$: this is because $i\ci j\,|\,K\,\,[\bxi]$ if and only if
$\bij(i)\ci \bij(j)\,|\,\bij (K)\,\,[\boeta]$.
Given $\bxi=[\xi_{i}]_{i\in N}$ and $M\subseteq N$, the random sub-vector
$\bxi_{M}:=[\xi_{i}]_{i\in M}$ induces the marginal model $(\mod_{\ibxi})^{\downarrow M}$.

Let $\bxi^{1}=[\xi^{1}_{i}]_{i\in N}$ and $\bxi^{2}=[\xi^{2}_{i}]_{i\in N}$ be discrete random vectors
over $N$. Take a bijection $\bij : N\to M$ with $N\cap M=\emptyset$ and construct an extended
random vector $\bxi=[\xi_{i}]_{i\in NM}$
with stochastically independent sub-vectors $\bxi_{N}:=\bxi^{1}=[\xi^{1}_{i}]_{i\in N}$ and $\bxi_{M}:=[\xi^{2}_{\,\bij_{-1}(j)}]_{j\in M}$. Define
a random vector $\boeta=[\eta_{i}]_{i\in N}$ over $N$ where each
$\eta_{i}:=[\xi_{i},\xi_{\bij(i)}]$ is a composite random variable.
The construction allows one to observe that, for any $ij|K\in\sta(N)$, one has
$i\ci j\,|\,K\,\,[\boeta]$ if and only if both $i\ci j\,|\,K\,\,[\bxi^{1}]$ and $i\ci j\,|\,K\,\,[\bxi^{2}]$.
This gives $\mod_{\iboeta}=\mod_{\ibxi^{1}}\cap \mod_{\ibxi^{2}}$.

To show that $\profam$ is closed under extension consider $N\subseteq M$
and a discrete random vector $\bxi=[\xi_{i}]_{i\in N}$ and
extend it to a random vector $\boeta=[\eta_{i}]_{i\in M}$
with stochastically independent sub-vectors $\boeta_{N}:=\bxi$ and
$\eta_{i}$, $i\in M\setminus N$. Because we already know that $\profam$ is closed under intersection, to show that it is closed under ascetic extension
it remains to show that, if $N\subseteq M$, then $\sta(N)\in\profam(M)$.
This leads to a construction of a random vector $\boeta=[\eta_{i}]_{i\in M}$
with stochastically independent components $\eta_{i}$, $i\in N$,
while there is no other CI relation within $\boeta$. A specific construction of such a random vector is left to the reader.
\end{proof}

On the other hand, the CI frame $\profam$ is not closed under duality. Recall a counter-example by Mat\'{u}\v{s} from  \cite[end of \S\,5]{Mat95CPC}.

\begin{example}\rm\label{exa.non-dual}
Given $N:=\{a,b,c,d\}$ and $\mod:=\{\, ab|cd,\, ab|d,\, ab|c,\, cd|\emptyset\,\}$, one has
$\mod\in \profam\, (N)$ and $\mod^{\dual}=\{\, ab|\emptyset,\, ab|c,\, ab|d,\, cd|ab\,\}\not\in \profam\, (N)$. Indeed, a binary random vector uniformly distributed on
four special configurations
$$
(0,0,0,0)\quad (0,1,0,1)\quad (0,1,1,0)\quad (1,1,1,1)
$$
of values for $(x_{a},x_{b},x_{c},x_{d})$ implies the former fact.
The latter fact follows from the property (I:13) in \cite[\S\,V.B]{Stu21} saying that the implication
$$
[\,\, a\ci b\,|\,\emptyset ~~\&~~ a\ci b\,|\,c ~~\&~~
a\ci b\,|\,d ~~\&~~ c\ci d\,|\,ab\,\,] \quad\Rightarrow\quad a\ci b\,|\,cd\
$$
holds in the context of discrete probabilistic CI structures.
\end{example}

\noindent {\em Remark.} This is to elucidate possible alternative ways to formalize CI models. The classic way of understanding probabilistic CI structures from \cite{Pea88}, followed in \cite{Stu05}, was to define an abstract CI model over $N$ as the collection of global CI statements over $N$ valid with respect to a random vector $\bxi$ over $N$.
This approach leads to an alternative global mode, where
$\sta(N)$ is replaced by the collection $\gsta(N)$ of all triplets
$\langle I,J|K\rangle$ of pairwise disjoint subsets $I,J,K$ of $N$
and (global) CI models are subsets of $\gsta(N)$. Such an alternative approach is equivalent to the one discussed in our paper
through the relation from Definition~\ref{def.global-CI}.

Non-equivalent descriptions of probabilistic CI structures are those that additionally deal with {\em functional dependencies among random variables}, as the one from \cite{MatStu95CPC}. Given a discrete random vector $\bxi$ over $N$, $K\subseteq N$, and $i\in N\setminus K$, the respective functional dependence statement is the claim that $\xi_{i}$ is distribution-equivalent to a function
of $\bxi_{K}:=[\xi_{j}]_{j\in K}$. The point is that any such a functional dependence statement within $\bxi$ can be interpreted as a generalized CI statement $i\ci i\,|\,K\,\,[\bxi]$, namely that $i$ is conditionally independent of itself given $K$.
The way to formalize such wider CI structures is to replace\/ $\sta(N)$ by the collection $\staext(N)$ of ordered pairs $\,L\,|\,K\,$, where $L,K\subseteq N$, $L\cap K=\emptyset$, and $1\leq |L|\leq 2$. Any pair $i\,|\,K$ then corresponds to a functional dependence statement claiming that $i\in N\setminus K$ is ``determined'' by $K\subseteq N$. Subsets of $\staext(N)$ were called {\em augmented\/} CI models over $N$ \cite[\S\,II.B]{Stu21}, %
represented in the local/elementary mode. An equivalent version of this would be to replace $\staext(N)$ by the collection $\gstaext(N)$ of all triplets $\langle I,J|K\rangle$ of subsets $I,J,K$ of $N$. The subsets of\/ $\gstaext(N)$ could be viewed as  augmented CI models, represented in the global mode. However, we do not consider augmented CI~models with functional dependence statements in this paper.

\subsection{Tutorial on Galois connections and formal concept analysis}\label{ssec.tutorial}
This part contains an overview of particular results from lattice theory
and formal concept analysis we later apply in our context of abstract CI frames (see Section~\ref{ssec.frame-closure}) to establish frame-based implications of
CI models. %
We have included such a summary in our paper because a common reader need not
be familiar with these finer results, perhaps well-known in lattice-theoretical
community, but not in general. The reason is that the consistency of our
algorithms (\Cref{app.canon-basis}) follows just from these facts.
\smallskip

The main message of this tutorial to be passed towards the reader is that
any finite lattice, possibly with a huge number of elements, can (efficiently) be characterized in two methodologically different ways:
\begin{itemize}
\item in terms of irreducible elements (= an inner description in the sense that the characterization only depends on the lattice itself), %
\item in terms of implicational generators (= an outer description defined in case
the lattice is specifically embedded into a Boolean lattice).
\end{itemize}
Below we describe both approaches and recall relevant concepts from lattice theory \cite{Bir95} for this purpose.
\smallskip

Throughout this tutorial, $X$ will be a general set.
Facts from lattice theory hold for any such a set $X$. Finer concepts, which concern the implications between subsets of $X$, and results from formal concept analysis implicitly assume that $X$ is finite. We  are going to apply those observations in Section~\ref{ssec.frame-closure} in the case $X:=\sta(N)$ for some finite variable set $N$.
\medskip

Given a set $X$, a {\em Moore family\/} of subsets of $X$, alternatively called a {\em closure system\/} (for $X$), is a collection $\calF\subseteq {\cal P}(X)$ of subsets of $X$ which is closed under (arbitrary) set intersection and includes the set $X$ itself\/:
$$
{\cal D}\subseteq\calF ~\Rightarrow ~ \bigcap{\cal D}\in\calF\quad
\mbox{with a convention that\, $\bigcap{\cal D}=X$ for ${\cal D}=\emptyset$}.
$$
An elementary observation is that any Moore family $(\calF ,\subseteq)$,
equipped with the set inclusion relation, %
is a complete lattice \cite[Theorem\,2 in \S\,V.1]{Bir95} in which
the meet operation is intersection.\footnote{This is a universal construction of a complete lattice in the sense that any such a lattice is isomorphic to a Moore family and any finite lattice is isomorphic to a finite Moore family lattice. These conclusions can be derived from observations in \cite[\S\,V.9]{Bir95}.}
Hence, the following simple facts can be derived.

\begin{observation}\label{obs.model-lattice.1}\rm
If a CI frame $\ffam$ is closed under intersection then, for each finite $N$,
the family $(\ffam(N),\subseteq)$ is a finite lattice in which the meet operation
is intersection. %
If $\ffam$ is also closed under copying and $|N|=|M|$ then $(\ffam(N),\subseteq)$ and $(\ffam(M),\subseteq)$
are order-isomorphic and permutations of $N$ yield automorphisms of $(\ffam(N),\subseteq)$.
\end{observation}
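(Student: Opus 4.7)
The plan is to verify that $(\ffam(N),\subseteq)$ satisfies the hypotheses of the Moore family theorem already cited in the tutorial, and then transport that lattice structure along the bijections supplied by copying. There is no serious obstacle; the argument is really just unpacking the definitions, the only mildly subtle point being the passage from binary intersection closure to the arbitrary intersection closure required in the definition of a Moore family.

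First I would check that for each finite $N$ the collection $\ffam(N)\subseteq\Pow(\sta(N))$ is a Moore family of subsets of $\sta(N)$. The definition of a frame requires $\sta(N)\in\ffam(N)$, supplying the top element and in particular covering the empty-intersection convention $\bigcap\emptyset=\sta(N)$. Closure under binary intersection is the hypothesis; since $\Pow(\sta(N))$ is finite, a trivial induction upgrades this to closure under any nonempty finite intersection, which is all that is needed because $\ffam(N)$ itself is finite. Hence $\ffam(N)$ is a (finite) Moore family, and by the general fact cited immediately above the observation, $(\ffam(N),\subseteq)$ is a finite complete lattice whose meet is set-theoretical intersection. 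This disposes of the first assertion.

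For the second assertion, fix a bijection $\bij : N \to M$; such a bijection exists since $|N|=|M|$. By Definition~\ref{def.operations}, $\bij$ extends to a bijection between $\sta(N)$ and $\sta(M)$, and taking forward images yields an order-isomorphism between the Boolean lattices $(\Pow(\sta(N)),\subseteq)$ and $(\Pow(\sta(M)),\subseteq)$, whose inverse is the image map of $\bij^{-1}$. Closure of $\ffam$ under copying says $\bij(\mod)\in\ffam(M)$ whenever $\mod\in\ffam(N)$, and applying the same observation to $\bij^{-1}$ gives the reverse inclusion, so the image map restricts to a bijection $\ffam(N)\to\ffam(M)$. The restriction of an order-isomorphism to corresponding subsets remains an order-isomorphism, proving the isomorphism claim. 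The assertion about permutations is the special case $M=N$, in which case the restricted map is an automorphism of $(\ffam(N),\subseteq)$.
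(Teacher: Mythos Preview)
Your proof is correct and follows exactly the approach the paper takes; the paper's own proof is a one-line appeal to Definition~\ref{def.operations} and the Moore family facts recalled just before the observation, and you have simply spelled out those appeals in full.
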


\begin{proof}
By Definition~\ref{def.operations}, as $\ffam(N)$ is a Moore family, this is evident from what was said before.
\end{proof}

Let $(\latt,\preceq)$ be a (finite) lattice with the greatest element denoted by $\mbox{\bf 1}$ and the meet/infimum operator denoted  by $\inf$. Let $\prec$\, denote the strict order relation.
Given $\lambda,\tau\in\latt$, one says that $\tau$ {\em covers\/} $\lambda$ and writes
$\lambda\cover\tau$ if $\lambda\prec\tau$ and there is no $\kappa\in\latt$ with $\lambda\prec\kappa\prec\tau\/$.
A {\em coatom\/} of $\latt$ is a sub-maximal element in $\latt$, that is, $\tau\in\latt$ satisfying
$\tau\cover \mbox{\bf 1}$.
An element $\lambda\in\latt$ is called
(meet) {\em irreducible\/} if $\lambda\neq \inf\, \{\, \tau\in\latt\, :\ \lambda\prec\tau\,\}$.\footnote{Note that $\mbox{\bf 1}$ is not irreducible by this definition: it is the infimum of the empty set.}
A well-known fact is that the set $\irre$ of (meet) irreducible elements in a finite lattice $(\latt , \preceq)$ is the least set determining all elements of $\latt$ by means of the infimum operation in the following sense:
$$
\forall\, \kappa\in\latt
\quad \exists\, {\cal S}\subseteq\irre\qquad
\mbox{such that}~~ \kappa = \inf\, \{\, \tau\, :\ \tau\in {\cal S}\,\}\,.
$$
The coatoms are standard examples of meet irreducible elements in $\latt$.
In general, a lattice can have other irreducible elements.
A lattice $(\latt ,\preceq)$ is called {\em coatomistic\/} if every element of $\latt$ is
the infimum of a set of coatoms, which is another way of saying that there is no irreducible element in $\latt$ except the coatoms.

\begin{example}\rm\label{exa.irr-co}
Put $X:=\{x,y,z,w\}$ and consider a Moore family
$$
\calF:=\{\,\, \{x,y,z,w\},\{x,y,z\},\{x,y,w\},\{x,y\},\{x\} \,\,\}\,,
$$
of subsets of $X$, depicted in Figure \ref{lattice-irr-co} in the form of its Hasse diagram. The elements $\{x,y,z\}$, $\{x,y,w\}$ and $\{x\}$ of the lattice $(\calF,\subseteq)$ are then irreducible, but only the elements $\{x,y,z\}$ and $\{x,y,w\}$ are its coatoms. Thus, $(\calF,\subseteq)$ is a finite lattice which is not coatomistic.
\end{example}

\begin{figure}[t]
\centering
\includegraphics[scale=0.6]{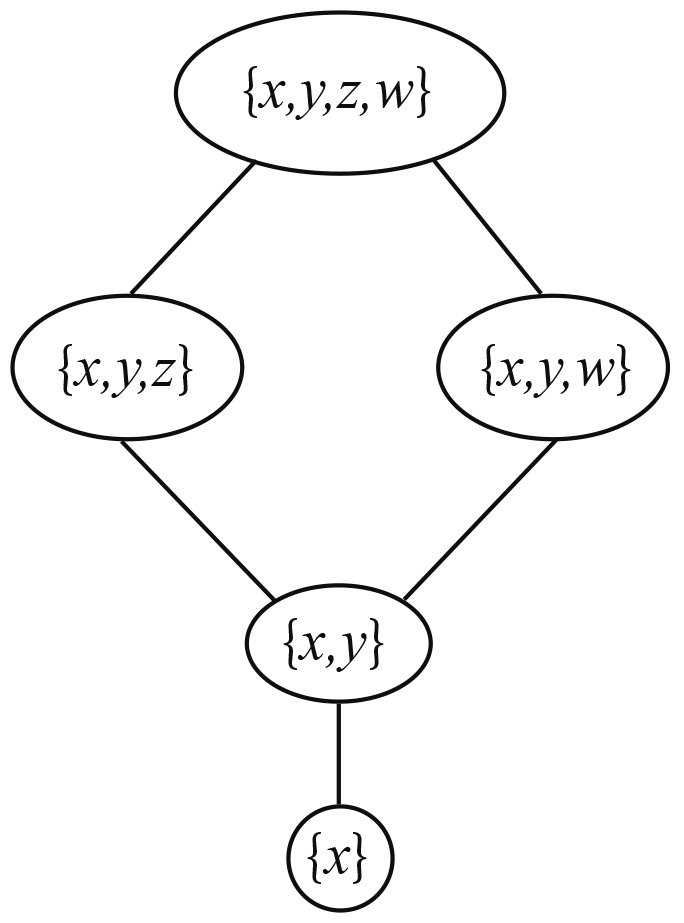}
\caption{A Moore family $\{\, \{x,y,z,w\},\{x,y,z\},\{x,y,w\},\{x,y\},\{x\} \,\}$.
}\label{lattice-irr-co}
\end{figure}

The first way to characterize a finite lattice $(\calF,\subseteq)$, where $\calF$ is a Moore family of subsets of $X$, is to give the complete list of irreducible elements in $\calF$. In our considered case they fall into equivalence classes:

\begin{observation}\label{obs.model-lattice.2}\rm
If a CI frame $\ffam$ is closed under copying and intersection then, for each finite variable set $N$, the meet irreducible sets in $(\ffam(N),\subseteq)$ (= irreducible models) fall into permutational equivalence classes
(= relative to permutations of $N$).
\end{observation}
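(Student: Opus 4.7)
The plan is to use the automorphism part of the preceding Observation~\ref{obs.model-lattice.1} as a black box and then invoke a general lattice-theoretic fact: any order automorphism of a finite lattice permutes the set of meet-irreducible elements. Once this is in place, the statement is just a re-expression of the fact that a group action preserving a distinguished subset partitions that subset into orbits.

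In more detail, I would first fix a finite variable set $N$ and a permutation $\bij : N \to N$. By the closure of $\ffam$ under copying, $\bij$ induces a map $\mod \mapsto \bij(\mod)$ on $\ffam(N)$, and by the second half of Observation~\ref{obs.model-lattice.1} this map is an automorphism of $(\ffam(N),\subseteq)$. Since the meet operation in this lattice is the set-theoretic intersection and $\bij$ is a bijection on $\sta(N)$, this is also immediate: $\bij(\mod\cap\altmod)=\bij(\mod)\cap\bij(\altmod)$.

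Next, I would argue that any order automorphism $\Phi$ of a finite lattice $(\latt,\preceq)$ sends meet-irreducibles to meet-irreducibles. This is routine: $\Phi$ sends the cover relation to itself and preserves infima, so if $\lambda=\inf\{\tau : \lambda\prec\tau\}$, the same identity holds with $\Phi(\lambda)$ in place of $\lambda$, and conversely. Hence $\lambda$ is irreducible iff $\Phi(\lambda)$ is irreducible. Applying this to the automorphism induced by $\bij$, we get that the set of meet-irreducible models in $\ffam(N)$ is invariant under the natural action of the symmetric group $\mathrm{Sym}(N)$.

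Finally, any group action on a set partitions the set into orbits. The orbits of $\mathrm{Sym}(N)$ acting on the irreducibles are precisely the permutational equivalence classes mentioned in the statement. No step is a real obstacle here; the only subtlety worth flagging is making sure that the induced map on $\ffam(N)$ really is a lattice automorphism, which however follows directly from the definitions of copying and intersection in Definition~\ref{def.operations} together with Observation~\ref{obs.model-lattice.1}.
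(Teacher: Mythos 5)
Your proposal is correct and follows essentially the same route as the paper: both invoke Observation~\ref{obs.model-lattice.1} to get that permutations of $N$ induce order automorphisms of $(\ffam(N),\subseteq)$ and then note that such automorphisms preserve meet-irreducibility, so the irreducibles decompose into orbits. You merely spell out the (routine) verification that automorphisms preserve irreducibility, which the paper states without detail.
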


\begin{proof}
By Observation~\ref{obs.model-lattice.1}, any permutation $\pi:N\to N$ defines an order automorphism of $(\ffam(N),\subseteq)$, the natural extension of $\pi$
to\/ $\sta(N)$. Such a transformation preserves the irreducibility of an element of\/ $\ffam(N)$.
\end{proof}

\begin{example}\rm\label{exa.pr-3-var}(lattice of probabilistic models over 3 variables)\\
Let us take $N:=\{a,b,c\}$, $X:=\sta(N)$, and put
$\calF:=\profam\, (N)$.
Either a human-made analysis or simple computations allows one to observe that the lattice $(\profam\, (N),\subseteq)$ has 22 elements in this case and they break into 10 permutational equivalence classes; for the result see \cite[Figure~5.6]{Stu05}.
There are 5 (meet) irreducible elements in
$\profam\, (\{a,b,c\})$  which fall down into 3 permutational~types:
\begin{itemize}
\item $\{\, ab|\emptyset,\, ac|\emptyset,\, bc|\emptyset\,\}$,
\item $\{\, ab|c,\, ac|b,\, bc|a\,\}$,
\item $\{\, ab|\emptyset,\, ab|c,\, ac|\emptyset,\, ac|b\,\}$ (with 2 additional permutational copies).
\end{itemize}
All these elements are coatoms; in particular, the lattice
is coatomistic.
\end{example}
\medskip

A kind of dual description of a complete lattice is based on the following concept.
A {\em closure operation\/} on subsets of $X$ is
a mapping $\cl :{\cal P}(X)\to {\cal P}(X)$ which is
\begin{itemize}
\item {\em extensive\/}: $Y\subseteq \cl\,(Y)$ for $Y\subseteq X$,
\item {\em isotone\/}: $Y\subseteq Z\subseteq X \Rightarrow \cl\,(Y)\subseteq \cl\,(Z)$, and
\item {\em idempotent\/}: $\cl\, (\cl\,(Y))=\cl\,(Y)$ for $Y\subseteq X$.
\end{itemize}
A set $Y\subseteq X$ is then called {\em closed with respect to $\cl$} if $Y=\cl\, (Y)$.
A basic facts are that the collection of subsets of $X$ closed
with respect to $\cl$ is a Moore family and any Moore family can be defined in this way \cite[Theorem\,1 in \S\,V.1]{Bir95}. Specifically, given a Moore family $\calF$ of subsets of $X$, the formula
$$
\cl_{\calF}(Y) \,:=\, \bigcap\, \{\,Z\subseteq X\,:\, Y\subseteq Z\in\calF\,\} \quad \mbox{\rm for} ~ Y\subseteq X\,,
$$
defines a closure operation on subsets of $X$ having $\calF$ as the collection of closed sets with respect to $\cl_{\calF}$; see \cite[Theorem\,1 in \S\,0.3]{GW99} for the proof of this simple observation.
Thus, for any $Y\subseteq X$, the closure $\cl_{\calF}(Y)$ of $Y$ relative to $\calF$ is the smallest element of $\calF$ containing $Y$.

\begin{example}\rm
Consider the Moore family $\calF$ from Example \ref{exa.irr-co}. Then, for instance, $\cl_{\calF}(\emptyset)=\{x\}=\cl_{\calF}(\{x\})$, $\cl_{\calF}(\{y\})=\{x,y\}$, and $\cl_{\calF}(\{x,z\})=\{x,y,z\}$.
\end{example}

The closure operation can also be interpreted as an {\em abstract functional dependence\/} relation on subsets of $X$, as explained in \cite{Mat91TCS}. This is a binary relation $\to$
on ${\cal P}(X)$ satisfying three axioms (for $Y,Z,W\subseteq X$):
\begin{itemize}
\item {\em reflexivity\/}: $Z\subseteq Y ~\Rightarrow~ Y\to Z$,
\item {\em transitivity\/}: $[\,Y\to Z ~\&~ Z\to W\,]  ~\Rightarrow~ Y\to W$,
\item {\em union\/}: $[\,Y\to Z ~\&~ Y\to W\,] ~\Rightarrow~ Y\to Z\cup W$.
\end{itemize}
In case of finite $X$, these axioms are together equivalent to the well-known {\em Armstrong axioms} for functional dependence relations in database theory.
Given a closure operation $\cl$ on subsets of a finite set $X$, define $Y\to Z$ as $Z\subseteq \cl\,(Y)$ and observe it is an abstract functional dependence relation on subsets of $X$.
Given a functional dependence relation $\to$, put
$\cl\,(Y):= \bigcup\, \{\,Z\subseteq X \,:\ Y\to Z\,\}$ for $Y\subseteq X$ and observe this defines a closure operation on subsets of $X$ inducing the original functional dependence relation $\to$.

\begin{example}\rm
Let us consider again the Moore family $\calF$ from Example \ref{exa.irr-co} and its induced closure operation $Y\mapsto\cl_{\calF}(Y)$ for $Y\subseteq X$. The respective
functional dependence relation is quite huge. It certainly
contains 16 implications saying that subsets of $X$ imply their closures:
\begin{eqnarray*}
\lefteqn{\hspace*{-14mm}\emptyset\to \{x\}, ~~ \{x\}\to \{x\}, ~~ \{y\}\to \{x,y\}, ~~ \{z\}\to \{x,y,z\}, ~~
 \{w\}\to \{x,y,w\},}\\
 ~\ldots && ~~\{y,z,w\}\to \{x,y,z,w\}, ~~ \{x,y,z,w\}\to \{x,y,z,w\}\,.
\end{eqnarray*}
Besides these, it also contains their natural consequences because of inclusion: for instance, it contains the implication $\{y,z,w\}\to \{x,w\}$ which follows from the inclusion $\{x,w\}\subseteq \cl_{\calF}(\{y,z,w\})$ (use reflexivity and transitivity for this).
\end{example}

\noindent {\em Remark.} To avoid possible terminological misunderstanding on the side of the reader note that functional dependencies within a random vector $\bxi$ mentioned in the remark concluding Section~\ref{ssec.CI-families} can be viewed as a part of an abstract {\em probabilistic functional dependence relation}.
Specifically, given a random vector $\bxi$ over a finite variable set $N$, the collection of pairs $K\to I$, where $K,I\subseteq N$, $K\cap I=\emptyset$,
and $I\ci I\,|\,K\,\,[\bxi]$ (augmented global CI statements)
is an abstract functional dependence relation on subsets of $X:=N$.
One of the results in \cite[Remark\,\,4]{Mat91TCS} is that every abstract functional dependence relation on subsets of a finite $N$ is probabilistic in the above sense. So, these notions are indeed related.

Nonetheless, in our paper we ignore probabilistic functional dependencies within $N$.
Instead, we deal with functional dependence on a meta-level of CI statements/models
over $N$: we consider abstract functional dependence relations on subsets of $X:=\sta(N)$,
where $N$ is a finite variable set. To emphasize the different context, we also talk about ``implications" between subsets of $X$, which are CI models over $N$.
\medskip

The second way to characterize a finite lattice $(\calF,\subseteq)$, where $\calF$ is a Moore family of subsets of $X$, is to give a suitable generator of the respective functional dependence relation $\to$.
Before~explaining the details we wish the reader to realize/observe that there is a straightforward relationship (=~correspondence) between the functional dependence relations $\to$ and Moore families $\calF$.\footnote{A one-to-one
assignment between Moore families and functional dependence relations has already been established through their shared closure operations. We now re-interpret this as a kind of duality relationship and avoid the closure operations.}
\smallskip

Specifically, any element $(Y,Z)$ of ${\cal P}(X)\times {\cal P}(X)$, where $X$ is a finite set, can be interpreted as a formal implication $Y\to Z$, meaning that the set $Y$ of {\em antecedents\/} implies (all elements of) the set $Z$ of {\em consequents}. One can introduce a
particular binary relation $\rel$ between such pairs $(Y,Z)$ and subsets $F$ of $X$:
we say that {\em $Y\to Z$ is valid for $F\subseteq X$} if
$Y\subseteq F$ implies $Z\subseteq F$. More technically:
\begin{eqnarray*}
\lefteqn{\hspace*{-12mm}\forall\, (Y\to Z)\in {\cal P}(X)\times {\cal P}(X)~~ \forall\, F\in {\cal P}(X)}\\[0.2ex]
&&(Y\to Z)\rel F \quad:=\quad
[\,\, Y\setminus F\neq\emptyset ~~\mbox{or}~~ Z\subseteq F\,\,].
\end{eqnarray*}
Next we introduce the so-called {\em Galois connections} between subsets of ${\cal P}(X)\times {\cal P}(X)$ and subsets of ${\cal P}(X)$:
\begin{eqnarray*}
&\!\!{\dv G}\subseteq {\cal P}(X)\times {\cal P}(X) \,\mapsto\,
{\dv G}^{\triangleright}:=\{\,F\in {\cal P}(X):\,
\forall\, (Y\to Z)\in {\dv G}~\, (Y\to Z)\rel F\,\},& \\
&\calF\subseteq {\cal P}(X) \,\mapsto\,
{\calF}^{\triangleleft}:=\{\,(Y\to Z)\in {\cal P}(X)\times {\cal P}(X):\,
\forall\, F\in\calF ~\, (Y\to Z)\rel F\,\}\,.&
\end{eqnarray*}

Well-known classic results in lattice theory on Galois connections from
\cite[\S\,V.7]{Bir95}, completed by particular observations from
\cite[(unnumbered) Theorem]{Mat91TCS}, say the following in case of a finite set~$X$:
\begin{itemize}[leftmargin=2em, rightmargin=1em]
\item the mapping ${\dv G}\mapsto {\dv G}^{\triangleright\triangleleft}$ is a closure operation on subsets of the set ${\cal P}(X)\times {\cal P}(X)$ and ${\dv G}^{\triangleright\triangleleft}$ is the smallest functional dependence relation (on subsets of $X$) containing~${\dv G}$,

\item the mapping $\calF\mapsto {\calF}^{\triangleleft\triangleright}$ is a closure operation on subsets of ${\cal P}(X)$ and ${\calF}^{\triangleleft\triangleright}$ is the
smallest Moore family (of subsets of $X$) containing $\calF$,

\item a system  $\calF\subseteq {\cal P}(X)$ is a Moore family if and only if
$[\,\exists\,{\dv G}\subseteq {\cal P}(X)\times {\cal P}(X) \,:\ \calF= {\dv G}^{\triangleright}\, ]$.
\end{itemize}
We say that ${\dv G}\subseteq {\cal P}(X)\times {\cal P}(X)$ is an {\em implicational generator\/}
of a Moore family $\calF$ if $\calF= {\dv G}^{\triangleright}$. Elementary facts on Galois
connections allow one to observe that an equivalent condition is ${\calF}^{\triangleleft}={\dv G}^{\triangleright\triangleleft}$. Hence, ${\dv G}$ is an implicational generator
of a Moore family $\calF$ if and only if
both ${\dv G}\subseteq {\calF}^{\triangleleft}$ (soundness) and $|{\dv G}^{\triangleright}|=|{\calF}|$ (completeness).

\begin{example}\rm\label{exa.impli-gen}
Consider $X:=\{x,y,z,w\}$ and put
$$
{\dv G}:=
\{~ \emptyset\to\{x\}, ~\{z\}\to\{y\}, ~\{x,w\}\to\{x,y,w\} ~\}\,.
$$
Then ${\dv G}^{\triangleright}$ equals to the family $\calF$ from Example \ref{exa.irr-co}. Hence, ${\dv G}$ is one of several possible implicational generators of $\calF$, a minimal one in sense of inclusion (= a non-redundant one).
\end{example}

To introduce a prominent implicational generator one needs specific concepts
from the theory of formal concept analysis; see \cite[Chapter\, 3]{GO16} or \cite[\S\,1.3.2]{Vil21}.\footnote{In general, there is no unique smallest implicational generator is sense of inclusion, that is, a non-redundant generator.}

\begin{definition}[pseudo-closed sets, canonical basis]\label{def.pseudo-closed}~\rm \\[0.3ex]
Given a finite set $X$, a subset $Q\subseteq X$ is {\em pseudo-closed\/} relative to a Moore family $\calF\subseteq {\cal P}(X)$ if
\begin{itemize}
\item $Q\not\in\calF$ (= $Q$ is not closed),
\item $\forall\, R\subset Q ~\mbox{\rm pseudo-closed}~~ \cl_{\calF}(R)\subseteq Q$\quad (then $\cl_{\calF}(R)\subset Q$ because
$Q\not\in\calF$).
\end{itemize}
The class of pseudo-closed subsets relative to $\calF$ will be denoted by $\wp(\calF)$.
The {\em canonical implicational basis\/} for a Moore family $\calF$ is as follows:
$$
{\dv G}_{\calF} ~:=~ \{\, (\,Q\to \cl_{\calF}(Q)\setminus Q\,)\, :\ Q\in \wp(\calF)\,\}\,\subseteq\, {\cal P}(X)\times {\cal P}(X)\,.
$$
\end{definition}
\smallskip

The recursive definition of pseudo-closedness of a set $Q$ is well-formed as it depends only on the previously defined pseudo-closedness of strict subsets of~$Q$. The canonical implicational basis, alternatively
named Guigues--Duquenne
basis, was introduced already in the 1980s \cite{GD86}. We have also implemented an algorithm to compute the canonical basis ${\dv G}_{\cal F}$ for a given Moore family ${\cal F}$; its idea is described in \Cref{app.canon-basis}.
\smallskip

Every inclusion-minimal non-closed subset, that is, $Q\in ({\cal P}(X)\setminus\calF)_{\min}$, is always pseudo-closed
since all proper subsets of $Q$ are closed. The converse implication does not hold as the following example shows.

\begin{example}\rm\label{exa.impli-basis}
Consider again the Moore family $\calF$ from Example \ref{exa.irr-co} and its closure operation $\cl_{\calF}$.
Then one has $\wp(\calF)=\{\,\, \emptyset,\, \{x,z\},\, \{x,w\} \,\,\}$. Note in this context that none of the singleton subsets of $X$ is pseudo-closed because of $\cl_{\calF}(\emptyset)=\{x\}$. The empty set $\emptyset$ is the only inclusion-minimal non-closed set while $\{x,z\}$ and $\{x,w\}$ are additional pseudo-closed sets.
Specifically, one has
$$
{\dv G}_{\calF} ~=~\{~ \emptyset\to \{x\},\,
\{x,z\}\to \{y\},\, \{x,w\}\to \{y\} ~\}\,.
$$
Nevertheless, further simplification is possible.
Because of\/ $\emptyset\to \{x\}$, the implication
$\{x,w\}\to \{y\}$ is equivalent to $\{w\}\to \{y\}$ (use
axioms of functional dependence relations to observe that).
To summarize: the simplest implicational generator for the
family $\calF$ seems to be the following list:
$$
\{~ \emptyset\to \{x\},\,
\{z\}\to \{y\},\, \{w\}\to \{y\} ~\}\,.
$$
\end{example}

The widely-known result in formal concept analysis says that $({\dv G}_{\calF})^{\triangleright}=\calF$ for any Moore family $\calF\subseteq {\cal P}(X)$; see \cite[Theorem\,7 in \S\,3.2]{GO16} for a relatively
short proof. In fact, ${\dv G}_{\calF}$ is
a non-redundant implicational generator of $\calF$ in sense that the removal of any of its elements leads to a non-generator.
\smallskip

Let us introduce further specific terminology concerning formal implications.
Given a Moore family $\calF\subseteq {\cal P}(X)$, a {\em valid implication for $\calF$} is $(Y\to Z)\in {\cal F}^{\triangleleft}$.
An implication $Y\to Z$ is {\em perfect\/} (for $\calF$)
if $Y\not\in\calF$ but every strict subset of $Y$ is closed, that is, $Y\in ({\cal P}(X)\setminus\calF)_{\min}$,
and $Z=\cl_{\calF}(Y)\setminus Y$. The implication is {\em canonical\/} if it belongs to the canonical implicational basis~${\dv G}_{\calF}$.

It is immediate that every perfect implication is canonical but the converse does not hold as shown in Example~\ref{exa.impli-basis}, where $\{x,z\}\to \{y\}$ is a canonical implication which is not perfect.

\begin{definition}[implicatively perfect Moore family]\label{def.perfect}~\rm \\[0.3ex]
We say that a Moore family $\calF\subseteq {\cal P}(X)$ is {\em implicatively perfect\/} if the set of perfect implications for $\calF$ is an implicational generator of $\calF$.
\end{definition}

It makes no problem to deduce from non-redundancy of the canonical basis
that an equivalent condition is that every pseudo-closed set is an inclusion-minimal non-closed set, that is, an element of set collection $({\cal P}(X)\setminus\calF)_{\min}$. Let us illustrate the above defined notions by an example.

\begin{example}\rm\label{exa.pr-var-b} (probabilistic models over 3 variables revisited)\\
Consider again the situation from Example~\ref{exa.pr-3-var}: $N:=\{a,b,c\}$, $X:=\sta(N)$, and $\calF:=\profam\, (N)$.
There are six pseudo-closed sets relative to $\calF$ belonging to one permutational type: the set $\{\, ab|\emptyset,\, ac|b \,\}$
and five of its permutational copies. Each of these sets is minimal in ${\cal P}(X)\setminus\calF$; thus, $\calF$ is implicatively
perfect. The canonical basis ${\dv G}_{\calF}$ consists of permutational copies of the implication
$$
\{\, ab|\emptyset,\, ac|b \,\} \to \{\, ac|\emptyset,\, ab|c \,\}\,.
$$
The characterization of this particular Moore family $\calF$ in terms of its canonical implicational basis is equivalent to a later definition of a semi-graphoid over $\{a,b,c\}$ in Section~\ref{sec.sem-frame}.
\end{example}

\noindent {\em Remark.} The concept from Definition~\ref{def.perfect} can be viewed as a counter-part of the one of a coatomistic lattice in the context of implicational description of the lattice. Realize that the Galois connection $\calF\mapsto\calF^{\triangleleft}$, for Moore families $\calF$ of subsets of a finite set $X$, can be viewed as an anti-isomorphism between such Moore families and abstract functional dependence relations ${\dv G}$,
whose inverse is the Galois connection ${\dv G}\mapsto{\dv G}^{\triangleright}$. We can interpret this map as a kind of self-inverse duality transformation between these mathematical structures, which both can be viewed as
alternative (ways of) descriptions of finite lattices.
If one prefers the perspective of a Moore family then the simplest lattice is the
coatomistic one but if one prefers the perspective of implicational description then
the simplest lattice is the one described by perfect implications.

\subsection{Frame-based closure of a CI model and implications between CI models}\label{ssec.frame-closure}
Let us come back to the situation in Section~\ref{ssec.CI-families} and consider an abstract CI frame $\ffam$ closed under intersection. In this case, for every finite variable set $N$, the family of CI models $(\ffam(N),\subseteq)$ is a Moore family of subsets of $X:=\sta(N)$. Therefore, the closure operation $\cl_{\ffam(N)}$ on subsets of\/ $\sta(N)$, respectively the collection of
{\em valid implications for\/ $\ffam(N)$}, characterizes this family.
Observe that, for $\mod,\altmod\subseteq\sta(N)$, the validity of the implication $\mod\to\altmod$ for $\ffam(N)$ reduces to the condition
$$
\forall\, \modext\in\ffam(N)\quad \mod\subseteq\modext ~\mbox{implies}~  \altmod\subseteq\modext\,.
$$
In the sequel, we intend to switch freely between the description of a CI frame $\ffam$ in terms of the families $\ffam(N)$ and in terms of the corresponding closure operations.
\smallskip

The fact that the elements of the canonical basis in Example~\ref{exa.pr-var-b} fall down into permutational types  is not a surprise because we have the following.

\begin{observation}\label{obs.model-lattice.3}\rm
If a CI frame $\ffam$ is closed under copying and intersection then, for each set $N$, the valid implications $\mod\to\altmod$ for the Moore family $\calF:=\ffam(N)$ and the elements of the canonical implicational basis for $\ffam(N)$ fall into permutational equivalence classes.
\end{observation}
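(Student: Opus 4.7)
The plan is to show that any permutation $\pi : N \to N$ acts as an automorphism on all the relevant structures, so that its action on $\sta(N)$ (and hence on $\mathcal{P}(\sta(N))$) permutes valid implications among valid implications and canonical implications among canonical implications. All the work is bookkeeping on top of \Cref{obs.model-lattice.1}.

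First I would recall that, since $\ffam$ is closed under copying and intersection, \Cref{obs.model-lattice.1} gives that every permutation $\pi$ of $N$ extends naturally to an order automorphism $\Pi$ of the lattice $(\ffam(N),\subseteq)$, via the definition $\Pi(\mod) := \{\pi(ij|K) : ij|K \in \mod\}$. Because $\Pi$ is a bijection $\sta(N)\to\sta(N)$ that restricts to a bijection $\ffam(N)\to\ffam(N)$ preserving inclusion, it commutes with the closure operation: for every $\mod \subseteq \sta(N)$,
\[
\Pi(\cl_{\ffam(N)}(\mod)) \;=\; \cl_{\ffam(N)}(\Pi(\mod)).
\]
This follows directly from the explicit description $\cl_{\calF}(\mod) = \bigcap\{\modext \in \ffam(N) : \mod \subseteq \modext\}$, since $\Pi$ maps the family being intersected for $\mod$ bijectively onto the family being intersected for $\Pi(\mod)$.

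Next I would use this to handle valid implications. An implication $\mod \to \altmod$ is valid for $\ffam(N)$ iff $\altmod \subseteq \cl_{\ffam(N)}(\mod)$. Applying $\Pi$ to both sides and using the commutation just established shows
\[
\altmod \subseteq \cl_{\ffam(N)}(\mod) \;\Longleftrightarrow\; \Pi(\altmod) \subseteq \cl_{\ffam(N)}(\Pi(\mod)),
\]
so $\mod \to \altmod$ is valid for $\ffam(N)$ iff $\Pi(\mod) \to \Pi(\altmod)$ is. Hence valid implications fall into permutational equivalence classes under the diagonal action of permutations of $N$.

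For the canonical basis, I would prove by induction on $|Q|$ that $Q \subseteq \sta(N)$ is pseudo-closed iff $\Pi(Q)$ is pseudo-closed (the recursion in \Cref{def.pseudo-closed} only refers to set inclusion, non-closedness, and the closure operator, all of which are preserved by $\Pi$). Given this, if $Q \in \wp(\ffam(N))$ then $\Pi(Q) \in \wp(\ffam(N))$, and the corresponding canonical implications are related by
\[
\Pi\bigl(Q \to \cl_{\calF}(Q)\setminus Q\bigr) \;=\; \Pi(Q) \to \cl_{\calF}(\Pi(Q))\setminus \Pi(Q),
\]
again using that $\Pi$ commutes with closure and respects set differences. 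Therefore $\Pi$ permutes ${\dv G}_{\ffam(N)}$ bijectively, which is exactly the statement that the canonical implications fall into permutational equivalence classes. The only minor subtlety in the whole argument is the inductive verification that pseudo-closedness is $\Pi$-invariant, but once the commutation of $\Pi$ with $\cl_{\calF}$ is in hand this is a one-line induction.
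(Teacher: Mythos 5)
Your proposal is correct and follows essentially the same route as the paper: a permutation of $N$ induces a simultaneous order automorphism of $({\cal P}(\sta(N)),\subseteq)$ and $(\ffam(N),\subseteq)$ that commutes with $\cl_{\ffam(N)}$, hence preserves validity of implications. The paper's proof is terser and leaves the pseudo-closedness invariance implicit, whereas you spell out the (correct) induction on $|Q|$; this is a welcome elaboration but not a different argument.
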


\begin{proof}
The arguments are analogous to those in Observation~\ref{obs.model-lattice.2}.
Any \mbox{permutation} $\pi:N\to N$ defines a simultaneous order automorphism of the posets $(\ffam(N),\subseteq)$ and $({\cal P}(\sta(N)),\subseteq)$. Because such an order automorphism of $(\calF,\subseteq)$ commutes with the closure operation $\cl_{\calF}$, it preserves the validity of implications.
\end{proof}

We typically wish to have the frame-based closure
stable relative to an enlargement of the basic set.

\begin{observation}\label{obs.model-lattice.4}\rm
Let $\ffam$ be a CI frame closed under marginalization, intersection, and ascetic extension.
Given $\mod\subseteq\sta(N)$ and $N\subseteq M$ one has $\cl_{\ffam(N)}(\mod)=\cl_{\ffam(M)}(\mod)$.
\end{observation}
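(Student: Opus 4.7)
The plan is to establish the two inclusions separately, using ascetic extension for one direction and marginalization for the other, with closure under intersection serving only to guarantee that the closure operations $\cl_{\ffam(N)}$ and $\cl_{\ffam(M)}$ are well-defined in the first place. Note that $\mod \subseteq \sta(N) \subseteq \sta(M)$, so both closures make sense and each is characterised as the inclusion-smallest element of the respective Moore family containing $\mod$.

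For the inclusion $\cl_{\ffam(M)}(\mod) \subseteq \cl_{\ffam(N)}(\mod)$, I would invoke ascetic extension: since $N \subseteq M$ this hypothesis gives $\ffam(N) \subseteq \ffam(M)$, so in particular $\cl_{\ffam(N)}(\mod)$ is itself a member of $\ffam(M)$. Because it contains $\mod$, the minimality property of $\cl_{\ffam(M)}(\mod)$ within $\ffam(M)$ yields the desired inclusion.

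For the reverse inclusion, I would use marginalization. The model $\cl_{\ffam(M)}(\mod)$ lies in $\ffam(M)$, hence its marginal $\cl_{\ffam(M)}(\mod)^{\downarrow N} = \cl_{\ffam(M)}(\mod) \cap \sta(N)$ belongs to $\ffam(N)$. Since $\mod \subseteq \sta(N)$ and $\mod \subseteq \cl_{\ffam(M)}(\mod)$, this marginal still contains $\mod$, so by minimality of $\cl_{\ffam(N)}(\mod)$ inside $\ffam(N)$ we obtain $\cl_{\ffam(N)}(\mod) \subseteq \cl_{\ffam(M)}(\mod)^{\downarrow N} \subseteq \cl_{\ffam(M)}(\mod)$, which completes the argument.

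There is no real obstacle here; the statement is essentially a bookkeeping check that the three closure hypotheses on $\ffam$ interact correctly with the canonical definition $\cl_{\calF}(Y) = \bigcap \{\, Z : Y \subseteq Z \in \calF \,\}$. The only point requiring mild care is observing that $\mod \subseteq \sta(N)$ is preserved under intersecting with $\sta(N)$, which is what makes the marginalization step see the same generating set $\mod$ on both sides.
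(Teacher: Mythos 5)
Your proof is correct and follows essentially the same route as the paper's: marginalization gives $\cl_{\ffam(N)}(\mod)\subseteq\cl_{\ffam(M)}(\mod)$ and ascetic extension gives the reverse inclusion via $\cl_{\ffam(N)}(\mod)\in\ffam(M)$. The paper states this more tersely, but the content and the role of each hypothesis (including intersection only guaranteeing that the closures are well-defined) are identical.
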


\begin{proof}
The fact that $\ffam$ is closed under marginalization  implies $\cl_{\ffam(N)}(\mod)\subseteq\cl_{\ffam(M)}(\mod)$.
Since $\ffam(N)\subseteq\ffam(M)$ (ascetic extension), one has $\cl_{\ffam(N)}(\mod)\in\ffam(M)$, which yields the equality.
\end{proof}

Therefore, the assumption on a CI frame $\ffam$ being closed under ascetic extension makes the following definition of the (frame-based) $\ffam$-closure of a CI model $\mod$ consistent, because the closure does not depend on the ``context", by which we mean a variable set $M$
such that $\mod\subseteq\sta(M)$.

\begin{definition}[frame-based closure of a CI model]\label{def.short-closure}~\rm \\[0.3ex]
Consider a CI frame $\ffam$ closed under marginalization,
intersection, and ascetic extension.
Given a CI model $\mod\subseteq\sta(N)$ over a variable set $N$, its closure of relative to $\ffam(N)$ will be denoted by $\ffam(\mod):=\cl_{\ffam(N)}(\mod)$.
We will call it the {\em $\ffam$-closure\/} of the model $\mod$.
\end{definition}

Indeed, by Observation~\ref{obs.model-lattice.4}, if $\mod\subseteq\sta(N)$ then
both $\mod$ and $\ffam(\mod)$ are models over $N^{\mod}:=\bigcup_{ij|K\in\mod} ijK$. Also, if the assumptions of Definition~\ref{def.short-closure} are fulfilled and $\mod\subseteq\sta(N)$, $\altmod\subseteq\sta(M)$ are given then the condition $\mod\to\altmod$ for $\ffam(NM)$ enforces $N^{\altmod}\subseteq N^{\mod}$ and its verification
reduces to showing that $\altmod\subseteq \ffam(\mod)$.

\section{Examples of abstract CI frames}\label{sec.examples-frames}
This is an overview of CI frames; some of these were topics of our computational experiments.

\subsection{Semi-graphoids}\label{sec.sem-frame}
The concept of a semi-graphoid was introduced by Pearl \cite{Pea88}
with the aim to formalize fundamental properties of probabilistic CI
pinpointed earlier by Dawid \cite{Daw79JRSS}. Here we give an equivalent
definition from \cite[\S\,5]{Mat92conf} adapted to our situation.
A {\em semi-graphoid\/} over a variable set $N$ is a subset $\mod$ of\/ $\sta(N)$ satisfying
$$
 ij|K,\, i\ell |jK\in\mod  ~\Leftrightarrow~  i\ell|K,\, ij|\ell K\in\mod\,.
$$
Then we put
$$
\semgr\, (N) \,:=\, \{\, \mod\subseteq\sta(N) \,:\ \mbox{\rm $\mod$ is a semi-graphoid over $N$}\,\}\,,
$$
which defines the {\em semi-graphoidal\/} CI frame $\semgr$.

\begin{lemma}\label{lem.sg-frame}\rm
$\semgr$ is closed under copying, marginalization, intersection, duality, and ascetic extension.
\end{lemma}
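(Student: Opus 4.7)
The plan is to verify each of the five closure properties separately, exploiting the purely syntactic form of the semi-graphoid biconditional
$$ij|K,\, i\ell|jK \in \mod \iff i\ell|K,\, ij|\ell K \in \mod.$$
For \emph{copying}, I would observe that a bijection $\bij \colon N \to M$ preserves the incidence structure between triplets $ij|K$ and $i\ell|jK$, so the axiom for $\bij(\mod)$ at $\bij(i),\bij(j),\bij(\ell),\bij(K)$ is just the axiom for $\mod$ at $i,j,\ell,K$. For \emph{marginalization}, I would note that if $i,j,\ell \in M$ and $K \subseteq M \setminus ij\ell$, then all four triplets in the axiom lie in $\sta(M)$, so membership in $\mod^{\downarrow M} = \mod \cap \sta(M)$ coincides with membership in $\mod$, and the axiom transfers directly. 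For \emph{intersection}, the biconditional ``both triplets lie in $\mod$'' is preserved under intersection because it is a conjunction: $\{ij|K, i\ell|jK\} \subseteq \mod_1 \cap \mod_2$ iff $\{ij|K, i\ell|jK\} \subseteq \mod_k$ for both $k$, and similarly for the right-hand side.

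For \emph{ascetic extension} from $N$ to $M \supseteq N$, I would split the verification at a triplet $ij|K, i\ell|jK$ over $M$ into two cases. If $ij\ell K \subseteq N$, the axiom reduces to the one already satisfied by $\mod \in \semgr(N)$. Otherwise some variable of $ij\ell K$ lies in $M \setminus N$, so every triplet in the axiom fails to lie in $\sta(N) \supseteq \mod$, making both conjunctions vacuously false.

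The step that needs the most care is \emph{duality}. Here I would fix distinct $i,j,\ell \in N$ and a conditioning set $K \subseteq N \setminus ij\ell$, and set $L := N \setminus ij\ell K$, so that $N$ decomposes as the disjoint union $ij\ell K L$. A direct computation of the dual using the definition $(ij|K)^\dual = ij|(N \setminus ij \setminus K)$ then yields
\begin{align*}
  ij|K \in \mod^\dual &\iff ij|\ell L \in \mod, & i\ell|jK \in \mod^\dual &\iff i\ell|L \in \mod, \\
  i\ell|K \in \mod^\dual &\iff i\ell|jL \in \mod, & ij|\ell K \in \mod^\dual &\iff ij|L \in \mod.
\end{align*}
Thus the semi-graphoid axiom for $\mod^\dual$ at $(i,j,\ell,K)$ is equivalent to the semi-graphoid axiom for $\mod$ at $(i,\ell,j,L)$, i.e.\ the same axiom with the roles of $j$ and $\ell$ swapped. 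Since the biconditional is symmetric in the two pairs of triplets, that instance of the axiom holds in $\mod$, completing the argument.

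I expect duality to be the only delicate part, as it requires tracking how the complement operation permutes the four triplets appearing in the axiom; the remaining four properties are essentially formal.
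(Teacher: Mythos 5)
Your proof is correct and follows the same route as the paper, which simply states that the lemma ``basically follows from the definition of a semi-graphoid''; you have spelled out the routine verification, and in particular the duality computation (the only nontrivial case, where the axiom at $(i,j,\ell,K)$ for $\mod^{\dual}$ becomes the axiom at $(i,\ell,j,L)$ for $\mod$) is carried out correctly. One tiny imprecision: in the ascetic-extension case it is not true that \emph{every} triplet fails to lie in $\sta(N)$ when some variable of $ij\ell K$ is outside $N$ (e.g.\ $ij|K$ may still lie in $\sta(N)$ when only $\ell\notin N$), but each of the two conjunctions always contains at least one such triplet, so both sides are indeed false and the argument stands.
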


\begin{proof}
This basically follows from the above definition of a semi-graphoid.
\end{proof}

Well-known facts are that $\profam\subseteq\semgr$ and $\semgr(N)=\profam(N)$ if $|N|\leq 3$.
The simplest example showing that $\semgr(N)\neq\profam(N)$ in case $|N|\geq 4$ is as follows.

\begin{example}\rm\label{exa.sg-not-pr}
Given $N:=\{a,b,c,d\}$ and $\mod:=\{\, ab|c,\, ac|d,\, ad|b\,\}\in\semgr(N)$, one has $\mod\not\in\profam(N)$ by property (E:1) in \cite[\S\,V.A]{Stu21} saying that the equivalence
$$
[\,\, a\ci b\,|\,c ~~\&~~ a\ci c\,|\,d ~~\&~~
a\ci d\,|\,b \,\,] ~\Leftrightarrow~
[\,\, a\ci b\,|\,d ~~\&~~ a\ci c\,|\,b ~~\&~~
a\ci d\,|\,c \,\,]
$$
holds in the context of discrete probabilistic CI structures.
\end{example}

\subsection{Structural semi-graphoids}\label{sec.struc-sem}
The concept of a structural semi-graphoid was introduced in \cite{Stu94IJGS}
as a semi-graphoid induced in a particular way by certain subsets of ${\dv Z}^{{\cal P}(N)}$.
The original complicated definition was shown in \cite[Corollary\,5.3]{Stu05} to be equivalent
to a much simpler condition that underlies our definition below.

\begin{definition}[supermodular function]\label{def.supermodular}~\rm \\[0.3ex]
Given a set function $m:{\cal P}(N)\to {\dv R}$ and arbitrary subsets $I,J,K\subseteq N$,
we are going to use the following special notation for a particular value:
$$
\Delta m\, \langle I,J|K\rangle ~:=~ m(IJK)+m(K)-m(IK)-m(JK)\,.
$$
The {\em supermodular difference expression\/} for $ij|K\in\sta(N)$  is a special case:
$$
\Delta m (ij|K) := \Delta m\, \langle i,j|K\rangle = m(ijK)+m(K)-m(iK)-m(jK)\,.
$$
The function $m$ is called {\em supermodular\/} if $\Delta m (ij|K)\geq 0$
for any $ij|K\in\sta(N)$, which requirement is equivalent to %
$\Delta m\, \langle I,J|K\rangle\geq 0$ for pairwise disjoint subsets $I,J,K\subseteq N$,
or to the well-known standard supermodularity condition
$$
m (A\cup B)+m(A\cap B)\geq m(A) + m(B)\qquad \mbox{for any $A,B\subseteq N$.}
$$
\end{definition}

The model $\mod^{m} := \{\, ij|K\in\sta(N) \,:\ \Delta m (ij|K)=0\,\}$,
where $m:{\cal P}(N)\to {\dv R}$
is supermodular,
is the structural semi-graphoid induced by $m$.
The class of {\em structural semi-graphoids\/} over $N$ is
$$
\strum\, (N) \,:=\, \{\, \mod^{m} \,:\ ~\mbox{\rm $m$ is a supermodular function on ${\cal P}(N)$}\,\}\,.
$$
This defines the structural CI frame $\strum$.

\begin{lemma}\label{lem.st-frame}\rm
$\strum$ is closed under copying, marginalization, intersection, duality, and ascetic extension.
\end{lemma}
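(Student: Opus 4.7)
For each of the five operations, the plan is to exhibit a supermodular witness on the appropriate powerset whose induced structural semi-graphoid equals the required target model. The unifying tool is the additivity of the elementary difference expression $\Delta m$ in $m$: combined with non-negativity of $\Delta m(ij|K)$ for supermodular $m$, it implies that the zero set of a sum of supermodular functions is the intersection of their zero sets.

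For \emph{copying} by $\bij:N\to M$, I push $m$ forward via $m'(A):=m(\bij^{-1}(A))$; this is supermodular and $\Delta m'(\bij(ij|K))=\Delta m(ij|K)$, so $\mod^{m'}=\bij(\mod^m)$. For \emph{marginalization}, the restriction of $m$ to ${\cal P}(M)$ is supermodular with identical elementary differences on triplets inside $\sta(M)$, so $\mod^{m|_{{\cal P}(M)}}=(\mod^m)^{\downarrow M}$. For \emph{intersection}, $m_1+m_2$ is supermodular and $\Delta(m_1+m_2)(ij|K)=\Delta m_1(ij|K)+\Delta m_2(ij|K)$ vanishes iff both summands do, yielding $\mod^{m_1+m_2}=\mod^{m_1}\cap\mod^{m_2}$.

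For \emph{duality}, I would use the complemented function $m^{\dual}(A):=m(N\setminus A)$; applying the standard complementation trick to $m(A\cup B)+m(A\cap B)\geq m(A)+m(B)$ shows $m^{\dual}$ is supermodular. The key bookkeeping identity is $\Delta m^{\dual}(ij|K)=\Delta m(ij|L)$ with $L:=N\setminus(K\cup ij)$, which follows from the four equalities $N\setminus ijK=L$, $N\setminus K=ijL$, $N\setminus iK=jL$ and $N\setminus jK=iL$. Together with the involutivity of the duality map on $\sta(N)$, this yields $\mod^{m^{\dual}}=(\mod^m)^{\dual}$.

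The main obstacle is \emph{ascetic extension}. Given $\mod=\mod^m\in\strum(N)$ and $N\subseteq M$, I need a supermodular $\tilde m:{\cal P}(M)\to{\dv R}$ with $\mod^{\tilde m}=\mod$ as a subset of $\sta(M)$. The naive pull-back $A\mapsto m(A\cap N)$ is supermodular but its $\Delta$ vanishes on too many new triplets in $\sta(M)\setminus\sta(N)$, so it witnesses a strictly larger model. The fix is to add a supermodular correction $h$ on ${\cal P}(M)$ whose elementary differences vanish exactly when $ijK\subseteq N$ and are strictly positive otherwise; a concrete candidate is $h(A):=|A|^3-|A\cap N|^3$, for which a short computation gives $\Delta h(ij|K)=6|K\setminus N|$ when $\{i,j\}\subseteq N$ and $\Delta h(ij|K)=6(|K|+1)$ otherwise. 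Setting $\tilde m(A):=m(A\cap N)+h(A)$, the intersection argument from the third step gives $\mod^{\tilde m}=\mod$, so $\mod\in\strum(M)$. Producing this single ``separator'' $h$ is the only step that does not reduce to a routine transfer of the given supermodular witness.
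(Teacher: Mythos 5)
Your treatment of copying, marginalization, intersection, and duality coincides with the paper's proof: the same transported/summed supermodular witnesses and the same identities $\Delta (m\circ\bij_{-1})(\bij(ij|K))=\Delta m(ij|K)$, $\Delta(m^1+m^2)=\Delta m^1+\Delta m^2$, and $\Delta(m\circ\iota)((ij|K)^{\dual})=\Delta m(ij|K)$. Where you genuinely diverge is ascetic extension. The paper first proves closure under \emph{extension} via the pull-back $T\mapsto r(T\cap N)$ and then invokes the general reduction (stated as an exercise after Definition~\ref{def.operations}): a frame closed under extension and intersection is closed under ascetic extension iff $\sta(N)\in\ffam(M)$ for $N\subseteq M$; that last fact is obtained from $\profam\subseteq\strum$, with the concrete construction left to the reader. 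You instead build a single explicit witness $\tilde m(A)=m(A\cap N)+h(A)$ with the separator $h(A)=|A|^3-|A\cap N|^3$, whose elementary differences I checked do compute to $6\,|K\setminus N|$ when $ij\subseteq N$ and $6(|K|+1)$ otherwise, so $\Delta h(ij|K)=0$ exactly when $ijK\subseteq N$; combined with your intersection argument this pins $\mod^{\tilde m}$ down to the ascetic extension of $\mod$. Your route is self-contained and purely combinatorial --- it does not pass through probabilistic representability or through the extension-plus-intersection reduction --- and it simultaneously furnishes the ``specific construction'' of a supermodular function inducing $\sta(N)$ inside $\strum(M)$ that the paper omits (take $m\equiv 0$). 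The paper's route is shorter on the page but outsources two steps (the exercise and the construction) to the reader. Both are correct.
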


\begin{proof}
Given a bijection $\bij : N\to M$ and $m:{\cal P}(N)\to {\dv R}$, one has
$\Delta m (ij|K) = \Delta (m\circ\bij_{-1})\, \bij(ij|K)$ for any $ij|K\in\sta(N)$,
where $\bij$ is naturally extended to the power set of $N$ and $\sta(N)$.
Hence, provided that $m$ is a supermodular function on ${\cal P}(N)$, the composition $m\circ \bij_{-1}$ is a supermodular function on ${\cal P}(M)$ and the $\bij$-copy of $\mod^{m}$ is $\mod^{m\circ\bij_{-1}}$.

If $M\subseteq N$ then the restriction of a supermodular function on ${\cal P}(N)$ to ${\cal P}(M)$ is supermodular, which implies the claim about the marginalization.

Given two set functions $m^{1}:{\cal P}(N)\to {\dv R}$ and $m^{2}:{\cal P}(N)\to {\dv R}$, one has $\Delta (m^{1}+m^{2}) (ij|K) = \Delta m^{1} (ij|K) + \Delta m^{2} (ij|K)$ for any $ij|K\in\sta(N)$, which implies that
if $m^{1}$ and $m^{2}$ are supermodular then
$m:=m^{1}+m^{2}$ is supermodular. The non-negativity of values $\Delta m^{1} (ij|K)$ and $\Delta m^{2} (ij|K)$ allows one to deduce that $\mod^{m}=\mod^{m^{1}}\cap \mod^{m^{2}}$.

Consider a self-inverse mapping $\iota:{\cal P}(N)\to {\cal P}(N)$ defined by $\iota(S):=N\setminus S$ for any $S\subseteq N$. The formula $\Delta m (ij|K) = \Delta (m\circ\iota) (ij|K)^{\dual}$ valid for
any $m:{\cal P}(N)\to {\dv R}$ and $ij|K\in\sta(N)$, implies that
$m$ is supermodular if and only if $m\circ\iota$ is supermodular and that
$(\mod^{m})^{\dual}=\mod^{m\circ\iota}$.

To show that $\strum$ is closed under extension, realize that, if $N\subseteq M$, then any supermodular
function $r:{\cal P}(N)\to {\dv R}$ can be extended to a supermodular function $m$ on ${\cal P}(M)$ by the formula
$m(T):=r(T\cap N)$ for any $T\subseteq M$. Since $\strum$ is closed under intersection, to show it is closed
under ascetic extension it remains to observe that $\sta(N)\in\strum(M)$. This follows from $\profam\subseteq\strum$;
a specific construction is left to the reader.
\end{proof}

Basic facts are that $\profam\subseteq\strum\subseteq\semgr$ and $\semgr(N)=\strum(N)=\profam(N)$ if $|N|\leq 3$;
for $\profam\subseteq\strum$ see \cite[\S\,2.3.4]{Stu05}. The strict inclusion $\strum(N)\subset\semgr(N)$
in case $|N|=4$ is evidenced by the model $\mod:=\{\, ab|c,\, ac|d,\, ad|b\,\}$ %
from Example~\ref{exa.sg-not-pr}, because the property (E:1) mentioned there
holds for structural semi-graphoids.

The strict inclusion $\profam(N)\subset\strum(N)$ for $|N|=4$ is evidenced by
$\mod^{\dual}:=\{\, ab|\emptyset,\, ab|c,\, ab|d,\, cd|ab\,\}$, which is the dual model
in Example~\ref{exa.non-dual}, induced by the next supermodular function $m$ on $N=\{a,b,c,d\}$:
$$
m(S) ~:=~
\left\{
\begin{array}{cl}
12 & \mbox{if $S=abcd$,} \\
7 & \mbox{if $S=acd$ or $S=bcd$,}\\
6 & \mbox{if $S=abc$ or $S=abd$,} \\
3 & \mbox{if $S\in\{\, ac,ad,bc,bd,cd\,\}$,}\\
0 & \mbox{otherwise,}
\end{array}
\right.
~~\qquad
\mbox{for $S\subseteq N$.}
$$

\noindent {\em Remarks.}~ This is to remind alternative views on structural semi-graphoids.
\begin{enumerate}[leftmargin=2em, rightmargin=1em]
\item \label{remarks:2.1} A set function $h:{\cal P}(N)\to {\dv R}$ is {\em submodular\/} if it fulfils the converse
inequalities to the supermodular ones\,:
$$
h (A\cup B)+ h(A\cap B)\leq h(A) + h(B)\qquad \mbox{for any $A,B\subseteq N$,}
$$
which is another way of saying that $-h$ is supermodular. The rank function of a {\em polymatroid\/} over $N$ is a submodular function $h$ on ${\cal P}(N)$ which satisfies $h(\emptyset)=0$ and
is non-decreasing: $h(A)\leq h(B)$ if $A\subseteq B\subseteq N$.

One can equivalently introduce structural semi-graphoids as CI models induced by submodular functions or by polymatroids in an analogous way.
Clearly, it is just a matter of taste and convention whether the supermodular view or the submodular view is chosen.
\item The concept of a structural semi-graphoid can be viewed as a reduced version %
of the concept of a {\em semi-matroid\/} from \cite[\S\,2]{MatStu95CPC}. The difference is that
semi-matroids also involve {\em functional dependencies\/} within $N$
(see the remark concluding Section~\ref{ssec.CI-families}).
But otherwise semi-matroids are defined analogously: they are induced by polymatroids
through vanishing of the respective submodular difference expressions.

The polymatroidal attitude allows one to emphasize the links to {\em matroid theory\/}
\cite{Oxl11} because polymatroids generalize matroids. Specifically, a polymatroidal rank function $h$
on ${\cal P}(N)$ corresponds to a {\em matroid\/} over $N$ if and only if it takes values in ${\dv Z}$ and
additionally satisfies $h(I)\leq |I|$ for any $I\subseteq N$.
\end{enumerate}

\subsection{Graphoids, compositional semi-graphoids, and compositional graphoids}\label{ssec.graphoids}
Researchers interested in graphical statistical models introduced various other abstract CI frames \cite{Pea88,SL14};
here we follow the line from \cite{LM07}.
A~{\em graphoid\/} over a variable set $N$
is a semi-graphoid $\mod\subseteq\sta(N)$ satisfying the implication
$$
ij|\ell K,\, i\ell|jK\in\mod ~~\Rightarrow~~ ij|K,\, i\ell|K\in\mod \,.
$$
A semi-graphoid is called {\em compositional\/} if it satisfies the reversed implication, and a {\em compositional graphoid\/} is a semi-graphoid satisfying both implications.
It is evident that $\mod$ is a graphoid if and only if its dual model
$\mod^{\dual}$ is a compositional semi-graphoid.

The above definitions, thus, delimit the frame $\gra$ of
graphoids, the frame $\gra^{\dual}$ of compositional
semi-graphoids, and the frame $\cogr$ of compositional
graphoids. By definition, $\cogr\subseteq \gra,\gra^{\dual}\subseteq\semgr$. Our notation reflects the fact that
$\gra^{\dual}$ can be interpreted as the dual frame to $\gra$
because $\mod\in\gra^{\dual}(N)\,\Leftrightarrow\,\mod^{\dual}\in\gra(N)$ for any variable set $N$ and $\mod\subseteq\sta(N)$.
It is known that $\gra\setminus \profam\neq\emptyset$ and $\profam\setminus\gra\neq\emptyset$.

\subsection{Graphical CI frames}\label{ssec.graphical}
A quite special CI frame is given by undirected graphical models,
which were called {\em separation graphoids} in \cite{LM07}. Given an undirected
graph $G$ over a variable set $N$ (= a graph having $N$ as the node set), we say that $ij|K\in\sta(N)$
is represented in $G$ (after the separation criterion) and
write $i\ci j\,|\,K\,\,[G]$ if every path in $G$ between $i$ and $j$ contains a node in $K$. Then $G$ induces a CI model
$$
\mod_{G} ~:=~ \{\, ij|K\, :\
i\ci j\,|\,K\,\,[G]\,\}\subseteq\sta(N)\,.
$$
The family of {\em undirected graphical models} is then
$$
\ungr(N) ~:=~ \{\,
\mod_{G}\,:\
\mbox{$G$ is an undirected graph over $N$}
\,\}\,.
$$
This defines a CI frame $\ungr$, which is closed under copying and marginalization, but not under intersection. Well-known facts are that $\ungr\subseteq\profam\subseteq\semgr$ \cite{GP93}. The $\ungr$-frame
has a finite ``axiomatization''~\cite{Pea88}.

A popular class of CI models studied in probabilistic reasoning are {\em directed acyclic graphical models}.
These are induced by directed graphs without directed cycles through the respective $d$-separation criterion  \cite{Pea88}.
The corresponding CI frame is closed under copying but not under marginalization \cite[Example\,3.1]{Stu05}.

\subsection{Other examples of abstract CI frames}\label{ssec.other-frames}
Another CI frame, which is not discussed in this paper, consists of {\em regular
Gaussian CI models\/}~\cite{Boe22}, that is, CI structures induced by
(continuous) Gaussian probability distributions. This frame is closed under copying, marginalization, and duality
but not under intersection.

The frame of {\em gaussoids} was introduced in \cite{LM07} and examined in \cite{BDKS19}. This frame approximates the frame
of (regular) Gaussian CI models. A recent paper \cite{Boe23} deals with the structural self-adhesivity concept for gaussoids and applies
these techniques to Gaussian random variables.

Further options were pinpointed by one of the reviewers. One can consider the {\em matroidal frame},
a subframe of $\strum$ consisting of CI models induced by rank functions of matroids.
The other option is the {\em linear frame}, consisting of CI models induced by linear
polymatroids. Yet another option is the {\em almost entropic frame}, discussed in more
details in Section~\ref{sec.entro-region}.

\section{Adhesion and self-adhesion}\label{sec.adhesivity}
The concept of an adhesion/self-adhesion was introduced by Mat\'{u}\v{s} \cite{Mat07DM}
in the context of polymatroids. As pinpointed by Csirmaz \cite{Csi20Kyb}, the
adhesion concept is a special case of the concept of an amalgam for polymatroids.
Note that amalgams were introduced and studied much earlier in matroid theory \cite{Oxl11}.

Mat\'{u}\v{s}'s concept of self-adhesion can be viewed as an abstraction of the procedure used to derive
non-Shannon information inequalities \cite{ZY98}, which is commonly named the ``{\em copy lemma}" method \cite{DFZ11}
in information-theoretical community. More details on the connection to this field
are discussed in Section~\ref{sec.entro-region}.

It has been observed recently in \cite{Boe22} that these concepts can also be defined
within a simpler discrete environment of CI structures. We follow this link here.
An auxiliary concept of consonant models is needed.

\begin{definition}[consonant models, amalgam, adhesion]\label{def.adhesion}~\rm
\begin{itemize}[leftmargin=2em, rightmargin=1em]
\item
Let $N,M$ be two variable sets, possibly intersecting.
Models $\mod\subseteq\sta(N)$ and $\altmod\subseteq\sta(M)$ will be called {\em consonant\/}
if $\mod^{\downarrow N\cap M}=\altmod^{\downarrow N\cap M}$.

\item
Given consonant models $\mod\subseteq\sta(N)$ and $\altmod\subseteq\sta(M)$, their {\em amalgam\/}
is any ``extended'' model $\modext\subseteq\sta(NM)$ over $NM$ such that $\modext^{\downarrow N}=\mod$ and
$\modext^{\downarrow M}=\altmod$.

\item
An {\em adhesion\/} of\/ $\mod\subseteq\sta(N)$ and $\altmod\subseteq\sta(M)$
is their amalgam $\modext\subseteq\sta(NM)$ that, moreover, satisfies
$(N\setminus M)\ci (M\setminus N)\,|\,(N\cap M)\,\,[\modext]$ (see Definition~\ref{def.global-CI}).
\end{itemize}
\end{definition}

Note that any pair of models $\mod\subseteq\sta(N)$ and
$\altmod\subseteq\sta(M)$ is consonant if $|N\cap M|\leq 1$.
The~consonancy is a necessary condition for the existence of an amalgam.
Observe that any pair of consonant models has an adhesion: take the union
of $\mod$, $\altmod$, and the subset of\/ $\sta(NM)$
delimiting the global CI statement $(N\setminus M)\ci (M\setminus N)\,|\,(N\cap M)$.

Unlike the algebraic operations from Definition~\ref{def.operations},
amalgams and adhesions are not uniquely determined by their input models $\mod$ and $\altmod$.
On the other hand, (set-theoretical) intersection of two amalgams/adhesions
of $\mod$ and $\altmod$ is their amalgam/adhesion; thus,
their least amalgam/adhesion exists.
\medskip

\begin{example}\label{exa.cons-am-ad}\rm
Take $N:=\{a,b,c,d\}$, $M:=\{c,d,e\}$, and put $\mod:=\{\,ab|\emptyset,\,cd|\emptyset\,\}$, $\altmod:=\{\,cd|\emptyset,\, cd|e\,\}$, and $\altmod^{\prime}:=\{\,cd|e\,\}$. Then $\mod$ and $\altmod$ are consonant models,
while $\mod$ and $\altmod^{\prime}$ are not.
The least amalgam of $\mod$ and $\altmod$ is $\{\, ab|\emptyset,\, cd|\emptyset,cd|e\,\}$ and
the least adhesion of $\mod$ and $\altmod$ is
$$
\{\, ab|\emptyset,\, cd|\emptyset,\, cd|e,\,\,\, ae|cd,\, ae|bcd,\, be|cd,\, be|acd\,\}\,,
$$
where the last 4 statements represent the global CI statement $ab\ci e\,|\,cd$.
\end{example}

\subsection{Self-adhesion}\label{ssec.self-adhesion}
The self-adhesion of a CI model $\mod$ is nothing but
the adhesion of $\mod$ with its own $\bij$-copy, where $\bij$ is an appropriate bijection.
The derived concept of self-adhesivity is relative to a %
CI frame.

\begin{definition}[self-adhesion]\label{def.frame-self-adhesion}~\rm
\begin{itemize}[leftmargin=2em, rightmargin=1em]
\item Let $\ffam$ be a CI frame closed under copying and marginalization.
Given $\mod\in\ffam(N)$ and $L\subseteq N$ we say that $\mod$ is {\em self-adhesive at $L$\/}
relative to $\ffam$ if, for every bijection $\bij:N\to M$ such that $L=N\cap M$ and $\bij_{\,|L}$ is the identity,
an adhesion of $\mod$ and $\bij(\mod)$ exists which is in the family $\ffam(NM)$:
\begin{eqnarray*}
\lefteqn{\hspace*{0mm}\forall\, \bij:N\to M ~\mbox{bijection with $L=N\cap M$ and $\bij_{\,|L}=\id_{L}$~}
~~ \exists\,\, \modext\in\ffam(NM)}\\ %
 && \mbox{such that~ $\modext^{\downarrow N}=\mod$,~ $\modext^{\downarrow M}=\bij(\mod)$ and
$(N\setminus M)\ci (M\setminus N)\,|\,L\,\,[\modext]$}\,.
\end{eqnarray*}
We say that $\mod\in\ffam(N)$ is {\em self-adhesive\/} relative to $\ffam$ if it is
self-adhesive at every $L\subseteq N$.

\item
The symbol $\ffam^{\sa}(N)$ will denote the family of {\em self-adhesive models\/} $\mod\in\ffam(N)$  relative to $\ffam$. If~$\ffam(N)=\ffam^{\sa}(N)$ then we say that $\ffam(N)$ is self-adhesive.
The frame $\ffam$ is {\em self-adhesive\/} if this holds for every finite $N$.

\item
Given $\mod\subseteq\sta(N)$ and $L\subseteq N$, if there exists the least model $\altmod\in\ffam(N)$
such that $\mod\subseteq\altmod$ and $\altmod$ is self-adhesive at $L$ relative to
$\ffam$ then we will call $\altmod$ the {\em self-adhesive closure\/} of $\mod$
at $L$ (relative to $\ffam$) and denote it by $\ffam^{\sa}(\mod|L)$.
\end{itemize}
\end{definition}

The assumptions on $\ffam$ (copying and marginalization) are needed to define the concept of self-adhesion. Clearly, a model $\mod\in\ffam(N)$ is self-adhesive at $L\subseteq N$ relative to $\ffam$ if and only if  $\ffam^{\sa}(\mod|L)=\mod$.
\medskip

\noindent {\em Remarks.}~ We have a few additional related remarks.
\begin{enumerate}[leftmargin=2em, rightmargin=1em]
\item \label{remarks:3.1} One can equivalently introduce a self-adhesive model $\mod$ at a set $L$ by a formally weaker
condition:
\begin{eqnarray*}
\lefteqn{\hspace*{-5mm}\exists\, \altbij:N\to O ~\mbox{bijection with $L=N\cap O ~\mbox{and}~ \altbij_{\,|L}=\id_{L}$}
~~~ \exists\,\, \widetilde{\modext}\in\ffam(NO) \,:}\\ %
 && \widetilde{\modext}^{\downarrow N}=\mod ~\&~~ \widetilde{\modext}^{\downarrow O}=\altbij(\mod) ~\&~~ (N\setminus O)\ci (O\setminus N)\,|\,L\,\,[\widetilde{\modext}]\,.
\end{eqnarray*}
This follows from the assumption that $\ffam$ is closed under copying. Having fixed $\altbij:N\to O$, for any considered bijection $\bij:N\to M$ we define $\bijelse:NO\to NM$ by $\bijelse(i):=i$ for $i\in N$ and $\bijelse(i):=\bij(\altbij_{-1}(i))$ for $i\in O\setminus N$.
Then $\modext:=\bijelse(\widetilde{\modext})$ is the desired adhesion for $\bij$.
Therefore, the choice of the bijection $\bij$ is irrelevant and the validity of the
self-adhesivity condition is, in fact, an intrinsic property of $\mod$ (relative to $\ffam$).
\item This is to explain why the self-adhesivity concept is relative to a CI frame. Having two
frames $\ffam_{1}\subseteq\ffam_{2}$, imagine one finds a model $\mod\in\ffam_{1}(N)$
and a bijection $\bij:N\to M$ with $L=N\cap M$ such that a self-adhesion $\modext\in\ffam_{2}(NM)$
of $\mod$ for $\bij$ exists while there is no such a self-adhesion within $\ffam_{1}(NM)$.
Then $\mod$ is self-adhesive at $L$ relative to $\ffam_{2}$ but not relative to $\ffam_{1}$
(see Example~\ref{exa.sa-is-relative} below).
\item It follows from the definition that, given $\mod\in\ffam(N)$, the question whether $\mod\in\ffam^{\sa}(N)$
can be answered on basis of the characterization/description of families
$\ffam(M)$, where $|N|\leq |M|\leq 2\cdot |N|$.
Nevertheless, further reduction is possible. For example, it is immediate that any $\mod\in\ffam(N)$ is self-adhesive at $L:=N$. Additional reductions, based on further assumptions on the frame,
are discussed below.
\end{enumerate}

\begin{example}\label{exa.sa-is-relative}\rm
(\,self-adhesivity is relative to a frame\,)\\
Consider frames $\ffam_{1}:=\ungr$, $\ffam_{2}:=\semgr$ and the
variable set $N:=\{a,b,c\}$. The model $\mod:=\{\, bc|a\,\}\subseteq\sta(N)$ belongs to $\ungr(N)$: it is induced
by the graph $G$ over $N$ with two edges, those connecting $a$ with the other two nodes.
Then $\mod\in\semgr^{\sa}(N)$ but  $\mod\not\in\ungr^{\sa}(N)$. To this end consider the set $L:= bc$ and a bijection
$\bij:N\to M:=\{ \bar{a},b,c\}$ with $\bij(a):=\bar{a}$,
$\bij(b):=b$, and $\bij(c):=c$ (see Figure~\ref{fig2}).
Then $\mod$ is not self-adhesive at $L$ relative to $\ungr$.
Indeed, a potential adhesion $\modext$ of $\mod$ and $\bij(\mod)$ in $\ungr(NM)$ has to satisfy
$$
\{\, bc|a,\, bc|\bar{a},\, a\bar{a}|bc\,\}\subseteq\modext~~
\mbox{and} ~~\{ ab|c,\, ac|b,\,\bar{a}b|c,\, \bar{a}c|b\,\}\cap\modext=\emptyset\,,
$$
and there is no undirected graph $H$ over $NM$ with $\mod_{H}=\modext$.
\end{example}

\begin{figure}[t]
\centering
\includegraphics[scale=0.6]{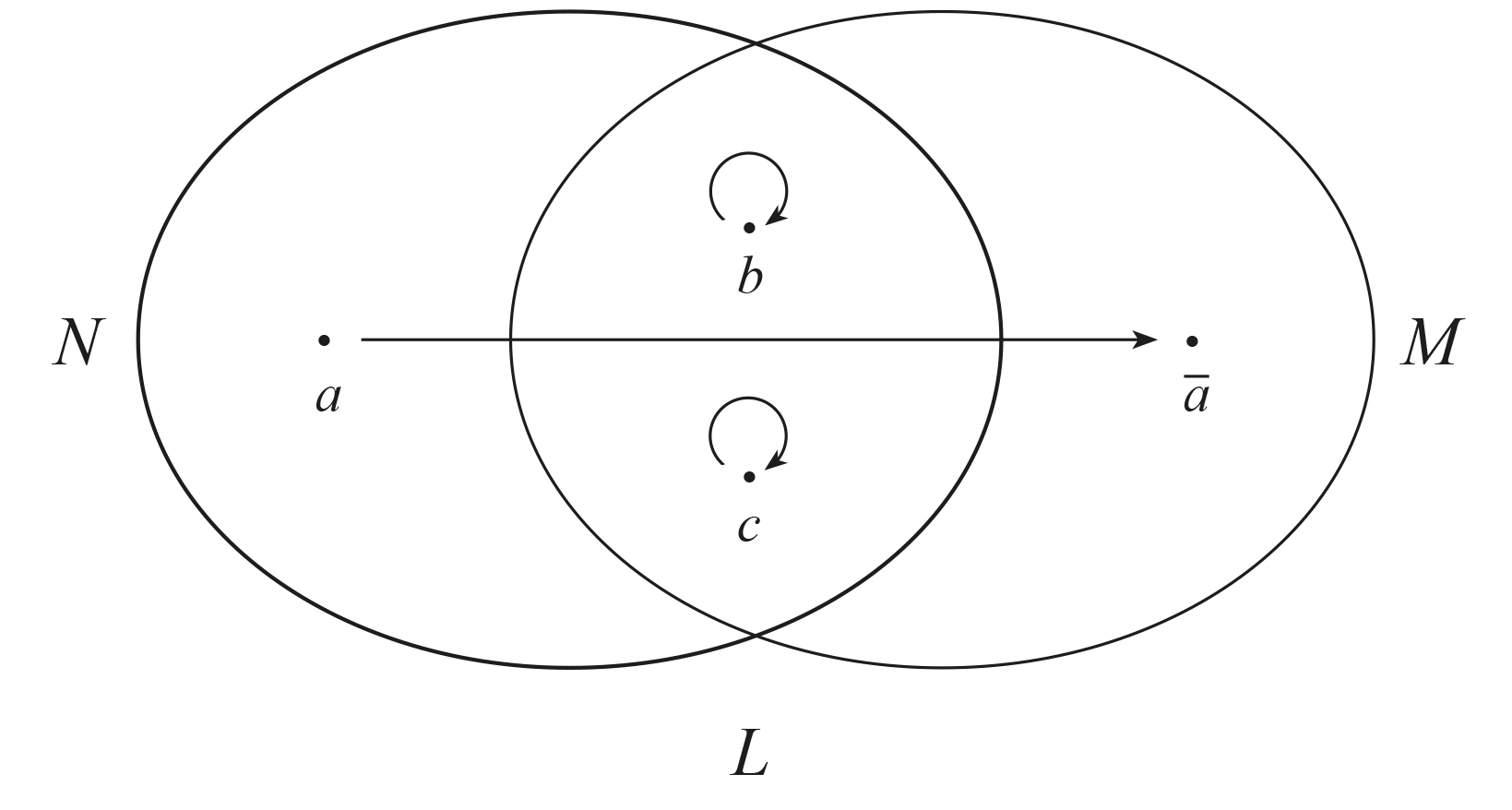}
\caption{The bijection $\bij:N\to M$ considered in Examples~\ref{exa.sa-is-relative}
and \ref{exa.self-adhe}.}
\label{fig2}
\end{figure}

If a CI frame $\ffam$ is closed under {\em intersection\/} then the self-adhesive closure of any model at any set is easy to compute by applying the $\ffam$-closure. In particular, the question of self-adhesivity testing can be simplified.

\begin{lemma}\label{lem.self-adhesion-at-set}\rm
Let $\ffam$ be a CI frame closed under copying, marginalization, intersection, and ascetic extension. Given $\mod\subseteq\sta(N)$ and $L\subseteq N$, the self-adhesive closure $\altmod=\ffam^{\sa}(\mod|L)$ exists and can be computed as follows. Having fixed a bijection
$\bij:N\to M$ with $L=N\cap M$ and $\bij_{\,|L}=\id_{L}$, apply the
next steps:
\begin{enumerate}
\item take the least self-adhesion of $\mod$:
$\modext:= \mod\cup \bij(\mod)\cup [N\setminus M,M\setminus N|L]$,
\item apply the $\ffam$-closure to it: $\widetilde{\modext}:=\ffam(\modext)$, \quad (see Definition~\ref{def.short-closure})
\item apply marginalization to it:
$\altmod:=\widetilde{\modext}^{\downarrow N}$.
\end{enumerate}
\end{lemma}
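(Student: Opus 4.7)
The plan is to verify the four defining properties of $\ffam^{\sa}(\mod|L)$ for the constructed $\altmod$ in turn: membership in $\ffam(N)$, containment of $\mod$, self-adhesivity at $L$, and minimality. The first is immediate, since $\widetilde{\modext}\in\ffam(NM)$ by Definition~\ref{def.short-closure} and $\ffam$ is closed under marginalization. The second follows because $\mod\subseteq\modext\subseteq\widetilde{\modext}$ and $\mod\subseteq\sta(N)$, so $\mod\subseteq\widetilde{\modext}\cap\sta(N)=\altmod$.

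The crux is self-adhesivity of $\altmod$ at $L$. By Remark~\ref{remarks:3.1} it suffices to exhibit \emph{one} bijection $N\to O$ fixing $L$ together with a witnessing adhesion in $\ffam(NO)$; I take the fixed $\bij:N\to M$ and claim that $\widetilde{\modext}$ itself witnesses the adhesion of $\altmod$ with $\bij(\altmod)$. The identities $\widetilde{\modext}^{\downarrow N}=\altmod$ and $(N\setminus M)\ci(M\setminus N)\mid L\,[\widetilde{\modext}]$ hold by construction, while only $\widetilde{\modext}^{\downarrow M}=\bij(\altmod)$ requires real work. For this I introduce the involution $\bijelse:NM\to NM$ that agrees with $\bij$ on $N\setminus L$, with $\bij^{-1}$ on $M\setminus L$, and with $\id_L$ on $L$. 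Because $\bij_{|L}=\id_L$, one checks piecewise that the three parts $\mod$, $\bij(\mod)$ and $[N\setminus M,M\setminus N|L]$ are permuted among themselves by $\bijelse$, so $\bijelse(\modext)=\modext$. Closedness of $\ffam$ under copying makes $\bijelse$ an automorphism of the lattice $(\ffam(NM),\subseteq)$ by Observation~\ref{obs.model-lattice.1}, and order automorphisms commute with closure, hence $\bijelse(\widetilde{\modext})=\widetilde{\modext}$. Since $\bijelse$ maps $N$ bijectively onto $M$ coinciding there with $\bij$, restricting this fixed-point identity to $\sta(M)$ yields $\widetilde{\modext}^{\downarrow M}=\bijelse(\widetilde{\modext}^{\downarrow N})=\bij(\altmod)$, as required.

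For minimality, suppose $\altmod'\in\ffam(N)$ satisfies $\mod\subseteq\altmod'$ and is self-adhesive at $L$. Applied to the same bijection $\bij$, self-adhesivity produces an adhesion $\modext'\in\ffam(NM)$ of $\altmod'$ with $\bij(\altmod')$, whose defining properties force $\modext'\supseteq\mod\cup\bij(\mod)\cup[N\setminus M,M\setminus N|L]=\modext$. Since $\widetilde{\modext}=\ffam(\modext)$ is the smallest element of $\ffam(NM)$ above $\modext$, one concludes $\widetilde{\modext}\subseteq\modext'$ and hence $\altmod=\widetilde{\modext}^{\downarrow N}\subseteq(\modext')^{\downarrow N}=\altmod'$. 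The main obstacle is the symmetry step in the self-adhesivity verification: converting the combinatorial symmetry $\bijelse(\modext)=\modext$ into $\bijelse(\widetilde{\modext})=\widetilde{\modext}$ requires both copying-closedness (so that $\bijelse$ acts as an order automorphism of $\ffam(NM)$) and closedness under intersection and ascetic extension (so that $\ffam(\cdot)$ is an unambiguous operator on subsets of $\sta(NM)$, by Observation~\ref{obs.model-lattice.4}).
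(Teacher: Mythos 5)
Your proof is correct and follows essentially the same route as the paper: the involution $\bijelse$ you construct is exactly the paper's permutation $\altbij$ of $NM$ (the extension of $\bij$ by $\bij_{-1}$ on $M\setminus N$), the deduction $\altbij(\modext)=\modext \Rightarrow \altbij(\widetilde{\modext})=\widetilde{\modext} \Rightarrow \widetilde{\modext}^{\downarrow M}=\bij(\altmod)$ is the paper's key step, and the minimality argument via $\modext\subseteq\modext'$ is identical. Your explicit appeal to the remark that a single witnessing bijection suffices is a welcome clarification of a point the paper leaves implicit.
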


\begin{proof}
Extend the bijection $\bij:N\to M$ by
the restriction of its own inversion $\bij_{-1}$ to $M\setminus N$ to get a permutation $\altbij$ of $NM$. Observe that $\altbij(\modext)=\modext$.
Because $\ffam$ is closed under copying, by arguments in Observation~\ref{obs.model-lattice.1}, the $\ffam$-closure over $NM$ commutes with $\altbij$ and one even has $\altbij(\widetilde{\modext})=\widetilde{\modext}$.
It follows from the construction and the fact that $\ffam$ is closed under marginalization that $\mod\subseteq\altmod\in\ffam(N)$. Using that
$\widetilde{\modext}$ is fixed by~$\psi$ and the fact that copying commutes with marginalization, one has
$$
\widetilde{\modext}^{\downarrow M}=
\widetilde{\modext}^{\downarrow \altbij(N)} =
\altbij(\widetilde{\modext})^{\downarrow \altbij(N)} =
\altbij_{\,|N}(\widetilde{\modext}^{\downarrow N}) =
\bij(\widetilde{\modext}^{\downarrow N}) =
\bij(\altmod)\,,
$$
which implies, by Definition~\ref{def.frame-self-adhesion}, that $\altmod$ is self-adhesive at $L$.

Given $\altmod^{\prime}\in\ffam(N)$ which is self-adhesive at $L$, and satisfies $\mod\subseteq \altmod^{\prime}$, it is easy to observe that
the respective adhesion $\modext^{\prime}\in\ffam(NM)$
satisfies $\modext\subseteq\modext^{\prime}$, which
yields  $\widetilde{\modext}\subseteq\modext^{\prime}$ and, hence, $\altmod\subseteq \altmod^{\prime}$.
Therefore, one indeed has $\altmod=\ffam^{\sa}(\mod|L)$.
\end{proof}

Note that the $\ffam$-closure is applied only once and
it is over a variable set of cardinality $|N|+|L|$.
The next example illustrates the method from Lemma~\ref{lem.self-adhesion-at-set}.

\begin{example}\label{exa.self-adhe}\rm
Take $N:=\{a,b,c\}$ and put $\mod:=\{\,ab|c\,\}$; note that $\mod\in\semgr(N)$.  We are going to test
self-adhesivity of $\mod$ at the set $L:=\{b,c\}$ relative to the semi-graphoidal frame~$\semgr$. As~explained (in the remark n.~\ref{remarks:3.1} above) it suffices to fix a variable set $M:=\{\bar{a},b,c\}$ and a bijection $\bij:N\to M$ defined by $\bij(a):=\bar{a}$, $\bij(b):=b$, and $\bij(c):=c$, (see Figure \ref{fig2}) and to find an adhesion of $\mod$ and
$\bij(\mod)$ which belongs to $\semgr(NM)$. To this end we consider their least adhesion
$\modext = \{\,ab|c,\, \bar{a}b|c,\ a\bar{a}|bc  \,\}$
and take its semi-graphoidal closure
$$
\semgr (\modext) \,=\, \{\,ab|c,\, ab|\bar{a}c,\, a\bar{a}|c,\, a\bar{a}|bc,\,  \bar{a}b|c,\, \bar{a}b|ac  \,\}\,.
$$
Since $\semgr (\modext)^{\downarrow N}=\mod$, one has $\semgr^{\sa}(\mod|L)=\mod$, and $\mod$ is self-adhesive at $L$ relative to $\semgr$. Observe that one has $\semgr (\modext)^{\downarrow M}=\{\,\bar{a}b|c\,\}=\bij(\mod)$.
\end{example}

This was a positive example of self-adhesivity at a set. On the other hand, Example \ref{exa.vamosi} (in Section~\ref{ssec.sa-semgr-4}),
obtained as a result of our computations, shows that it may happen that $\semgr^{\sa}(\mod|L)\neq\mod$.

\begin{lemma}\label{lem.pr-self-adhesion}\rm
The discrete probabilistic CI frame $\profam$ is self-adhesive: $\profam^{\sa}=\profam$.
\end{lemma}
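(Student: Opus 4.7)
The plan is to mimic the classical \emph{copy lemma} of Zhang and Yeung~\cite{ZY98}, which is the probabilistic origin of Mat\'{u}\v{s}'s adhesivity notion. Let $\mod\in\profam(N)$ be realized by a discrete random vector $\bxi=[\xi_i]_{i\in N}$ with joint density $p_N$, and fix $L\subseteq N$. By the remark following Definition~\ref{def.frame-self-adhesion} it suffices to work with a single bijection $\bij:N\to M$ satisfying $N\cap M=L$ and $\bij_{\,|L}=\id_L$; I further set $\sfX_{\bij(i)}:=\sfX_i$ for $i\in N\setminus L$, so that the $\bij$-copy of $\bxi$ from the proof of Lemma~\ref{lem.pr-frame} is naturally a random vector on $\sfX_M$.

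The core of the construction is the conditional product density $q$ on $\sfX_{NM}$ given by
\[
q(x_{NM}) \,:=\,
\begin{cases}
p_N(x_N)\cdot p_N(\bij_{-1}(x_M))\cdot p_L(x_L)^{-1} & \text{if } p_L(x_L)>0, \\
0 & \text{otherwise,}
\end{cases}
\]
where $\bij_{-1}(x_M)$ denotes the $N$-indexed tuple obtained from $x_M$ by re-indexing through $\bij_{-1}$. I would then let $\bzeta=[\zeta_i]_{i\in NM}$ be a random vector with density $q$ and propose $\mod_{\ibzeta}\in\profam(NM)$ as the desired self-adhesion.

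To complete the argument, three properties must be checked. First, a direct summation over $x_{M\setminus L}$, using the bijection $\bij$ on $N\setminus L$ together with the marginalization identity for $p_N$, shows that the marginal of $q$ on $N$ is precisely $p_N$; hence $(\mod_{\ibzeta})^{\downarrow N}=\mod$. The symmetric computation, combined with the description of the $\bij$-copy from the proof of Lemma~\ref{lem.pr-frame}, gives $(\mod_{\ibzeta})^{\downarrow M}=\bij(\mod)$. Second, the explicit factorization in the definition of $q$ shows that, conditional on $x_L$, the blocks $x_{N\setminus L}$ and $x_{M\setminus L}$ are independent; via the equivalence for global CI recalled after Definition~\ref{def.global-CI} this yields $(N\setminus M)\ci(M\setminus N)\,|\,L\,\,[\mod_{\ibzeta}]$.

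The step I expect to require the most care is the marginalization computation for $q$: one must handle the edge case $p_L(x_L)=0$ cleanly and verify that no spurious independence or dependence is introduced on $\sta(N)$ or $\sta(M)$. Apart from that, the argument is a routine verification; since $L\subseteq N$ was arbitrary, $\mod\in\profam^{\sa}(N)$ and consequently $\profam^{\sa}=\profam$.
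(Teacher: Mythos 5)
Your proof is correct and takes essentially the same route as the paper: realize $\mod$ by a discrete random vector $\bxi$, form its $\bij$-copy, and glue the two along $L$ via the conditional product, which the paper simply cites from \cite[\S\,2.3.3]{Stu05} and you write out explicitly (your density $q$ and its marginal/factorization checks are exactly that construction). The only difference is the level of detail, not the idea.
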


\begin{proof}
Given $\mod\in \profam(N)$, there exists a discrete random vector  $\bxi$
over $N$ such that $\mod$ is induced by $\bxi$\/: $\mod_{\ibxi}=\mod$. Given a bijection $\bij:N\to M$
with $\bij_{\,|N\cap\/ M}$ being the identity, define a random vector
 $\boeta:=[\xi_{\,\bij_{-1}(j)}]_{j\in M}$ over $M$ and have $\mod_{\iboeta}=\bij(\mod)$.
Then $\bxi$ and $\boeta$ completely coincide on $N\cap\/ M$
in the sense that $\bxi_{N\cap\/ M}=\boeta_{N\cap\/ M}$. Hence, one can construct
a random vector $\bzeta$ over $NM$ with $\bzeta_{N}=\bxi$, $\bzeta_{M}=\boeta$, and
$(N\setminus M)\ci (M\setminus N)\,|\,(N\cap\/ M)\,\,[\bzeta]$. This step corresponds to a
well-known construction of the {\em conditional product\/} of two consonant
multi-dimensional measures; see \cite[\S\,2.3.3]{Stu05}.
Thus, the model $\modext:=\mod_{\ibzeta}$ is the desired adhesion of $\mod$ and $\bij(\mod)$
belonging to the family $\profam(NM)$.
\end{proof}

\subsection{Self-adhesivity at small sets: lifting}\label{ssec.lifting}
In next two subsections we show that, if a CI frame is closed
under appropriate extending algebraic operations, then the involved models are automatically self-adhesive at sets of particular cardinality.
\smallskip

The first such operation, called {\em lifting\/}, ensures the self-adhesivity at the empty set and singletons. It formalizes the act of adding completely independent variables.
Specifically, in case of a CI model $\mod\subseteq\sta(N)$ and a single (outside) variable $o\not\in N$, the respective (one-variable) lift is the following model over $No$\,:
$$
\mod\,\cup\, \{\, ij|Ko\,:\ ij|K\in\mod\,\}
\,\cup\, \{\, io|K\,:\ i\in N,\, K\subseteq N\setminus i\,\}\,.
$$
Such one-element (independent) additions commute each other and allow one to introduce the addition
of a set of independent variables. Nonetheless, for technical reasons, we prefer the following
condensed definition, which is equivalent to the result of repeated one-variable additions.

\begin{definition}[lifting]\label{def.lifting}~\rm  \\[0.3ex]
Let $N\subseteq O$ be variable sets in inclusion and $\mod\subseteq\sta(N)$.
The {\em lift of $\mod$ to $O$} is the following model over $O$:
$$
\mod_{N\lift O} ~:=~ \{\, ij|K\in\sta(O) \,:\ ij\cap (O\setminus N)\neq\emptyset ~~\mbox{or}~~
ij|(K\cap N)\in\mod\,\}\,.
$$
A CI frame $\ffam$ is closed under {\em lifting}, if for any $\mod\in\ffam(N)$ and any superset $O\supseteq N$,
one has $\mod_{N\lift O}\in\ffam (O)$.
\end{definition}

Note that lifting complies with marginalization in the sense that,
for every $\mod\subseteq\sta(N)$ and $N\subseteq O$, one has $(\mod_{N\lift O})^{\downarrow N}=\mod$.

\begin{example}\label{exa.lift}\rm
Put $N:=\{a,b\}$, $O:=\{a,b,c\}$, $\mod:=\emptyset$, and $\altmod:=\{\,ab|\emptyset\,\}$ and
interpret both $\mod$ and $\altmod$ as CI models over $N$.
Then $\mod_{N\lift O}=\{\,ac|\emptyset,\, ac|b,\, bc|\emptyset,\, bc|a \,\}$.
Note that the variable $c$ became independent of the rest.
In the other case of $\altmod$, one gets
the full model: $\altmod_{N\lift O}=\sta(O)$.
\end{example}

\begin{lemma}\label{lem.lifting}\rm
The CI frames $\profam$, $\strum$, and $\semgr$ are closed under lifting.
\end{lemma}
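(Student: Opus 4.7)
For $\profam$, the plan is to take $\mod\in \profam(N)$ induced by a discrete random vector $\bxi=[\xi_{i}]_{i\in N}$ and extend it to a random vector $\boeta=[\eta_{i}]_{i\in O}$ over $O$ by setting $\eta_{i}:=\xi_{i}$ for $i\in N$ and choosing, for each $i\in O\setminus N$, a fresh $\eta_{i}$ with strictly positive distribution (for instance uniform on $\{0,1\}$), mutually independent and jointly independent of $\bxi$. The joint density then factorizes as $p_{\boeta}(x)=p_{\bxi}(x_{N})\cdot\prod_{k\in O\setminus N} p_{\eta_{k}}(x_{k})$ with every factor strictly positive. The equality $\mod_{\iboeta}=\mod_{N\lift O}$ is read off from two observations: if $\{i,j\}$ meets $O\setminus N$ then the corresponding $\eta$-variable is globally independent of the rest and $i\ci j\,|\,K\,[\boeta]$ holds for every $K$; if $\{i,j\}\subseteq N$ the $O\setminus N$ factors cancel on both sides of the defining CI equation, reducing it to $i\ci j\,|\,K\cap N\,[\bxi]$.

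For $\strum$, write $\mod=\mod^{m}$ with $m:\Pow(N)\to{\dv R}$ supermodular and use the pullback $\hat m:\Pow(O)\to{\dv R}$, $\hat m(S):=m(S\cap N)$. Supermodularity of $\hat m$ is immediate from the lattice identities $(A\cup B)\cap N=(A\cap N)\cup(B\cap N)$ and $(A\cap B)\cap N=(A\cap N)\cap(B\cap N)$ combined with supermodularity of $m$. The identity $\mod^{\hat m}=\mod_{N\lift O}$ reduces to a three-case evaluation of $\Delta\hat m(ij|K)$: if $\{i,j\}\subseteq N$ it equals $\Delta m(ij|K\cap N)$; if exactly one of $i,j$ lies in $O\setminus N$ the four summands collapse pairwise to zero; if both lie in $O\setminus N$ they all equal $m(K\cap N)$ and again cancel.

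For $\semgr$, verify the defining equivalence of a semi-graphoid directly by case analysis on which of $i,j,\ell$ lie in $O\setminus N$. By symmetry of the axiom under the swap $j\leftrightarrow\ell$ it suffices to prove one direction, so assume $ij|K,\, i\ell|jK\in\mod_{N\lift O}$. If $i\in O\setminus N$, both target statements $i\ell|K$ and $ij|\ell K$ contain $i$ and are automatic. If $i\in N$ and $\ell\in O\setminus N$, then $i\ell|K$ is in through $\ell$; for $ij|\ell K$, either $j\in O\setminus N$ and it is in through $j$, or $i,j\in N$ and the hypothesis $ij|K\in\mod_{N\lift O}$ delivers $ij|(K\cap N)\in\mod$, hence $ij|\ell K\in\mod_{N\lift O}$. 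If $i,\ell\in N$ and $j\in O\setminus N$, then $jK\cap N=K\cap N$ and the hypothesis $i\ell|jK\in\mod_{N\lift O}$ unfolds to $i\ell|(K\cap N)\in\mod$, giving $i\ell|K\in\mod_{N\lift O}$, while $ij|\ell K$ is in through $j$. Finally, if $i,j,\ell\in N$, the hypotheses translate to $ij|(K\cap N),\, i\ell|j(K\cap N)\in\mod$, and the semi-graphoid axiom for $\mod$ at the set $K\cap N$ supplies $i\ell|(K\cap N)$ and $ij|\ell(K\cap N)\in\mod$, which lift to $i\ell|K$ and $ij|\ell K\in\mod_{N\lift O}$. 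The main obstacle is purely the bookkeeping in this last frame: the case split on membership of $i,j,\ell$ in $O\setminus N$ must be kept aligned with the two disjuncts in the definition of $\mod_{N\lift O}$, but depth is nowhere needed—the $\profam$ and $\strum$ cases are each forced by their natural witness.
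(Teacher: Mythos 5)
Your proposal is correct and follows essentially the same route as the paper's proof: adjoining jointly independent fresh variables for $\profam$, pulling back the supermodular function along $S\mapsto S\cap N$ for $\strum$, and a direct case check of the semi-graphoid axiom for $\semgr$. You simply supply the case-by-case verifications that the paper leaves as ``easy'' or ``straightforward to observe''.
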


\begin{proof}
Given a discrete random vector $\bxi=[\xi_{i}]_{i\in N}$ over $N$ and $N\subseteq O$, one
can extend it to a random vector $\boeta=[\eta_{i}]_{i\in O}$ over $O$
with stochastically independent sub-vectors $\boeta_{N}:=\bxi$ and
$\boeta_{i}$, $i\in O\setminus N$. Then $\mod_{\iboeta}=(\mod_{\ibxi})_{N\lift O}$.

Given a supermodular function $r:{\cal P}(N)\to {\dv R}$ and $N\subseteq O$, one
can extend it to a supermodular function $m$ on ${\cal P}(O)$ by the formula
$m(S):=r(S\cap N)$ for any $S\subseteq O$. It is easy to observe that $\mod^{m}=(\mod^{r})_{N\lift O}$.

Given a semi-graphoid $\mod\in\semgr(N)$ over $N$ and $N\subseteq O$, it is straightforward to observe that
$\mod_{N\lift O}$ is a semi-graphoid over $O$.
\end{proof}

The lifting operation is related to adhesions of CI models at small sets.

\begin{lemma}\label{lem.lift-adhes-01}\rm
Let $\ffam$ be a CI frame which is closed under marginalization, intersection, and lifting.
Then, for every $\mod\in\ffam(N)$ and $\altmod\in\ffam(M)$ such that $|N\cap\/ M|\leq 1$,
an adhesion $\modext\in\ffam(NM)$ of $\mod$ and $\altmod$ exists.
\end{lemma}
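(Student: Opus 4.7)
The plan is to construct the desired adhesion $\modext$ by lifting both models to the larger variable set $NM$ and then taking their intersection; closure of $\ffam$ under lifting and intersection guarantees that the result lies in $\ffam(NM)$, and the hypothesis $|N\cap M|\leq 1$ takes care of all the non-trivial combinatorics.

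Concretely, I would set $L := N\cap M$ and define
\[
\modext \,:=\, \mod_{N\lift NM} \,\cap\, \altmod_{M\lift NM}\,.
\]
By Definition~\ref{def.lifting} and the hypotheses, both $\mod_{N\lift NM}$ and $\altmod_{M\lift NM}$ belong to $\ffam(NM)$, and closure under intersection places $\modext$ in $\ffam(NM)$. Note that consonancy of $\mod$ and $\altmod$ holds vacuously: $|L|\leq 1$ gives $\sta(L)=\emptyset$, so both marginals to $L$ are empty.

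The verification then splits into three easy checks. First, I claim $(\altmod_{M\lift NM})^{\downarrow N}=\sta(N)$: every $ij|K\in\sta(N)$ satisfies $\{i,j\}\cap M\subseteq L$, which has cardinality at most one, so at least one of $i,j$ lies in $N\setminus M = NM\setminus M$, putting $ij|K$ in the lift by the first clause of Definition~\ref{def.lifting}. Since $(\mod_{N\lift NM})^{\downarrow N}=\mod$ by the remark right after Definition~\ref{def.lifting}, and marginalization commutes with intersection, one obtains
\[
\modext^{\downarrow N} \,=\, \mod \cap \sta(N) \,=\, \mod\,.
\]
The symmetric argument yields $\modext^{\downarrow M}=\altmod$.

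Finally, for the CI condition $(N\setminus M)\CIperp(M\setminus N)\mid L\;[\modext]$, pick any $i\in N\setminus M$, $j\in M\setminus N$, and any admissible conditioning set $K$ with $L\subseteq K$. Since $j\in M\setminus N = NM\setminus N$, the first clause of the lift puts $ij|K$ into $\mod_{N\lift NM}$; since $i\in N\setminus M = NM\setminus M$, the same clause puts $ij|K$ into $\altmod_{M\lift NM}$. Hence $ij|K\in\modext$, as required. The only step that requires any care is keeping the two lifts straight and using $|L|\leq 1$ at the right moment; no genuine obstacle arises, because the cardinality hypothesis forces every pair $ij\subseteq N$ (resp.\ $\subseteq M$) to meet $N\setminus M$ (resp.\ $M\setminus N$), which is exactly what makes the trivial half of each lift collapse to all of $\sta(N)$ (resp.\ $\sta(M)$).
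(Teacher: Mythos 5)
Your proof is correct and follows essentially the same route as the paper: lift $\mod$ and $\altmod$ to $NM$, observe that each lift has full marginal on the other ground set (thanks to $|N\cap M|\le 1$) and already encodes the global CI statement, and intersect. The only difference is that you spell out the verification of these facts in slightly more detail than the paper does.
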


\begin{proof}
Put $\hat{\mod}:=\mod_{N\lift NM}$ and observe that
$(N\setminus M)\ci (M\setminus N)\,|\,(N\cap\/ M)\,\,[\hat{\mod}]$ and $\hat{\mod}^{\downarrow N}=\mod$.
Moreover, $|N\cap\/ M|\leq 1$ implies $\hat{\mod}^{\downarrow M}=\sta(M)$ and the fact $\hat{\mod}\in\ffam(NM)$
follows from the assumption that $\ffam$ is closed under lifting. Putting $\hat{\altmod}:=\altmod_{M\lift NM}\in\ffam(NM)$
gives
$(N\setminus M)\ci (M\setminus N)\,|\,(N\cap\/ M)\,\,[\hat{\altmod}]$
with $\hat{\altmod}^{\downarrow N}=\sta(N)$ and $\hat{\altmod}^{\downarrow M}=\altmod$.
Since $\ffam$ is closed under intersection, $\modext:=\hat{\mod}\cap\hat{\altmod}$
belongs to $\ffam(NM)$ and, moreover, by the above facts,
$\modext$ is an adhesion of $\mod$ and~$\altmod$.
\end{proof}

Of course, the above observation on adhesivity applies to self-adhesivity.

\begin{cor}\label{cor.lift-self-adhes-01}\rm
Let $\ffam$ be a CI frame which is closed under copying, marginalization, intersection, and lifting.
Then, for any $N$, any model
$\mod\in\ffam(N)$ is self-adhesive
at every set of cardinality $0$ or $1$.
\end{cor}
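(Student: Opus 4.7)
The plan is to reduce the claim directly to Lemma \ref{lem.lift-adhes-01}, using the closure of $\ffam$ under copying to manufacture the right pair of models to which that lemma can be applied. Fix any $\mod \in \ffam(N)$ and any $L \subseteq N$ with $|L| \leq 1$. According to Definition \ref{def.frame-self-adhesion}, to verify self-adhesivity at $L$ I need to exhibit, for every bijection $\bij : N \to M$ satisfying $L = N \cap M$ and $\bij_{|L} = \id_L$, an adhesion $\modext \in \ffam(NM)$ of $\mod$ and $\bij(\mod)$.

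Fix such a $\bij$. The first step is to observe that $\bij(\mod) \in \ffam(M)$, which is immediate from the fact that $\ffam$ is closed under copying. The second step is to note that the hypothesis of Lemma \ref{lem.lift-adhes-01} is satisfied for the pair $\mod \in \ffam(N)$, $\bij(\mod) \in \ffam(M)$: indeed $|N \cap M| = |L| \leq 1$ by construction, and the remaining hypotheses on $\ffam$ (closure under marginalization, intersection, and lifting) are part of the assumptions of the corollary. The third step is simply to apply Lemma \ref{lem.lift-adhes-01} to this pair, producing an adhesion $\modext \in \ffam(NM)$ of $\mod$ and $\bij(\mod)$. Since Definition \ref{def.frame-self-adhesion} asks for exactly such a $\modext$, this completes the verification of self-adhesivity at $L$.

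There is essentially no obstacle beyond matching hypotheses: the construction of the adhesion was already done inside Lemma \ref{lem.lift-adhes-01} via the explicit formula $\modext := \mod_{N \lift NM} \cap \bij(\mod)_{M \lift NM}$, and the corollary only needs to recognize that every self-adhesivity test at a set $L$ with $|L|\leq 1$ is, after applying copying, an instance of that lemma. If desired, one may also invoke remark n.~\ref{remarks:3.1} after Definition \ref{def.frame-self-adhesion} to say that the choice of $\bij$ is irrelevant, so it suffices to construct the adhesion for any single convenient bijection; but this is not strictly needed because Lemma \ref{lem.lift-adhes-01} produces an adhesion for the pair $\mod$, $\bij(\mod)$ uniformly in $\bij$.
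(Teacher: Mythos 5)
Your proposal is correct and follows exactly the paper's own route: the paper likewise proves the corollary by applying Lemma~\ref{lem.lift-adhes-01} to $\mod$ and $\altmod:=\bij(\mod)$, using closure under copying to ensure $\bij(\mod)\in\ffam(M)$. Your additional remarks about the explicit adhesion formula and the irrelevance of the choice of $\bij$ are accurate but not needed.
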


\begin{proof}
Apply Lemma~\ref{lem.lift-adhes-01} to $\mod$ and $\altmod:=\bij(\mod)$ for the appropriate bijection $\bij:N\to M$.
\end{proof}

\subsection{Self-adhesivity at co-singletons: tight replication}\label{ssec.tight-replica}
In this subsection we discuss a particular extending algebraic operation which ensures that,
if a CI frame $\ffam$ is closed under this operation, then every $\mod\in\ffam(N)$ is
self-adhesive at {\em co-singletons}, that is, at the complements of singletons within the variable set $N$.
This operation is called {\em tight replication\/} and formalizes the act of adding one
identical copy of a fixed variable in $N$. This (technically complex) operation was inspired by an analogous (poly-)matroidal
operation of {\em parallel extension}.
\smallskip

Let us first remind what is our goal and re-formulate the condition of self-adhesivity at a co-singleton
(we wish to ensure). Consider $\mod\in\ffam(N)$, fix a variable $u\in N$ and an outside variable $v\not\in N$. Consider
the co-singleton $L:=N\setminus u$, denote $M:=Lv$, and introduce $\altbij:N\to M$ which is identical on $L$
and $\altbij(u)=v$. By Definition~\ref{def.frame-self-adhesion}, our goal is to verify the
existence of
$$
\mbox{$\modext\in\ffam(NM)$ ~~which satisfies~~ $\modext^{\downarrow N}=\mod$, $\modext^{\downarrow M}=\altbij(\mod)$, and $u\ci v\,|\,L\,\,[\modext]$.}
$$
One of possible ways is to define $\modext$ as a particular extension of $\mod$ in which $v$ is the ``identical" copy of $u$. To this end an auxiliary mapping $\bijelse:NM\to N$ is used, which an extension of the inverse $\altbij_{-1}:M\to N$.

\begin{definition}[tight replication]\label{def.tight}~\rm \\[0.3ex]
Let $N,M$ be two variable sets of cardinality $n\geq 1$ such that $|N\cap\/ M|=n-1$.
Let us denote $L:=N\cap M$, $u:=N\setminus L$, $v:=M\setminus L$, and introduce
a mapping $\bijelse:NM\to N$ by $\bijelse(i):=i$ for $i\in N$ and $\bijelse(v):=u$.
Given $\mod\subseteq\sta(N)$, its {\em tight replication\/} (based on $\rho$) is the model $\mod_{v\|u}\subseteq\sta(NM)$
defined as follows:
\begin{eqnarray*}
\lefteqn{\hspace*{-5mm}\mbox{given $ij|K\in\sta(NM)$,}\quad ij|K\in\mod_{v\|u} ~~\mbox{if and only if}~~ \bijelse(i)\ci \bijelse(j)\,|\,\bijelse(K)\,\,[\mod],}\\
&& \mbox{under conventions that $a\ci b\,|\,C\,\,[\mod]$ whenever $ab\cap C\neq \emptyset$}\\
&& \mbox{and $u\ci u\,|\,C\,\,[\mod]$ for $C\subseteq L$ means $u\ci (L\setminus C)\,|\, C\,\,[\mod]$\,.}
\end{eqnarray*}
A CI frame $\ffam$ is closed under {\em tight replication}, if for any such a pair $N,M$ of variable sets
and $\mod\in\ffam(N)$, one has $\mod_{v\|u}\in\ffam (NM)$.
\end{definition}

Note that tight replication also complies with
marginalization in the sense that ${(\mod_{v\|u})^{\downarrow N}=\mod}$.

\begin{figure}[t]
\centering
\includegraphics[scale=0.6]{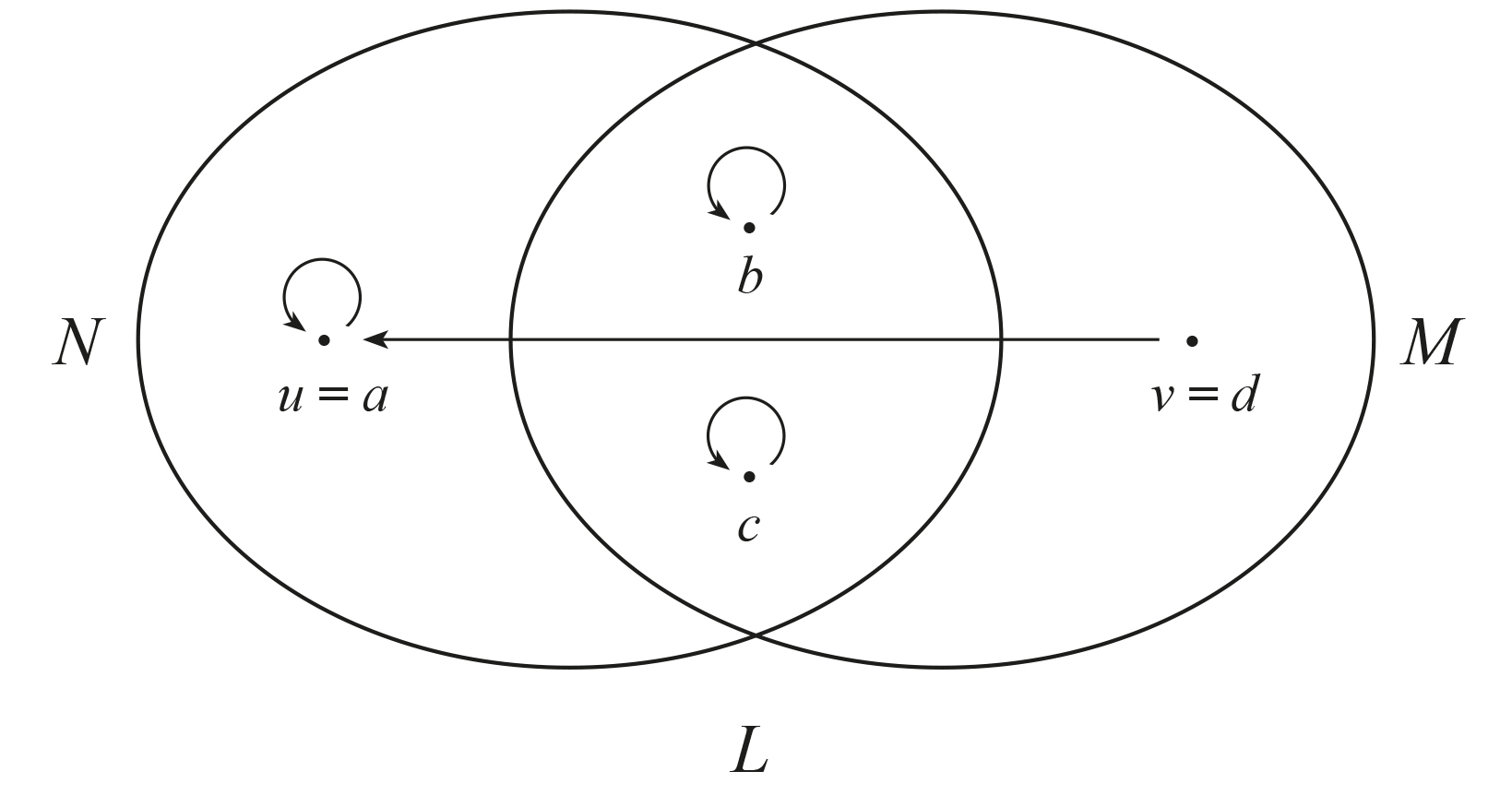}
\caption{The auxiliary mapping $\bijelse:NM\to N$ considered in Example~\ref{exa.tigh-replic}.}
\label{fig3}
\end{figure}

\begin{example}\label{exa.tigh-replic}\rm
Consider the situation from Definition~\ref{def.tight}
when one has $N:=\{a,b,c\}$, $M:=\{b,c,d\}$, $L:=\{b,c\}$, $u:=a$ and $v:=d$.
Define $\bijelse(a):=a$, $\bijelse(b):=b$, $\bijelse(c):=c$,
$\bijelse(d):=a$ (see Figure \ref{fig3} for illustration), put
$\mod:=\emptyset$ and $\altmod:=\{\,ab|c\,\}$. Then
$$
\mod_{v\|u}=\{\,\, ab|d,\, ab|cd,\, ac|d,\, ac|bd,\, bd|a,\, bd|ac,\, cd|a,\, cd|ab,\, ad|bc \,\,\}\,,
$$
where the first 8 statements are there because of the first convention saying that $a\ci b\,|\,C\,\,[\mod]$ whenever $ab\cap C\neq \emptyset$ and the last statement  $ad|bc$ is added
because of the second convention
$u\ci u\,|\,C\,\,[\mod]$ if and only if $u\ci (L\setminus C)\,|\, C\,\,[\mod]$.
In case of $\altmod$ one gets
$\altmod_{v\|u}=\mod_{v\|u}\cup\{\, ab|c,\, bd|c,\, ad|c \,\}$,
where $bd|c$ is mapped by $\bijelse$ to $ab|c$ and
$ad|c$ is added because of the second convention.
\end{example}

\noindent {\em Remark on terminology.}\,\,
Operations with supermodular set functions, which are analogous to those in Definitions~\ref{def.lifting} and \ref{def.tight}, were discussed in \cite[\S\,7.1 and \S\,10.2]{Stu16}. %
Moreover, the concept of tight replication of CI models is a counterpart
of %
an analogous notion of {\em parallel extension\/} of matroids \cite[\S\,7.2]{Oxl11} and of {\em polymatroids\/} \cite[\S\,2]{Mat07DM}.
Nevertheless, the parallel extension of polymatroids has
two slightly different reflections in the context of induced CI models.
In the case that the considered polymatroid $h$ is {\em tight\/}
\cite[\S\,1.3]{Csi20Kyb} at the variable $u\in N$, which means
that $h(N)=h(N\setminus u)$, it leads to our concept
from Definition~\ref{def.tight}. If, however, $h$ is not tight at $u$ then it leads to the concept of parallel extension of CI models introduced in 2004 by Mat\'{u}\v{s} \cite[end of \S\,4]{Mat04DM}.
The~extension we need for our purposes is the one
that corresponds to a tight polymatroid. Therefore,
to avoid a terminological clash, we have chosen a different
term than parallel extension. Our~terminology also reflects the
crucial role of tightness.

\begin{lemma}\label{lem.struc-par-extens}\rm
The CI frames $\strum$ and $\semgr$ are closed under tight replication.
\end{lemma}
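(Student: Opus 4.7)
The plan is to treat the two frames separately. In both, the key tool is the surjection $\rho:NM\to N$ collapsing $v$ onto $u$: the model $\mod_{v\|u}$ is essentially the pullback of $\mod$ along $\rho$, modified by the two conventions in Definition~\ref{def.tight}.

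For $\semgr$, I would verify the semi-graphoid axiom on $\hat{\mod}:=\mod_{v\|u}$ directly. Fix distinct $i,j,\ell\in NM$ and $K\subseteq NM\setminus\{i,j,\ell\}$ and translate each $ab|C\in\hat{\mod}$ into $\rho(a)\ci\rho(b)\,|\,\rho(C)\,\,[\mod]$. A case analysis on the location of $u,v$ in $\{i,j,\ell\}\cup K$ reduces most cases to the semi-graphoid axiom on $\mod$ applied to the $\rho$-image. Two non-routine situations arise: if $\rho$ forces an overlap between $\{\rho(i),\rho(j),\rho(\ell)\}$ and $\rho(K)$, convention~1 renders the affected statements automatic; if $\{i,j\}=\{u,v\}$ or $\{i,\ell\}=\{u,v\}$, convention~2 produces a global statement $u\ci L\setminus C\,|\,C\,\,[\mod]$. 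The latter is handled by the semi-graphoidal chain rule $u\ci J\,|\,K\iff u\ci j\,|\,K\wedge u\ci(J\setminus j)\,|\,jK$ for $j\in J$, a standard consequence of decomposition, weak union and contraction, which lets both sides of the axiom match.

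For $\strum$, let $\mod=\mod^{m}$ for a supermodular $m:{\cal P}(N)\to{\dv R}$. I would first \emph{normalise} $m$ by adding the modular function $f(S):=(m(L)-m(N))\cdot[u\in S]$. This preserves every $\Delta m(ij|K)$ and hence $\mod^{m}$, while achieving the ``tightness'' $m(N)=m(L)$. Supermodularity then yields $m(X\cup u)\leq m(X)$ for every $X\subseteq L$, i.e.\ $m$ is nonincreasing at $u$. Define $\hat{m}(S):=m(\rho(S))$. Since $\rho(A\cup B)=\rho(A)\cup\rho(B)$ always holds, supermodularity of $\hat{m}$ reduces to supermodularity of $m$ at $\rho(A),\rho(B)$ except when $\rho(A\cap B)\subsetneq\rho(A)\cap\rho(B)$; this occurs exactly when one of $A,B$ contains $u$ but not $v$ and the other contains $v$ but not $u$, in which case the missing element is $u$, and the required inequality $m(\rho(A\cap B))\geq m(\rho(A)\cap\rho(B))$ is precisely the nonincreasing-at-$u$ property. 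The identity $\mod^{\hat{m}}=\mod_{v\|u}$ is verified by case-wise computation of $\Delta\hat{m}(ij|K)$; the decisive case $ij=uv$, $K\subseteq L$ gives $\Delta\hat{m}(uv|K)=m(K)-m(uK)$, and the tightness $m(N)=m(L)$ converts the vanishing of this into $\Delta m\langle u,L\setminus K\,|\,K\rangle=0$, which is precisely the polymatroidal encoding of convention~2's global statement.

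The main obstacle is this last reconciliation with convention~2: the naive pullback $m\circ\rho$ without normalisation would give $\Delta\hat{m}(uv|K)=0$ iff the strictly stronger functional-dependence condition $m(K)=m(uK)$, which does not in general match convention~2 when $m$ is not tight at $u$. The shift $f(u)=m(L)-m(N)$ is always available because supermodularity of $m$ guarantees $m(L)-m(N)\leq m(X)-m(X\cup u)$ for every $X\subseteq L$, and this single normalisation simultaneously reconciles the pullback with convention~2 and delivers the monotonicity at $u$ that $\hat{m}$ needs to be supermodular. For $\semgr$ the case analysis is technically tedious but conceptually routine once the chain rule is in hand.
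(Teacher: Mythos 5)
Your proposal is correct and follows essentially the same route as the paper: the same tightness normalisation $r\mapsto\hat r$ with shift $r(L)-r(N)$ on sets containing $u$, the same pullback $\hat r\circ\rho$, and the same reconciliation of $\Delta(uv|K)$ with convention~2 via $\hat r(N)=\hat r(L)$; for semi-graphoids, the same case analysis on the position of $u,v$, with convention~2 handled by the semi-graphoidal chain rule. The only cosmetic difference is that you verify supermodularity of the pullback via the two-set inequality and the nonincreasing-at-$u$ property, whereas the paper checks the elementary differences $\Delta m(ij|K)\geq 0$ directly — these are equivalent formulations.
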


\begin{proof}
Consider variable sets $N,M$ assumed in the definition of tight replication and keep the corresponding notational conventions from Definition~\ref{def.tight}.
Assume that $\mod\in\strum(N)$ is induced by a supermodular function $r:{\cal P}(N)\to {\dv R}$.
Slight modification $\hat{r}$ of $r$ defined by
$$
\hat{r}(S) ~:=~
\left\{
\begin{array}{cl}
r(S)-r(N)+r(L) & \mbox{if $u\in S$,}\\
r(S) & \mbox{if $u\not\in S$,}\\
\end{array}
\right.
\qquad \mbox{for $S\subseteq N$,}
$$
is another supermodular function on ${\cal P}(N)$ which also induces $\mod$ but, moreover, satisfies
$\hat{r}(N)=\hat{r}(L)$.
The next step is to define
$$
m(S) ~:=~ \hat{r}(\,(S\cap N)\cup \bijelse(S\cap M)\,)\qquad \mbox{for~ $S\subseteq NM$}\,.
$$
For any $ij|K\in\sta(NM)$ one has $\Delta m (ij|K)=\Delta \hat{r}\,\langle\bijelse(i),\bijelse(j)|\bijelse(K)\rangle$.
The latter expression is non-negative in case $\bijelse(ij)|\bijelse(K)\in\sta(N)$ by supermodularity of $\hat{r}$.
Moreover, the expression vanishes if $\bijelse(ij)\cap\bijelse(K)\neq\emptyset$. In the remaining case
$\bijelse(i)=u=\bijelse(j)$ and $C:=\bijelse(K)\subseteq L$ one has
$$
\Delta \hat{r}\, \langle\bijelse(i),\bijelse(j)|\bijelse(K)\rangle=
\Delta \hat{r}\, \langle u,u|C\rangle=
\hat{r}(C)-\hat{r}(uC) =
\Delta \hat{r}\, \langle u,L\setminus C|C\rangle\geq 0\,,
$$
by supermodularity of $\hat{r}$. Hence, $m$ is supermodular and it follows from the definition
of tight replication that $\mod^{m}=\mod_{v\|u}$.

Given $\mod\in\semgr(N)$, it is enough to show that $\modext:=\mod_{v\|u}$ is a semi-graphoid.
Consider pairwise disjoint sets $i,j,\ell,K\subseteq NM$ and the goal
is to verify
$$
ij|K,\, i\ell |jK\in\modext ~\Leftrightarrow~ i\ell|K,\, ij|\ell K\in\modext\,.
$$
One can distinguish several cases determined by the mutual position of variables $u$ and $v$ relative
to $i,j,\ell,K$. If $uv\setminus ij\ell K\neq\emptyset$ then the conclusion follows from the facts
that $\modext^{\downarrow N}=\mod$ and  $\modext^{\downarrow M}$ is a copy of $\mod$. The same principle applies if $u,v\in K$.
In~cases $|uv\cap K|=1=|uv\cap ij\ell|$ the first convention in the definition of $\mod_{v\|u}$
is applied. This also extends to the case $uv=j\ell$. In~the~cases $uv=ij$ and $uv=i\ell$
one additionally applies the second
convention in the definition of $\mod_{v\|u}$ together with the assumption that $\mod$ is a semi-graphoid.
\end{proof}

\noindent {\em Remark.} Note in connection with Lemma~\ref{lem.struc-par-extens} that
have not tried to answer the question whether $\profam$ is closed under tight replication.
If it is the case then this certainly requires a technically complicated proof, but the point
is that such a result is not needed for our purpose. Indeed, we know, by Lemma~\ref{lem.pr-self-adhesion},
that $\profam$ is self-adhesive, and, thus, self-adhesive at co-singletons.

\begin{lemma}\label{lem.par-ext-n-1}\rm
Let $\ffam$ be a CI frame which is closed under copying, marginalization, and tight replication.
Then, for any variable set $N$ of cardinality $n\geq 1$, any model $\mod\in\ffam(N)$ is self-adhesive
at every co-singleton.
\end{lemma}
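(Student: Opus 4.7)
The plan is to propose the tight replication $\mod_{v\|u}$ itself as the desired adhesion witnessing self-adhesivity at the co-singleton $L := N \setminus u$. By the remark following Definition~\ref{def.frame-self-adhesion} (which rests on closure under copying), it suffices to verify the self-adhesivity condition for one concrete bijection. So I would pick any $v \notin N$, set $M := Lv$, and let $\altbij : N \to M$ be the identity on $L$ with $\altbij(u) = v$; the associated auxiliary map $\bijelse : NM \to N$ from Definition~\ref{def.tight} is the identity on $N$ and sends $v$ to $u$. Closure under tight replication then delivers $\modext := \mod_{v\|u} \in \ffam(NM)$, and the claim is that $\modext$ is the required adhesion.

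Three items need to be checked. First, $\modext^{\downarrow N} = \mod$, which is exactly the statement recorded as the remark directly after Definition~\ref{def.tight}. Second, $\modext^{\downarrow M} = \altbij(\mod)$; this is where the bookkeeping sits. The key observation is that $\bijelse$ restricted to $M$ coincides with $\altbij^{-1}$. For any elementary $ij|K \in \sta(M)$ the sets $\{i\}, \{j\}, K$ are pairwise disjoint, so their images under $\bijelse$ remain pairwise disjoint in $N$, and the defining biconditional from Definition~\ref{def.tight} reduces, without invoking either of the two conventions, to the chain $ij|K \in \modext \Leftrightarrow \altbij^{-1}(ij|K) \in \mod \Leftrightarrow ij|K \in \altbij(\mod)$. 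A small case split according to whether $v \in ij$, $v \in K$, or $v \notin ijK$ confirms this uniformly.

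Third, the global statement $u \ci v \mid L \,[\modext]$ needs to be derived. Inspecting Definition~\ref{def.global-CI} with $I = \{u\}$, $J = \{v\}$, and $K = L$, one has $IJK = NM$, so the only admissible conditioning set $L'$ satisfying $L \subseteq L' \subseteq NM \setminus uv = L$ is $L$ itself, and the requirement collapses to the single membership $uv|L \in \modext$. Under $\bijelse$ this formal triple maps to the expression $uu|L$, so by the second convention in Definition~\ref{def.tight} it lies in $\modext$ if and only if $u \ci (L \setminus L) \mid L \,[\mod]$, i.e.\ $u \ci \emptyset \mid L \,[\mod]$, which is vacuously true.

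The main obstacle, such as it is, lies in the case analysis for the second item: one must be careful that the two conventions in Definition~\ref{def.tight} are never silently triggered when computing $\modext^{\downarrow M}$, which is why the equality $\bijelse_{|M} = \altbij^{-1}$ is doing the real work. Every other step is a routine unwinding of definitions once tight replication is identified as the natural candidate for the adhesion.
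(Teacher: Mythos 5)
Your proposal is correct and follows the same route as the paper: both take the tight replication $\mod_{v\|u}$ as the adhesion and verify the two marginal conditions and the global statement $u\ci v\,|\,L$ directly from Definition~\ref{def.tight}. The paper merely states these verifications as immediate observations, whereas you spell out the case analysis (in particular that $\bijelse_{|M}=\altbij_{-1}$ and that the conventions are not triggered on $\sta(M)$), which is a faithful expansion of the same argument.
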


\begin{proof}
Consider $\mod\in\ffam(N)$, a subset $L\subseteq N$ with $|L|=n-1$ (= a co-singleton), and
a bijection $\altbij:N\to M$ such that $N\cap\/ M=L$ and $\altbij_{\,|L}=\id_{L}$.
Put $u:=N\setminus L$ and $v:=M\setminus L$. By the assumption,
$\modext:=\mod_{v\|u}$ belongs to $\ffam(NM)$. The definition
of tight replication allows one to observe that $\modext^{\downarrow N}=\mod$,
$\modext^{\downarrow M}=\altbij(\mod)$, and $u\ci v\,|\,L\,\,[\modext]$.
Hence, $\modext$ is an adhesion of $\mod$ and $\altbij(\mod)$.
\end{proof}

We now apply the observations from Sections~\ref{ssec.lifting} and \ref{ssec.tight-replica}.

\begin{cor}\label{cor.case-3}\rm
If a CI frame $\ffam$ is closed under copying, marginalization, intersection, lifting,
and tight replication and $|N|\leq 3$ then $\ffam^{\sa}(N)=\ffam(N)$.
In particular, $\semgr^{\sa}(N)=\semgr(N)$ and $\strum^{\sa}(N)=\strum(N)$ if $|N|\leq 3$.
\end{cor}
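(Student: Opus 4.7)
The plan is to verify the self-adhesivity condition at every $L\subseteq N$ by exploiting the three sources of automatic self-adhesion that have been made available in the preceding subsections: triviality at $L=N$, \Cref{cor.lift-self-adhes-01} for small $L$, and \Cref{lem.par-ext-n-1} for co-singletons. The punchline is that when $|N|\leq 3$, every subset $L\subseteq N$ has cardinality $0$, $1$, $|N|{-}1$, or $|N|$, so these three cases already exhaust all possibilities.

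In detail, I would first dispose of the trivial case $L=N$: the condition $L=N\cap M$ with $\bij_{\,|L}=\id_{L}$ forces $M=N$ and $\bij=\id_{N}$, and then $\modext:=\mod$ is an adhesion of $\mod$ with itself, the global CI statement $(N\setminus M)\ci (M\setminus N)\,|\,L\,\,[\modext]$ being vacuous since $N\setminus M=\emptyset$. Next, since $\ffam$ is closed under copying, marginalization, intersection, and lifting, \Cref{cor.lift-self-adhes-01} yields self-adhesivity of any $\mod\in\ffam(N)$ at every $L$ with $|L|\leq 1$. Finally, since $\ffam$ is closed under copying, marginalization, and tight replication, \Cref{lem.par-ext-n-1} yields self-adhesivity at every co-singleton $L\subseteq N$ with $|L|=|N|-1$. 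Combining these three ingredients, $\mod\in\ffam^{\sa}(N)$ whenever $|N|\leq 3$.

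For the particular claims about $\semgr$ and $\strum$ it remains only to collect the relevant closure properties from earlier sections: both frames are closed under copying, marginalization, and intersection by \Cref{lem.sg-frame,lem.st-frame}, under lifting by \Cref{lem.lifting}, and under tight replication by \Cref{lem.struc-par-extens}. Thus the general statement specializes to the desired conclusion.

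There is no real obstacle: the result is a bookkeeping consequence of the machinery assembled in \Cref{ssec.lifting,ssec.tight-replica}, and the whole point of the threshold ``$|N|\leq 3$'' is precisely that the remaining cardinalities $2\leq |L|\leq |N|-2$, which are not covered by any of our three sources, do not occur.
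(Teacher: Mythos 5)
Your proposal is correct and follows essentially the same route as the paper: combine \Cref{cor.lift-self-adhes-01} for $|L|\leq 1$, \Cref{lem.par-ext-n-1} for co-singletons, and the trivial case $L=N$, then note that for $|N|\leq 3$ these exhaust all subsets, and invoke Lemmas~\ref{lem.sg-frame}, \ref{lem.st-frame}, \ref{lem.lifting}, and \ref{lem.struc-par-extens} for the specialization to $\semgr$ and $\strum$. Your write-up merely makes explicit the case analysis and the vacuousness of the adhesion condition at $L=N$, which the paper leaves as an earlier remark.
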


\begin{proof}
Combine Corollary~\ref{cor.lift-self-adhes-01}, Lemma~\ref{lem.par-ext-n-1} and the fact
that any $\mod\in\ffam(N)$ is self-adhesive at $L:=N$.
Use Lemmas~\ref{lem.sg-frame}, \ref{lem.st-frame}, \ref{lem.lifting}, and \ref{lem.struc-par-extens} for the second part.
\end{proof}

The second part of Corollary~\ref{cor.case-3} is not surprising in light of Lemma~\ref{lem.pr-self-adhesion} since
$\profam(N)=\strum(N)=\semgr(N)$ if $|N|\leq 3$. On the other hand, Corollary~\ref{cor.lift-self-adhes-01} and
Lemma~\ref{lem.par-ext-n-1} allow one to simplify testing self-adhesive models in case of large variable sets $N$ with
$|N|\geq 4$.

\section{Self-adhesion operator and its iterating}\label{sec.iteration}
In this section we discuss a self-adhesion operator which assigns a subframe $\ffam^{\sa}$ to an abstract CI frame $\ffam$, closed under copying and marginalization.
We prove the consistency results for such a construction,
present an algorithm to compute the respective $\ffam^{\sa}$-closure, and discuss the iterations of the self-adhesion operator.
\smallskip

Observe that copying commutes with marginalization:
given $\mod\in\sta(N)$, a set $L\subseteq N$, and a bijection $\bij:N\to M$, one has
$\bij(\mod)^{\downarrow \bij(L)}=\bij_{|L}(\mod^{\downarrow L})$.
Analogously, intersection commutes both with copying and marginalization:
given $\mod,\altmod\in\sta(N)$, a bijection $\bij:N\to M$ and a set $L\subseteq N$,
one has $\bij(\mod\cap\altmod)=\bij(\mod)\cap\bij(\altmod)$ and $(\mod\cap\altmod)^{\downarrow L}=
\mod^{\downarrow L}\cap \altmod^{\downarrow L}$.
\smallskip

\begin{lemma}\label{lem.iter-copy}\rm
Let $\ffam$ be a CI frame closed under copying and marginalization.\\[0.4ex]
Then $N\mapsto\ffam^{\sa}(N)$ defines a CI frame $\ffam^{\sa}$ which is also closed under copying and marginalization.
\begin{itemize}[leftmargin=2.5em, rightmargin=1em]
\item[(i)] If~$\ffam$ is, additionally, closed under intersection, then the same holds for $\ffam^{\sa}$.
\item[(ii)] If~$\ffam$ is, additionally, closed under lifting, then the same holds for $\ffam^{\sa}$.
\item[(iii)] If~$\ffam$ is, additionally, closed under intersection, lifting, and ascetic extension, then so is $\ffam^{\sa}$.
\end{itemize}
\end{lemma}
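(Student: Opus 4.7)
The plan is to verify that $\ffam^{\sa}$ is a CI~frame and then to establish the three additional closure properties in sequence, leveraging the remark after \Cref{def.frame-self-adhesion} that self-adhesivity is intrinsic, so it suffices to exhibit an adhesion for a single chosen bijection~$\bij$. Framehood $\sta(N) \in \ffam^{\sa}(N)$ follows because $\sta(NM) \in \ffam(NM)$ trivially adheres $\sta(N)$ to itself at any $L$. Closure of $\ffam^{\sa}$ under copying is obtained by transporting a given bijection $\bij': N' \to M'$ back via $\sigma^{-1}$, invoking self-adhesivity of $\mod$, and applying $\sigma$; closure under marginalization (to $K \subseteq N$) works by extending a target bijection $\bij: K \to M$ to $\hat\bij: N \to N'$ that renames $N \setminus K$ to fresh variables (so that $N \cap N' = L$), then marginalizing the resulting adhesion $\widetilde{\modext} \in \ffam(NN')$ to~$\sta(KM)$---the global CI restricts because independence of disjoint sets passes to sub-sets.

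For part \textbf{(i)} I will pair individual adhesions $\modext_1, \modext_2 \in \ffam(NM)$ at $L$ coming from $\mod_1, \mod_2 \in \ffam^{\sa}(N)$ and verify that $\modext_1 \cap \modext_2 \in \ffam(NM)$ (by intersection closure of $\ffam$) is an adhesion for $\mod_1 \cap \mod_2$, using that marginalization distributes over intersection and that the global CI is preserved under intersection. For part \textbf{(ii)}, setting $L_0 := L \cap N$, the restriction $\bij|_N: N \to \bij(N)$ satisfies $N \cap \bij(N) = L_0$ and fixes $L_0$, so self-adhesivity of $\mod$ at $L_0$ yields $\modext_0 \in \ffam(N\bij(N))$ whose lift $\modext := (\modext_0)_{N\bij(N)\lift OP} \in \ffam(OP)$ is the candidate adhesion for $\mod_{N\lift O}$. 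A straightforward unwinding of the lifting definition shows that the marginals equal $\mod_{N\lift O}$ and its $\bij$-copy, and the global CI at $L$ splits into two sub-cases: elementary statements with an endpoint outside $N\bij(N)$ are supplied by the lift automatically, while those with both endpoints inside inherit from the $L_0$-global CI of $\modext_0$.

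Part \textbf{(iii)} is the main obstacle, since intersection and lifting for $\ffam^{\sa}$ follow at once from (i) and (ii) but ascetic extension requires a new argument. The task is to show that $\mod \in \ffam^{\sa}(N)$ and $N \subseteq M$ imply $\mod \in \ffam^{\sa}(M)$; ascetic extension of $\ffam$ places $\mod \in \ffam(M)$, but verifying self-adhesivity at $L \subseteq M$ against $\bij: M \to M'$ demands a global CI over $MM'$ that involves the new variables in $M \setminus N$ and $M' \setminus \bij(N)$, whereas the hypothesis only produces an adhesion $\modext_0 \in \ffam(N\bij(N))$ at $L_0 := L \cap N$. I plan to construct the required adhesion by combining $\modext_0$ (ascetically extended to $MM'$) with the lift $(\modext_0)_{N\bij(N)\lift MM'}$, which already satisfies the full $L$-global CI but has too-large marginals $\mod_{N\lift M}$ and its analogue on $M'$, and then intersecting inside $\ffam(MM')$ with an appropriate restrictor model to shrink the marginals back down to $\mod$ and $\bij(\mod)$. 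The crucial geometric observation that makes this compatible is that every elementary statement required by $(M \setminus M') \ci (M' \setminus M) \mid L$ has one endpoint in $M \setminus M'$ and one in $M' \setminus M$ and therefore lies outside both $\sta(M)$ and $\sta(M')$, so the global CI cannot interfere with the marginal conditions. The hardest verification will be to exhibit the restrictor explicitly inside $\ffam(MM')$ so that intersection simultaneously retains every global CI statement and returns the correct marginals; for this I will use all three ambient closure properties of $\ffam$ together with \Cref{obs.model-lattice.4} to move $\ffam$-closures between variable sets without loss.
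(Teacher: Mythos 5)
Your treatment of framehood, copying, marginalization, part (i), and part (ii) is essentially sound and follows the same route as the paper; in fact your part (ii) is a slightly cleaner, uniform argument (lifting the adhesion $\modext_0\in\ffam(N\bij(N))$ obtained at $L_0:=L\cap N$ directly to $OP$, using $O\cap N\bij(N)=N$), whereas the paper reduces to one-variable lifts and splits into the cases $o\notin L$ and $o\in L$.

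Part (iii), however, has a genuine gap exactly where you flag ``the hardest verification.'' First, the specific combination you propose collapses: the ascetic extension of $\modext_0$ to $MM'$ is \emph{contained} in the lift $(\modext_0)_{N\bij(N)\lift MM'}$, so intersecting them returns $\modext_0$ itself, which misses every elementary statement of $[M\setminus M',\,M'\setminus M\,|\,L]$ that involves a variable of $MM'\setminus N\bij(N)$. Second, the ``restrictor'' is never exhibited, and its required properties (marginal on $M$ exactly $\mod$, marginal on $M'$ exactly $\bij(\mod)$, containing the global CI, lying in $\ffam(MM')$) make it essentially the adhesion you are trying to construct, so the plan as stated is circular. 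The paper sidesteps the general case entirely: since by (i) and (ii) the frame $\ffam^{\sa}$ is closed under intersection and under extension (lifting provides extensions), the exercise below Definition~\ref{def.operations} reduces ascetic extension to the single claim $\sta(N)\in\ffam^{\sa}(M)$, and for $\mod=\sta(N)$ the adhesion is built as $\modext_1\cap\modext_2$ with $\modext_1=\mod_{M\lift MO}$ and $\modext_2=\bij(\mod)_{O\lift MO}$, where the downward-closure property $ij|K\in\mod,\ \tilde K\subseteq K\Rightarrow ij|\tilde K\in\mod$ (special to $\sta(N)$) yields $\bij(\mod)\subseteq\modext_1$ and $\mod\subseteq\modext_2$ and hence the correct marginals. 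Your direct approach can in fact be repaired by taking the restrictor to be $\sta(N)_{M\lift MM'}\cap\sta(\bij(N))_{M'\lift MM'}$ and intersecting it with $(\modext_0)_{N\bij(N)\lift MM'}$ --- but that is precisely the paper's reduction in disguise, and it is the ingredient your proposal is missing.
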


\begin{proof}
It is easy to see that $\sta(N)\in\ffam^{\sa}(N)$ because $\sta(O)\in\ffam(O)$ for any $O\supseteq N$.
Hence, $N\mapsto\ffam^{\sa}(N)$ is an abstract CI frame in sense of Definition~\ref{def.elem-models}.

Consider $\mod\in\ffam^{\sa}(N)$ and a bijection $\bij:N\to M$. The goal is to show that
$\altmod:=\bij(\mod)\in\ffam^{\sa}(M)$. Take $L\subseteq M$ and choose a bijection $\altbij:M\to O$
such that $L=M\cap O$, $\altbij_{|L}=\id_{L}$, and ${(O\setminus L)\cap N=\emptyset}$.
By the remark n.~\ref{remarks:3.1} below Definition~\ref{def.frame-self-adhesion}, it is enough to show that
$\altmod$ and $\altbij(\altmod)$ have an adhesion in $\ffam(MO)$.
Put $K:=\bij_{-1}(L)\cup (O\setminus L)$ and define a bijection $\bijelse:N\to K$ by
$\bijelse(i):=i$ for $i\in\bij_{-1}(L)$ and $\bijelse(i):= \altbij(\bij(i))$ for $i\in N\setminus \bij_{-1}(L)$.
Since $\mod$ is self-adhesive at $\bij_{-1}(L)$, an extended model $\modext\in\ffam(NK)$ exists
with $\modext^{\downarrow N}=\mod$, $\modext^{\downarrow K}=\bijelse(\mod)$ and
$(N\setminus \bij_{-1}(L))\ci (K\setminus \bij_{-1}(L))\,|\, \bij_{-1}(L)\,\,[\modext]$.
Extend $\bij$ by putting $\bij(i):=i$ for $i\in K\setminus N=O\setminus L=O\setminus M$ and have $\bij:NK\to MO$.
Then $\bij(\modext)\in\ffam(MO)$ by the assumption on $\ffam$. The fact that copying commutes with marginalization
together with the identity $\bij_{|K}\circ\bijelse =\altbij\circ\bij_{|N}$ allows one to observe
$\bij(\modext)^{\downarrow M}=\altmod$, $\bij(\modext)^{\downarrow O}=\altbij(\altmod)$
and $(M\setminus L)\ci (O\setminus L)\,|\, L\,\,[\,\bij(\modext)\,]$.

Consider $\mod\in\ffam^{\sa}(N)$ and $M\subseteq N$. To show
$\mod^{\downarrow M}\in\ffam^{\sa}(M)$ take $L\subseteq M$ and choose a bijection $\altbij:M\to O$
with $L=N\cap O$ and $\altbij_{|L}=\id_{L}$.
Again, it is enough to show that $\mod^{\downarrow M}$ and $\altbij(\mod^{\downarrow M})$
have an adhesion in $\ffam(MO)$.
Choose a set $K$ with $K\cap NO=\emptyset$ and extend $\altbij$ to a bijection
from $N$ to $OK$. Since $\mod$ is self-adhesive at $L$, a model $\modext\in\ffam(NOK)$ exists
with $\modext^{\downarrow N}=\mod$, $\modext^{\downarrow OK}=\altbij(\mod)$ and
$(N\setminus L)\ci (OK\setminus L)\,|\, L\,\,[\modext]$. The assumption on $\ffam$ gives
$\modext^{\downarrow MO}\in\ffam(MO)$.
Clearly, $(\modext^{\downarrow MO})^{\downarrow M}=\modext^{\downarrow M}=(\modext^{\downarrow N})^{\downarrow M}=\mod^{\downarrow M}$,
and the fact that copying commutes with marginalization
allows one to observe $(\modext^{\downarrow MO})^{\downarrow O}=\altbij(\mod^{\downarrow M})$.
It is immediate that $(M\setminus L)\ci (O\setminus L)\,|\, L\,\,[\,\modext^{\downarrow MO}\,]$.

To verify (i) assume $\mod,\altmod\in\ffam^{\sa} (N)$ and $\bij:N\to M$ with $L:=N\cap\, M$ and
$\bij_{\,|L}=\id_{L}$. To show that $\mod\cap \altmod$ is self-adhesive at $L$
one finds the respective extensions $\mod\mapsto \modext_{\mod}$, $\altmod\mapsto \modext_{\altmod}$,
both in $\ffam(NM)$ and puts $\modext :=\modext_{\mod}\cap \modext_{\altmod}\in\ffam(NM)$, by the assumption on $\ffam$.
Since the intersection commutes both with marginalization and copying one gets
$\modext^{\downarrow N}=\mod\cap \altmod$ and $\modext^{\downarrow M}=\bij(\mod\cap \altmod)$.
The definition of $\modext$ also implies $(N\setminus M)\ci (M\setminus N)\,|\, L\,\,[\modext]$.

To verify (ii) assume $\mod\in\ffam^{\sa} (N)$ and, without loss of generality,
$O\supseteq N$ with $O\setminus N$ being a singleton~$o$. We distinguish two cases.
The first one is to show that $\mod_{N\lift O}$ is self-adhesive
at $L\subseteq N$ relative to $\ffam$. Consider a bijection
$\bij:O\to \tilde{M}$ with $L=O\cap\tilde{M}$ and $\bij_{\,|L}=\id_{L}$; put $M:=\tilde{M}\setminus \bij(o)$.
The assumption on $\mod$ implies the existence of $\modext\in\ffam(NM)$
with $\modext^{\downarrow N}=\mod$, $\modext^{\downarrow M}=\bij(\mod)$ and
$(N\setminus M)\ci (M\setminus N)\,|\, L\,\,[\modext]$. Then $O\tilde{M}$ is a disjoint union of
$NM$ and $o\bij(o)$ and we can put $\altmod:=\modext_{NM\lift O\tilde{M}}$, which
belongs to $\ffam(O\tilde{M})$, by the assumption on $\ffam$.
The definition of lifting and the fact that it commutes with copying allow one to observe
 $\altmod^{\downarrow O}=\mod_{N\lift O}$, $\altmod^{\downarrow \tilde{M}}=\bij(\mod_{N\lift O})$ and
$(O\setminus \tilde{M})\ci (\tilde{M}\setminus O)\,|\, L\,\,[\altmod]$.

The second case is to show that $\mod_{N\lift O}$ is self-adhesive at $Lo$ (for $L\subseteq N$) relative to $\ffam$. Consider a bijection $\bij:O\to \tilde{M}$ with $Lo:=O\cap\tilde{M}$ and $\bij$ identical on $Lo$\,; put $M:=\tilde{M}\setminus o$.
Choose an adhesion $\modext\in\ffam(NM)$ of $\mod$ and $\bij(\mod)$ (at $L$). As $O\tilde{M}=NMo$ put $\altmod:=\modext_{NM\lift O\tilde{M}}\in\ffam(O\tilde{M})$. Analogous arguments allow one to show that $\altmod$ is an adhesion of  $\mod_{N\lift O}$
and  $\bij(\mod_{N\lift O})$.

The verification of (iii) means, owing to (i) and (ii), to show
that, given $N\subseteq M$, one has $\ffam^{\sa}(N)\subseteq\ffam^{\sa}(M)$.
Since, however, we already know that $\ffam^{\sa}$ is closed under extension and intersection,
it reduces to showing that $\sta(N)\in\ffam^{\sa}(M)$ (see the comments below Definition~\ref{def.operations}).
Thus, we put $\mod:=\sta(N)$ and regard $\mod$ as a CI model over $M$. We know
that $\mod\in\ffam(M)$ by the assumption on $\ffam$. The goal is to verify that $\mod\in\ffam^{\sa}(M)$.
To this end consider $L\subseteq M$ and a bijection $\bij:M\to O$ with $L=M\cap O$ and $\bij_{\,|L}=\id_{L}$.
Let us put $\modext_{1}:=\mod_{M\lift MO}$ and note that $\modext_{1}\in\ffam(MO)$ by the assumption of that $\ffam$
is closed under lifting. We also have $\modext_{1}^{\downarrow M}=\mod$ and
$(M\setminus O)\ci (O\setminus M)\,|\, L\,\,[\modext_{1}]$. A special property of $\mod$, namely
$ij|K\in\mod,~ \tilde{K}\subseteq K ~\Rightarrow~ ij|\tilde{K}\in\mod$, together with the definition of
lifting, allows one to derive that $\bij(\mod)\subseteq\modext_{1}$. Analogously,
we put $\modext_{2}:=\bij(\mod)_{O\lift MO}$ and observe $\modext_{2}^{\downarrow O}=\bij(\mod)$,
$(M\setminus O)\ci (O\setminus M)\,|\, L\,\,[\modext_{2}]$ and $\mod\subseteq\modext_{2}$.
Since $\ffam$ is closed under intersection, $\modext:=\modext_{1}\cap\modext_{2}\in\ffam(MO)$ is an adhesion
of $\mod$ and $\bij(\mod)$.
\end{proof}

Note that the duality mapping commutes with copying: given $\mod\subseteq\sta(N)$
and a bijection $\bij:N\to M$, one has $\bij(\mod)^{\dual}=\bij(\mod^{\dual})$.
On the other hand, it does not commute with marginalization: take $N:=\{a,b,c\}$, $M:=\{a,b\}$, and
$\mod:=\{\,ab|\emptyset \,\}$, in which case $(\mod^{\downarrow M})^{\dual}=\mod$
while $(\mod^{\dual})^{\downarrow M}= \emptyset$. Therefore, a natural
question arised whether, given a CI frame $\ffam$ closed under duality, the frame $\ffam^{\sa}$ is
closed under duality too. The answer is no: both $\semgr^{\sa}$ (Example~\ref{exa.semg-non-dual}) and
$\strum^{\sa}$  (Example~\ref{exa.strum-non-dual}) are not closed
under duality. An analogous question
concerning the operation of tight replication remains open.

\begin{open}\label{open.sa-tight-repli}\rm
We would like to know whether either $\semgr$ or $\strum$ is an example of a CI frame $\ffam$
closed under tight replication whose self-adhesive shrinkage $\ffam^{\sa}$ is not closed under tight replication.
\end{open}

\smallskip
Thus, we know from the basic claim in Lemma~\ref{lem.iter-copy} that, if $\ffam$ is a CI frame
allowing to introduce self-adhesive models, then $\ffam^{\sa}$ is another
such a frame.

\begin{lemma}\label{lem.iteration}\rm
The self-adhesivity operator $\ffam\mapsto \ffam^{\sa}$, applied to
CI frames $\ffam$ closed under copying and marginalization, is
\begin{itemize}
\item {\em anti-extensive\/}: $\ffam^{\sa}\subseteq \ffam$, and
\item {\em isotone\/}: $\ffam_{1}\subseteq\ffam_{2}$ implies
$\ffam^{\sa}_{1}\subseteq\ffam^{\sa}_{2}$.
\end{itemize}
\end{lemma}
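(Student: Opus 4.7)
The plan is to verify both properties directly from Definition~\ref{def.frame-self-adhesion}, since neither requires any real construction beyond reusing the witnesses that come with the hypothesis.

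For \emph{anti-extensivity}, I would simply unpack the definition: by Definition~\ref{def.frame-self-adhesion}, the family $\ffam^{\sa}(N)$ consists of models $\mod \in \ffam(N)$ which are self-adhesive relative to $\ffam$, so the inclusion $\ffam^{\sa}(N) \subseteq \ffam(N)$ holds trivially for each finite $N$. Since this holds for every $N$, one obtains $\ffam^{\sa} \subseteq \ffam$ in the sense introduced after Definition~\ref{def.elem-models}. (By Lemma~\ref{lem.iter-copy}, $\ffam^{\sa}$ is itself a CI frame under the running assumptions, so the comparison is meaningful.)

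For \emph{isotonicity}, assume $\ffam_{1} \subseteq \ffam_{2}$, fix a variable set $N$, and pick $\mod \in \ffam_{1}^{\sa}(N)$. The plan is to show $\mod \in \ffam_{2}^{\sa}(N)$ by recycling the very same adhesions that witness self-adhesivity of $\mod$ relative to $\ffam_{1}$. First, $\mod \in \ffam_{1}(N) \subseteq \ffam_{2}(N)$ by assumption, so $\mod$ is eligible to lie in $\ffam_{2}^{\sa}(N)$. Second, for any $L \subseteq N$ and any bijection $\bij: N \to M$ with $L = N \cap M$ and $\bij_{|L} = \id_{L}$, self-adhesivity of $\mod$ relative to $\ffam_{1}$ yields an adhesion $\modext \in \ffam_{1}(NM)$ with $\modext^{\downarrow N} = \mod$, $\modext^{\downarrow M} = \bij(\mod)$, and $(N \setminus M) \ci (M \setminus N) \,|\, L \,\,[\modext]$. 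Since $\ffam_{1}(NM) \subseteq \ffam_{2}(NM)$, the same $\modext$ witnesses self-adhesivity of $\mod$ at $L$ relative to $\ffam_{2}$. As $L$ and $\bij$ were arbitrary, $\mod \in \ffam_{2}^{\sa}(N)$, and varying $N$ gives $\ffam_{1}^{\sa} \subseteq \ffam_{2}^{\sa}$.

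There is no real obstacle here: the argument turns on the fact that the existential clause in Definition~\ref{def.frame-self-adhesion} is monotone in the ambient frame, so enlarging $\ffam$ can only make more models self-adhesive. The only point worth flagging is that the definition quantifies over \emph{every} bijection $\bij$ and \emph{every} subset $L$, so one must verify the witness condition uniformly in $L$ and $\bij$; but since the witness for $\ffam_{1}$ transfers verbatim to $\ffam_{2}$, this uniformity is automatic.
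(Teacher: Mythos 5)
Your proposal is correct and matches the paper's approach: the paper simply notes that both properties are "easy to observe on basis of Definition~\ref{def.frame-self-adhesion}", and your argument — anti-extensivity by definition, isotonicity by transferring the same adhesion witnesses from $\ffam_{1}(NM)$ to the larger $\ffam_{2}(NM)$ — is exactly the intended elaboration.
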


\begin{proof}
This is easy to observe on basis of Definition~\ref{def.frame-self-adhesion}.
\end{proof}

\begin{cor}\label{cor.iteration}\rm
Let $\ffam$ be a CI frame closed under copying and marginalization
such that $\profam\subseteq\ffam$. Then $\ffam^{\sa}$ has the same properties and $\profam\subseteq\ffam^{\sa}\subseteq\ffam$.
\end{cor}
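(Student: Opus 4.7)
The plan is simply to assemble three previously established facts; there is no real obstacle, the corollary being a bookkeeping consequence of what has been proved.

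First, the claim that $\ffam^{\sa}$ is closed under copying and marginalization is exactly the basic (unnumbered) assertion of Lemma~\ref{lem.iter-copy}, which requires only that $\ffam$ itself be closed under these two operations. So the first sentence of the corollary follows immediately.

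Second, for the chain $\profam\subseteq\ffam^{\sa}\subseteq\ffam$, the right-hand inclusion is the anti-extensivity part of Lemma~\ref{lem.iteration}. For the left-hand inclusion, I would invoke isotonicity from the same lemma: since $\profam\subseteq\ffam$ by hypothesis, we get $\profam^{\sa}\subseteq\ffam^{\sa}$. But Lemma~\ref{lem.pr-self-adhesion} says that $\profam$ is self-adhesive, i.e.\ $\profam^{\sa}=\profam$, and combining these two yields $\profam\subseteq\ffam^{\sa}$. (Note that Lemma~\ref{lem.pr-self-adhesion} is applicable because, by Lemma~\ref{lem.pr-frame}, $\profam$ is itself closed under copying and marginalization, so $\profam^{\sa}$ is well-defined.)

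Putting the two inclusions together gives $\profam\subseteq\ffam^{\sa}\subseteq\ffam$, completing the proof. No new constructions or computations are needed beyond citing Lemmas~\ref{lem.pr-frame}, \ref{lem.pr-self-adhesion}, \ref{lem.iter-copy}, and \ref{lem.iteration}.
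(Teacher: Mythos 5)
Your proposal is correct and follows exactly the paper's own argument: the paper likewise cites the basic claim of Lemma~\ref{lem.iter-copy} for the closure properties of $\ffam^{\sa}$ and then writes $\profam=\profam^{\sa}\subseteq\ffam^{\sa}\subseteq\ffam$ using Lemmas~\ref{lem.pr-frame}, \ref{lem.pr-self-adhesion} and \ref{lem.iteration}. Your additional remark that Lemma~\ref{lem.pr-frame} guarantees $\profam^{\sa}$ is well-defined is a sensible, if implicit, detail of the same route.
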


\begin{proof}
Use the basic claim in Lemma~\ref{lem.iter-copy}. Then write
$\profam=\profam^{\sa}\subseteq \ffam^{\sa}\subseteq\ffam$ by
Lemmas~\ref{lem.pr-frame}, \ref{lem.pr-self-adhesion} and \ref{lem.iteration}.
\end{proof}

Thus, the new CI frame $\ffam^{\sa}$ yields a tighter approximation of $\profam$ than the
original frame $\ffam$.
If,~moreover, $\ffam$ is closed under intersection, then, by Lemma~\ref{lem.iter-copy}(i), $\ffam^{\sa}$ is also closed under intersection.
Therefore, by observations from Section~\ref{ssec.tutorial}, for any $N$, the family $\ffam^{\sa}(N)$ can be characterized in terms of
CI implications, namely the elements of the respective canonical implicational basis (see Definition~\ref{def.pseudo-closed}).
Elements of the canonical basis break down into permutational
equivalence classes and each of them is equivalent to a collections of so-called  Horn clauses \cite{Hor51}, which have singletons as consequents.
By Corollary~\ref{cor.iteration}, each of these valid implications for $\ffam^{\sa}(N)$ (see Section~\ref{ssec.frame-closure})
\begin{itemize}
\item is a valid probabilistic CI implication, that is, valid implication for $\profam(N)$, while
\item any valid implication for $\ffam(N)$ remains a valid implication for $\ffam^{\sa}(N)$.
\end{itemize}
If additional assumptions on $\ffam$ are accepted then, by utilizing the algorithm in Lemma~\ref{lem.self-adhesion-at-set} as a subroutine, we can compute the $\ffam^\sa$-closure of any model (see Definition~\ref{def.short-closure}).

\begin{lemma}\label{lem.self-adhesion-compute}\rm
Let $\ffam$ be a CI frame closed under copying, marginalization, intersection, lifting, and ascetic extension.
Then the $\ffam^\sa$-closure of any given $\mod\subseteq\sta(N)$ can be computed by the following procedure:
\begin{enumerate}
\item start with $\mod_0 := \mod$,
\item iteratively compute $\mod_{i+1} := \bigcup_{L \subseteq N} \ffam^\sa(\mod_i|L)$ for $i = 0, 1, \dots$,
\item if $\mod_{i+1} = \mod_i$, then output $\ffam^\sa(\mod) := \mod_i$ and stop.
\end{enumerate}
\end{lemma}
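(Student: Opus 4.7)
The plan is to verify three things in turn: that the iteration terminates, that its fixed point lies in $\ffam^\sa(N)$, and that this fixed point is actually the least element of $\ffam^\sa(N)$ containing $\mod$. The whole argument is a standard fixed-point iteration and relies on Lemma~\ref{lem.self-adhesion-at-set} (which guarantees that each $\ffam^\sa(\mod_i|L)$ is well-defined and lies in $\ffam(N)$ under the hypotheses) together with Lemma~\ref{lem.iter-copy}(iii) (which guarantees that $\ffam^\sa$ itself is closed under marginalization, intersection, and ascetic extension, so that $\ffam^\sa$-closures make sense by Definition~\ref{def.short-closure}).

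First I would prove termination. By definition of the self-adhesive closure at $L$, one has $\mod_i \subseteq \ffam^\sa(\mod_i|L)$ for every $L \subseteq N$; hence $\mod_i \subseteq \mod_{i+1}$. Since the chain $(\mod_i)_{i \geq 0}$ is monotone in the finite power set $\CC{P}(\sta(N))$, it stabilizes after finitely many steps. Let $i$ be the stabilization index and set $\modext := \mod_i$.

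Next I would show $\modext \in \ffam^\sa(N)$. Since $\mod_{i+1} = \mod_i$, for each $L \subseteq N$ one has $\ffam^\sa(\mod_i|L) \subseteq \mod_i$, and combining with the reverse inclusion one obtains $\ffam^\sa(\modext|L) = \modext$. In particular, $\modext \in \ffam(N)$ because $\ffam^\sa(\modext|L)$ is a model in $\ffam(N)$ by Lemma~\ref{lem.self-adhesion-at-set}, and by the observation just below Definition~\ref{def.frame-self-adhesion}, $\modext$ is self-adhesive at every $L \subseteq N$, hence self-adhesive. Thus $\modext \in \ffam^\sa(N)$.

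Finally I would prove minimality by induction. Let $\altmod \in \ffam^\sa(N)$ with $\mod \subseteq \altmod$; the claim is $\mod_j \subseteq \altmod$ for all $j$. The base case $j=0$ is just $\mod \subseteq \altmod$. For the step, assume $\mod_j \subseteq \altmod$. Fix any $L \subseteq N$. Since $\altmod$ is self-adhesive at $L$ and contains $\mod_j$, the defining minimality property of the self-adhesive closure at $L$ (Definition~\ref{def.frame-self-adhesion}) yields $\ffam^\sa(\mod_j|L) \subseteq \altmod$. Taking the union over all $L \subseteq N$ gives $\mod_{j+1} \subseteq \altmod$. Hence $\modext \subseteq \altmod$, which together with $\mod \subseteq \modext \in \ffam^\sa(N)$ identifies $\modext$ as the least element of $\ffam^\sa(N)$ containing $\mod$, namely $\ffam^\sa(\mod)$.

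There is no real obstacle here beyond keeping track of which frame each object lives in. The delicate point worth stressing is that intermediate unions $\mod_{i+1} = \bigcup_L \ffam^\sa(\mod_i|L)$ are a priori only subsets of $\sta(N)$ and need not belong to $\ffam(N)$; membership in $\ffam(N)$ (and then in $\ffam^\sa(N)$) is only recovered at the fixed point, where each summand of the union collapses to $\modext$ itself.
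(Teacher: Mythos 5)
Your proof is correct and follows essentially the same route as the paper: monotone iteration in a finite poset terminates, the fixed point satisfies $\ffam^{\sa}(\modext|L)=\modext$ for all $L$ and is therefore self-adhesive, and an induction shows every iterate stays below any self-adhesive model containing $\mod$ (the paper runs this induction against the $\ffam^{\sa}$-closure itself rather than an arbitrary $\altmod\in\ffam^{\sa}(N)$, a cosmetic difference). Your closing remark that the intermediate unions need not lie in $\ffam(N)$ is a correct and worthwhile observation.
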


\begin{proof}
Note that, by Lemma~\ref{lem.iter-copy}(iii), $\ffam^{\sa}$ is also closed under intersection and ascetic extension and the concept of $\ffam^{\sa}$-closure makes sense (see Definition~\ref{def.short-closure}).
An ascending chain of CI~models $\mod_0 \subseteq \cdots \subseteq \mod_i \subseteq \cdots \subseteq\sta(N)$ is obtained by the iteration, which must eventually stabilize, say at~$\mod_k$.  Since $\ffam^{\sa}(\mod)\in\ffam(N)$ is self-adhesive at
any $L\subseteq N$ relative to $\ffam$, $\mod_{i}\subseteq\ffam^{\sa}(\mod)$ implies,
by Definition~\ref{def.frame-self-adhesion}, $\ffam^{\sa}(\mod_i|L) \subseteq \ffam^{\sa}(\mod)$.
Thus, one can show by induction that $\mod_i \subseteq \ffam^{\sa}(\mod)$ at any stage of the algorithm. Since $\mod_k$ is a fixed point of the map $\mod \mapsto \bigcup_{L \subseteq N} \ffam^\sa(\mod|L)$, it is self-adhesive relative to~$\ffam$ at any subset $L\subseteq N$ (simultaneously), and, thus, it equals to the $\ffam^\sa$-closure of~$\mod$.
\end{proof}

The computation of $\mod_{i+1}$ in step 2 proceeds via Lemma~\ref{lem.self-adhesion-at-set} and requires one $\ffam$-closure computation over a ground set of size $2|N|-|L|$ for each $L \subseteq N$. We cannot provide an upper bound on the number of iterations until $\mod_i$ stabilizes. Matúš~\cite{MatusLongSemgr} gave an example of a model for which the computation of its $\semgr(N)$-closure using a similar iterative procedure requires an exponential number of iterations in~$|N|$. There is no reason to expect the algorithm in Lemma~\ref{lem.self-adhesion-compute} to have better worst-case complexity. %

We wish to know whether or not the self-adhesivity operator $\ffam\mapsto \ffam^{\sa}$
is {\em idempotent}, which means $\ffam^{\sa\sa}=\ffam^{\sa}$.\footnote{By Lemma~\ref{lem.iteration},
$\ffam\mapsto \ffam^{\sa}$ would then be interpretable as the ``interior'' operation, which is a kind of counterpart of the ``closure'' operation discussed in Section~\ref{ssec.tutorial}.}
Since $\semgr$ is closed under intersection, we
were able to use a refined version of the procedure
from Lemma~\ref{lem.self-adhesion-compute}
to compute the iterated self-adhesive models in
$\semgr^{\sa\sa}(N)$ in case $|N|=4$. The basic idea of the algorithm is described in \Cref{app.double-sa}. The result of our long computations was the conclusion that $\semgr^{\sa\sa}(N)=\semgr^{\sa}(N)$ if $|N|=4$.
An analogous question concerning the $\strum$-frame remains open.

\begin{open}\label{conj.idempotent}\rm
We would like to know whether $\mod\in \strum^{\sa}(N)\setminus \strum^{\sa\sa}(N)$ exists in case $|N|=4$.
\end{open}

\section{Details on computations}\label{sec.transition}
This section is devoted to the computational aspects of our paper: here we explain how to use SAT solvers to compute self-adhesions of suitable CI~frames.

\subsection{Summary from the theoretical part}
We first remind the reader which of our theoretical results from
previous sections are used to make the computations possible. We certainly utilized the algorithm to compute the canonical basis (Definition~\ref{def.pseudo-closed}) described in \Cref{app.canon-basis}, which works for
arbitrary Moore family ${\calF}$ of subsets of a finite set $X$.
Our algorithms are, however, tailored to compute the frame-based closure
$\ffam(\mod)$ of a CI-model $\mod$, where $\ffam$ is an appropriate abstract CI frame, as introduced in Section~\ref{ssec.frame-closure}.

In connection with our central concept of self-adhesivity, we proposed an algorithm to compute the self-adhesive closure $\ffam^{\sa}(\mod)$ of a CI model $\mod$ relative to the self-adhesive frame $\ffam^{\sa}$ (Lemma~\ref{lem.iter-copy}), which needs the implementation
of the basic $\ffam$-closure. The algorithm is described
in \Cref{app.double-sa} and its consistency follows from
Lemma~\ref{lem.self-adhesion-compute}. This global
algorithm utilizes as a subroutine the procedure from Lemma~\ref{lem.self-adhesion-at-set} to compute (conditioned) self-adhesive closure $\ffam^{\sa}(\mod|L)$ of $\mod\subseteq\sta(N)$ at a given subset of variables $L\subseteq N$.

\subsection{The SAT problem}

The Boolean {\em satisfiability problem\/} (SAT) deals with Boolean formulas. A~Boolean formula $\phi$ is an expression using Boolean variables $V_i$, $i \in X$, indexed by a finite set~$X$, and the Boolean operations of negation $\neg$, conjunction $\land$, and disjunction~$\lor$. A~mapping $a: X \to \{\texttt{true}, \texttt{false}\}$ is called an \emph{assignment} and it allows one to evaluate a given formula $\phi$ by replacing each variable $V_i$ by the Boolean value $a(i)$ and computing negations, conjunctions, and disjunctions, resulting in a Boolean value denoted by~$\phi(a)$. If $\phi(a)$ is $\texttt{true}$, then $a$ is called a {\em satisfying assignment}. Denote the set of all satisfying assignments of $\phi$ by~$\Sat(\phi)$.

Two Boolean formulas $\phi$ and $\phi'$ are {\em equivalent} if $\Sat(\phi) = \Sat(\phi')$. A~fundamental result on Boolean formulas states that, for every formula $\phi$, there is an equivalent formula $\phi'$ in {\em conjunctive normal form} (CNF). In this normal form, $\phi'$ is written as a conjunction of subformulas, called {\em clauses}. Each clause, in turn, is a disjunction of {\em literals}; and each literal is either a variable or a negated variable.

The SAT problem asks to decide whether a Boolean formula given in CNF has a satisfying assignment. In this form, the problem is famously NP-complete; see \cite[Section~2.3]{AroraBarak}. Variants of this problem include counting all satisfying assignments (referred to as \#SAT) and enumerating all satisfying assignments (AllSAT).
Remarkably efficient software for SAT, \#SAT and AllSAT exists and is constantly improving thanks to regular international competitions indexed at \url{https://www.satcompetition.org/}.

\subsection{Using SAT solvers for self-adhesion computation}
The SAT-based modelling is a powerful tool in combinatorics. Let $X$ be the index set for variables in a Boolean formula. An assignment $a$
may be identified with the subset $\{\, i \in X :\ a(i) = \texttt{true} \,\} \subseteq X$. The set of all satisfying assignments is then an element of\/ ${\cal P}({\cal P}(X))$. Choosing $X:=\sta(N)$, assignments are CI~models over~$N$ and the set of satisfying assignments potentially forms a CI~family.

By recognizing that a CI implication $\bigwedge_{i=1}^k p_i \Rightarrow q$ (where $p_1, \dots, p_k, q \in \sta(N)$) is nothing but a CNF clause $q \vee \neg p_1 \vee \dots \vee \neg p_k$, it becomes apparent that the definition of a semi-graphoid in Section~\ref{sec.sem-frame} for fixed $N$ is, in fact, a Boolean formula $\phi_N$ in CNF with $\Sat(\phi_N) = \semgr(N)$. The frame of structural semi-graphoids is not defined by means of inference rules but, for each finite $N$, the Guigues--Duquenne procedure from \Cref{app.canon-basis} may be used to derive such a defining Boolean formula in conjunctive normal form.

Fix a CI~family\/ $\ffam(N)$ over~$N$ which is closed under intersection and assume that $\phi$ is a Boolean formula in CNF such that $\Sat(\phi) = \ffam(N)$. We now show how the closure operator of\/ $\ffam(N)$ can be computed using SAT solvers. Let $\mod \subseteq \sta(N)$ be any set of CI~statements and we fix $ab|C \not\in \mod$. To test whether $ab|C \in \ffam(\mod)$, it suffices to test whether the CNF formula
\begin{equation}
  \label{eq:CIsat}
  \phi \wedge \bigwedge_{ij|K \in \mod} V_{ij|K} \wedge \neg V_{ab|C}
\end{equation}
is satisfiable. To see this, note that the satisfying assignments are precisely those elements of\/ $\ffam(N)$ which contain $\mod$ and not $ab|C$. If one such model exists, then $\ffam(\mod)$, being the intersection of all models containing $\mod$, does not contain $ab|C$; otherwise the closure must contain~$ab|C$ as all supersets of $\mod$ in $\ffam(N)$ contain it.

It is known from \cite{MatusLongSemgr} that even in the simple frame of semi-graphoids, deciding whether a CI~statement belongs to the closure of a given model may require a number of steps exponential in the ground set size. Despite this, SAT solvers perform exceedingly well in practice as inference engines for CI~implication. We employed the solver CaDiCaL \citesoft{CaDiCaL}, winner of the SAT Race 2019, which uses conflict-driven clause learning. This strategy makes it especially powerful in our setting, where the same formula has to be checked many times under different specializations of the variables (only varying the statement $ab|C$ in \eqref{eq:CIsat}).
%
\smallskip

It is evident from the above description that the closure operator of\/ $\ffam$ is at the core of all our computations related to the self-adhesion operator. We employ a state-of-the-art incremental SAT solver to make this computation as fast as possible; all other code is less critical and mainly organizes the high-level computation, determining which closures are needed and combining the results.

\subsection{Using LP solvers for self-adhesion computation}
Self-adhesivity of polymatroids (see later Definition~\ref{def.sa-polym}) has been used to derive further information inequalities (cf.~\Cref{ssec.self-adhesion,sec.entro-region}). This process follows the same pattern as our algorithms, except that Boolean formulas are replaced by polyhedral cones (whose elements are polymatroids) and, consequently, SAT solvers must be replaced by {\em linear programming\/} (LP) solvers. Such computations have been carried out at a large scale in \cite{Boe23}. The accompanying webpage
\begin{quote}
\url{https://mathrepo.mis.mpg.de/SelfadhesiveGaussianCI/}
\end{quote}
goes into more details about the time and memory demands of SAT versus LP solvers. In the computation of self-adhesions of orientable gaussoids (using a SAT solver) versus self-adhesions of structural gaussoids (using an LP solver), the SAT solver was on average 5 times faster even though it had to deal with formulas with twice as many variables as the LP solver.

However, this comparison is unfair as the SAT solver was integrated into the process and designed to work incrementally, while the LP solver was an external program which had to be called repeatedly. About two thirds of the time it takes to compute the self-adhesive closure using the LP solver was reportedly spent on writing the linear program and starting the external LP solver. Compared to the abundance of excellent SAT solvers, it is more difficult to find a fast LP solver which is freely available software, easy to integrate into domain-specific large-scale computations and supports exact rational arithmetic which is necessary to guarantee the robustness of our results. Our computations also did not exploit recent advances in pre-processing of linear programs \cite{GuoYeungGao}.

\section{Computational results for small numbers of variables}\label{sec.catalogues}
In this section we present the results of our computations.
Our implementation relies on the following software packages: the SCIP optimization suite \citesoft{SCIP2018} and its LP solver SoPlex \citesoft{Soplex2015}; discrete geometry software normaliz \citesoft{Normaliz}; SAT solvers CaDiCaL \citesoft{CaDiCaL}, GANAK \citesoft{GANAK} and \texttt{nbc\_minisat\_all}~\citesoft{TodaSAT}; the CInet framework \citesoft{CInet}; as well as Mathematica \citesoft{Mathematica}. More details are available on our supplementary website \url{https://cinet.link/data/selfadhe-lattices}.
We also give a few instructive examples obtained as their results.
A summary of numerical data about the lattices we consider in case $|N|=4$ is available in \Cref{app.table}.

\subsection{Semi-graphoids over 4 variables}\label{ssec.semgr-4}
Let us start our overview by discussing the family of semi-graphoids $\semgr(N)$ in case $|N|=4$. One has $26\,424$ semi-graphoids over any 4-element variable set and they form 1512 permutational equivalence classes.
The lattice $(\semgr(N),\subseteq )$ has in this case 181 (meet) irreducible elements which fall into 20
permutational types.
On the other hand, it only has 37 coatoms falling down into 10 types; thus, the lattice is not coatomistic.
In particular, the description of $(\semgr(N),\subseteq )$ in terms (of types) of irreducible models is relatively
complicated.

\smallskip
On the other hand, the description of the lattice $(\semgr(N),\subseteq )$ in terms of implicational generators is very easy.
The canonical implicational basis ${\dv G}_{\calF}$ for $\calF=\semgr(N)$ in case $|N|=4$ consists of two
permutational types only, namely
$$
\{\, ij|\emptyset,\, i\ell |j \,\} \to \{\, i\ell|\emptyset,\, ij|\ell \,\} ~~\mbox{and}~~
\{\, ij|k,\, i\ell |jk \,\} \to \{\, i\ell|k,\, ij|k\ell \,\}\,.
$$
Thus, the lattice $(\semgr(N),\subseteq )$ is implicatively perfect. This is not a surprise because the semi-graphoidal frame has, by its definition, a finite ``axiomatic'' characterization. Informally, by such a characterization of a frame $\ffam$ we mean that there is
a finite number of schemes
which, for each $|N|$, yield
an implicational generator for the  $\ffam$-closure operation.

\subsection{Structural semi-graphoids over 4 variables}\label{ssec.strum-4}
The family $\strum(N)$ in case $|N|=4$ contains $22\,108$ models, which fall down into $1285$ permutational equivalence classes. The lattice  $(\strum(N),\subseteq)$ is known to be coatomistic for any $N$. Moreover, in the discussed case $|N|=4$, it has the same 37 coatoms as the semi-graphoidal lattice $(\semgr(N),\subseteq )$. Thus, the description of the family  $\strum(N)$ in terms of irreducible models is much easier in this case than the description of $\semgr(N)$ in these~terms.

As concerns the description of the lattice $(\strum(N),\subseteq )$ in terms of implicational generators, we
computed the canonical implicational basis ${\dv G}_{\calF}$ for $\calF=\strum(N)$ in case $|N|=4$.
What we found out is that it decomposes into 7 permutational types. More specifically, it contains two
semi-graphoidal types mentioned in Section~\ref{ssec.semgr-4} and five additional permutational types listed below:

\begin{itemize}
\item[(E:1)] $\{\, ij|k,\, ik|\ell,\, i\ell|j \,\}  \,\to\, \{\, ij|\ell,\, ik|j,\, i\ell|k \,\}$,
\item[(E:2)] $\{\, ij|k,\, i\ell|j,\, jk|\ell,\,  k\ell|i  \,\}  \,\to\, \{\, ij|\ell,\, i\ell|k,\, jk|i,\, k\ell|j  \,\}$,
\item[(E:3)] $\{\, ij|k\ell,\, ik|\emptyset,\, j\ell|\emptyset,\,  k\ell|ij  \,\}  \,\to\,
\{\, ij|\emptyset,\, ik|j\ell,\, j\ell|ik,\, k\ell|\emptyset  \,\}$,
\item[(E:4)] $\{\, ij|\emptyset,\, ij|k\ell,\, k\ell|i,\,  k\ell|j  \,\}  \,\to\,
\{\, ij|k,\, ij|\ell,\, k\ell|\emptyset,\, k\ell|ij \,\}$,
\item[(E:5)]    $\{\, ij|k,\, jk|i\ell,\, i\ell|j,\,  k\ell|\emptyset  \,\}  \,\to\,
\{\, ij|k\ell,\, jk|i,\, i\ell|\emptyset,\, k\ell|j \,\}$.
\end{itemize}
Note that our results agree with the analysis presented in \cite[\S\,V.A]{Stu21}, or with the rules A3--A7 from \cite{Stu94IJGS}. In particular, the lattice
$(\strum(N),\subseteq)$ is implicatively perfect in case $|N|=4$.
Moreover, both the semi-graphoidal rules and the rules (E:1)--(E:5) can be generalized to any higher cardinality than $|N|\geq 4$
\cite{Bolt21, Bol23}.

\subsection{Self-adhesive semi-graphoids over 4 variables}\label{ssec.sa-semgr-4}
The family $\semgr^{\sa}(N)$ in case $|N|=4$ contains $23\,190$ models, which fall down into $1352$ permutational equivalence classes. The number of (meet) irreducible elements in this lattice $(\semgr^{\sa}(N),\subseteq)$ is 385 and they break into 29 permutational types. Note that, in comparison with $\semgr(N)$, the lattice is smaller but the number of its irreducible elements is higher.

\smallskip
The lattice $(\semgr^{\sa}(N),\subseteq)$
has 31 coatoms in this case, which break into 9 types. Therefore, it is not coatomistic. There is only
one permutational type of semi-graphoidal
coatoms which is not self-adhesive.
It leads to an example of a CI model
in $\strum(N)\setminus\semgr^{\sa}(N)$, the largest one in case $|N|=4$.

\begin{example}\label{exa.vamosi}\rm (\,$\strum(N)\setminus \semgr^{\sa}(N)\neq\emptyset$\,)\\
Consider $N:=\{a,b,c,d\}$ and put
$$
\mod:=\{\,\, ab|c,\,  ab|d,\, ab|cd,\,\,\,  ac|b,\,  ad|b,\,  bc|a,\, bd|a,\,\,\,
cd|\emptyset,\,  cd|a,\, cd|b
\,\,\}\,.
$$
This model $\mod$ is a coatom both in $(\semgr(N),\subseteq)$ and $(\strum(N),\subseteq)$ (which elements coincide in case $|N|=4$). Nevertheless, $\mod\not\in\semgr^{\sa}(N)$. To~observe this we show below that $\mod$ is not self-adhesive at $L:=bd$ relative to $\semgr$. We apply Lemma~\ref{lem.self-adhesion-at-set} with $\ffam:=\semgr$ for this purpose. Fix a bijection $\bij:N\to M:=\{\bar{a},b,\bar{c},d\}$, where $\bij(a):=\bar{a}$, $\bij(b):=b$, $\bij(c):=\bar{c}$, and $\bij(d):=d$. To show that $bc|\emptyset\in \semgr^{\sa}(\mod|L)\setminus\mod$ it is enough to verify
that $bc|\emptyset\in\semgr^{\sa}(\altmod|L)$ for a subset $\altmod:=\{\, ab|d,\, ad|b,\,  bc|a,\, bd|a,\, cd|\emptyset \,\}$ of $\mod$.

By Lemma~\ref{lem.self-adhesion-at-set}, this means that $bc|\emptyset$ belongs to the semi-graphoidal closure
of the union of $\altmod$, $\bij(\altmod)$,
and $[ac,\bar{a}\bar{c}|bd]$.
The following is one of possible
semi-graphoidal derivation sequences:
\begin{enumerate}
\item  $ad|b\in\altmod, [a,\bar{a}|bd] ~\to~ ad|\bar{a}b ~\&~ bd|\bar{a}\in\bij(\altmod)
 ~\to~ ad|\bar{a}$,
\item $ab|d\in\altmod, [a,\bar{a}\bar{c}|bd] ~\to~
[a,b\bar{c}|\bar{a}d] ~\&~ ad|\bar{a}\,\mbox{(1.)} ~\to~ ab|\bar{a}\bar{c}$,
\item $\bar{a}d|b\in\bij(\altmod), [\bar{a},ac|bd] ~\to~
\bar{a}c|ab ~\&~ bc|a\in\altmod  ~\to~ bc|a\bar{a}$,
\item $\bar{a}b|d\in\bij(\altmod), [\bar{a},c|bd] ~\to~
\bar{a}c|d ~\&~ cd|\emptyset\in\altmod  ~\to~ \bar{a}c|\emptyset$,
\item $b\bar{c}|\bar{a}\in\bij(\altmod) ~\&~ ab|\bar{a}\bar{c}\,\mbox{(2.)} ~\to~
ab|\bar{a} ~\&~ bc|a\bar{a}\,\mbox{(3.)} ~\to~ bc|\bar{a} ~\&~ \bar{a}c|\emptyset\,\mbox{(4.)}
~\to~ bc|\emptyset$.
\end{enumerate}
Remark that the derivation like
$ab|d ~\&~ [a,\bar{a}\bar{c}|bd] ~\to~
[a,b\bar{c}|\bar{a}d]$ is based on the application of semi-graphoidal inference in its global mode (see Definition~\ref{def.global-CI}). In this case, this is a shorthand
for two-stage derivation:
$$
[\, a\ci b\,|\,d ~\&~ a\ci \bar{a}\bar{c}\,|\,bd\,]
~\rightarrow~ a\ci b\bar{a}\bar{c}\,|\,d
~\rightarrow~ a\ci b\bar{c}\,|\,\bar{a}d\,.
$$
Note that the only set $L$ at which
the model $\mod$ is self-adhesive is $cd$.
\end{example}

Of course, the derivation sequence from Example~\ref{exa.vamosi} also confirms that the considered semi-graphoid $\altmod$ is not self-adhesive at $L=bd$. This is a kind of a certificate (= a direct proof) of the validity of a later implicational rule (SI:3) for self-adhesive
semi-graphoids; take $i:=b$, $j:=a$, $k:=d$, and $\ell:=c$ in (SI:3).

The simplest example of a self-adhesive semi-graphoid that is not structural is given in Example~\ref{exa.sg-not-pr}. Its minor modification below shows that $\semgr^{\sa}$ is not closed under duality, analogously to the $\profam$-frame (see Example~\ref{exa.non-dual}).

\begin{example}\label{exa.semg-non-dual}\rm (\,$\semgr^{\sa}(N)\setminus\strum(N)\neq\emptyset$,\, $\semgr^{\sa}$ is not closed under duality\,)\\
Put $N:=\{a,b,c,d\}$ and consider the CI model $\mod:=\{\, ab|c,\, ac|d,\, ad|b,\, bc|d \,\}$.
It appears to be a self-adhesive semi-graphoid which is not structural: apply the rule (E:1) presented in Section~\ref{ssec.strum-4} with $i:=a$, $j:=b$, $k:=c$, and $\ell:=d$. Nevertheless,
its dual model $\mod^{\dual}=\{\, ab|d,\, ac|b,\, ad|c,\, bc|a \,\}$ is not self-adhesive. This follows from the below-mentioned property (SE:1) of self-adhesive semi-graphoids, where $i:=a$, $j:=c$, $k:=b$, and $\ell:=d$. Note that $\mod,\mod^{\dual}\not\in\strum(N)$ and, thus, they are not probabilistic CI models.
\end{example}
\smallskip

Note in this context that, if $|N|=4$ then 48 permutational types of models $\mod\in\semgr^{\sa}(N)$ with $\mod^{\dual}\not\in\semgr^{\sa}(N)$ exist.
\smallskip

\smallskip
We also computed the canonical implicational basis ${\dv G}_{\calF}$ for $\calF=\semgr^{\sa}(N)$ in case $|N|=4$.
It~decomposes into 9 permutational types: besides two semi-graphoidal types mentioned in Section~\ref{ssec.semgr-4} it has 7 additional types and we divided them into two subgroups. The implications in the first subgroup can be interpreted as weaker versions of probabilistic {\em equivalences} from  \cite[\S\,V.A]{Stu21}, recalled in Section~\ref{ssec.strum-4}, and, therefore, we use the label SE to denote them:
\begin{itemize}
\item[(SE:1)] $\{\, ij|k,\, ik|\ell,\, jk|i,\, i\ell|j \,\}  \,\to\, \{\, ij|\ell,\, ik|j,\, i\ell|k \,\}$,
\item[(SE:2)]    $\{\, ij|k,\, ij|k\ell,\, ik|j,\, jk|i,\, jk|i\ell,\,
i\ell|\emptyset,\, i\ell|j,\,  k\ell|\emptyset  \,\}  \,\to\,
\{\, k\ell|j \,\}$.
\end{itemize}
Indeed, the implication (SE:1) weakens the property (E:1) %
with an additional antecedent $jk|i$, while the rule (SE:2) weakens the rule (E:5): the 4 additional antecedents are $ij|k\ell$, $ik|j$, $jk|i$, and $i\ell|\emptyset$.

The implications in the second subgroup can be interpreted as weaker versions of probabilistic {\em implications} presented in \cite[\S\,V.B]{Stu21}, for which reason we use the label SI to denote them:
\begin{itemize}
\item[(SI:1)] $\{\,ij|k, ik|j, jk|i, jk|\ell, i\ell|j\,\} \to \{\,ij|\ell\,\}$,
\item[(SI:2)] $\{\,ij|k, ik|j, jk|i, jk|i\ell, i\ell|j\,\} \to \{\,ij|k\ell\,\}$,
\item[(SI:3)] $\{\,ij|k, ik|j, jk|i, i\ell|j, k\ell|\emptyset\,\} \to \{\,i\ell|\emptyset\,\}$,
\item[(SI:4)] $\{\,ij|k, ik|j, jk|i, i\ell|j, k\ell|i\,\} \to \{\,i\ell|k\,\}$,
\item[(SI:5)] $\{\,ij|k, ij|\ell, ik|j, jk|i, k\ell|\emptyset\,\} \to \{\,ij|\emptyset, ik|\emptyset, jk|\emptyset \,\}$.
\end{itemize}
This is a comparison with the implications from \cite[\S\,V.B]{Stu21}:
\begin{itemize}
\item remove the antecedent $jk|i$ in (SI:1)
to get (I:10) or $ik|j$ to get (I:11),
\item remove $jk|i$ in (SI:2)
to get (I:17) or $ik|j$ to get (I:18),
\item remove $ik|j$ in (SI:3)
to get (I:5) or $jk|i$ to get (I:6),
\item remove $ik|j$ in (SI:4)
to get (I:8) or $jk|i$ to get (I:12),
\item remove $jk|i$ and $jk|\emptyset$ in (SI:5)
to get (I:2) or $ij|k$ and $ij|\emptyset$ to get (I:4).
\end{itemize}
The result of our analysis was also the observation that the lattice
$(\semgr^{\sa}(N),\subseteq)$ is {\bf not} implicatively perfect in case $|N|=4$.
Indeed, the implication (SE:2) belongs to its canonical implicational basis but it is not perfect: its antecedent set is the union of CI statements from rules (SI:2) and (SI:3).

As illustrated by Example~\ref{exa.impli-basis}, the rule (SE:2) can further be simplified:
\begin{itemize}
\item[\mbox{$\overline{\mbox{(SE:2)}}$}]    $\{\, ij|k,\, ik|j,\, jk|i,\, jk|i\ell,\,  i\ell|j,\,  k\ell|\emptyset  \,\}  \,\to\,
\{\, k\ell|j \,\}$.
\end{itemize}

We also confirmed independently the validity of the rules (SE:1)--(SE:2) and (SI:1)--(SI:5) within the frame $\semgr^{\sa}$ by finding their certificates
in the form of semi-graphoidal derivation sequences, as illustrated in Example~\ref{exa.vamosi}. Specifically,
we found out that the antecedent set in (SE:1) is not self-adhesive at sets $ij$ and $jk$, while the antecedent sets in (SE:2) and in the rules labelled by SI are not self-adhesive at the sets $ij$, $ik$, and $jk$.

\subsection{Self-adhesive structural semi-graphoids: 4 variables}\label{ssec.sa-strum-4}
The family $\strum^{\sa}(N)$ in case $|N|=4$ contains $20\,968$ models, falling down into $1224$ permutational equivalence classes. The number of (meet) irreducible elements in the lattice $(\strum^{\sa}(N),\subseteq)$ is 85 and they break into 13 permutational types, presented in the form of a concise catalogue
in \Cref{app.sa-strum-irred}.

The same phenomenon was observed as in Section~\ref{ssec.sa-semgr-4}: in comparison with $(\strum(N),\subseteq)$, the number of elements of the lattice decreases while the number of its irreducible elements increases.
The set of coatoms of $(\strum^{\sa}(N),\subseteq)$ is the same as in case of $(\semgr^{\sa}(N),\subseteq)$, namely 31 of them falling into 9 permutational types. In particular, the lattice  $(\strum^{\sa}(N),\subseteq)$  is not coatomistic for $|N|=4$.

We also computed the canonical implicational basis ${\dv G}_{\calF}$ for $\calF=\strum^{\sa}(N)$ in case $|N|=4$.
It~decomposes into 12 permutational types: it consists of two semi-graphoidal types mentioned in Section~\ref{ssec.semgr-4}, five types (E:1)--(E:5) from Section~\ref{ssec.strum-4} and five types
(SI:1)--(SI:5) presented in Section~\ref{ssec.sa-semgr-4}.
It makes no problem to observe that all these implications are perfect, implying that the lattice $(\strum^{\sa}(N),\subseteq)$ is implicatively perfect in case $|N|=4$.

Our computation also revealed 30 types of models
$\mod\in\strum^{\sa}(N)$ such that
$\mod^{\dual}\not\in\strum^{\sa}(N)$. One of these examples
is as follows.

\begin{example}\label{exa.strum-non-dual}\rm (\,$\strum^{\sa}$ is not closed under duality\,)\\
Put  $N:=\{a,b,c,d\}$. The model $\mod:=\{\, ab|d, ac|d, ad|b, bc|\emptyset, bc|d  \,\}$ belongs to $\strum^{\sa}(N)$ while its dual model not: $\mod^{\dual}:=\{\, ab|c, ac|b, ad|c, bc|a, bc|ad \,\}\not\in\strum^{\sa}(N)$. The latter
fact follows from the rule (SI:2) above where $i:=a$, $j:=c$, $k:=b$, and $\ell:=d$.
\end{example}

\begin{figure}[t]
\centering
\includegraphics[scale=0.6]{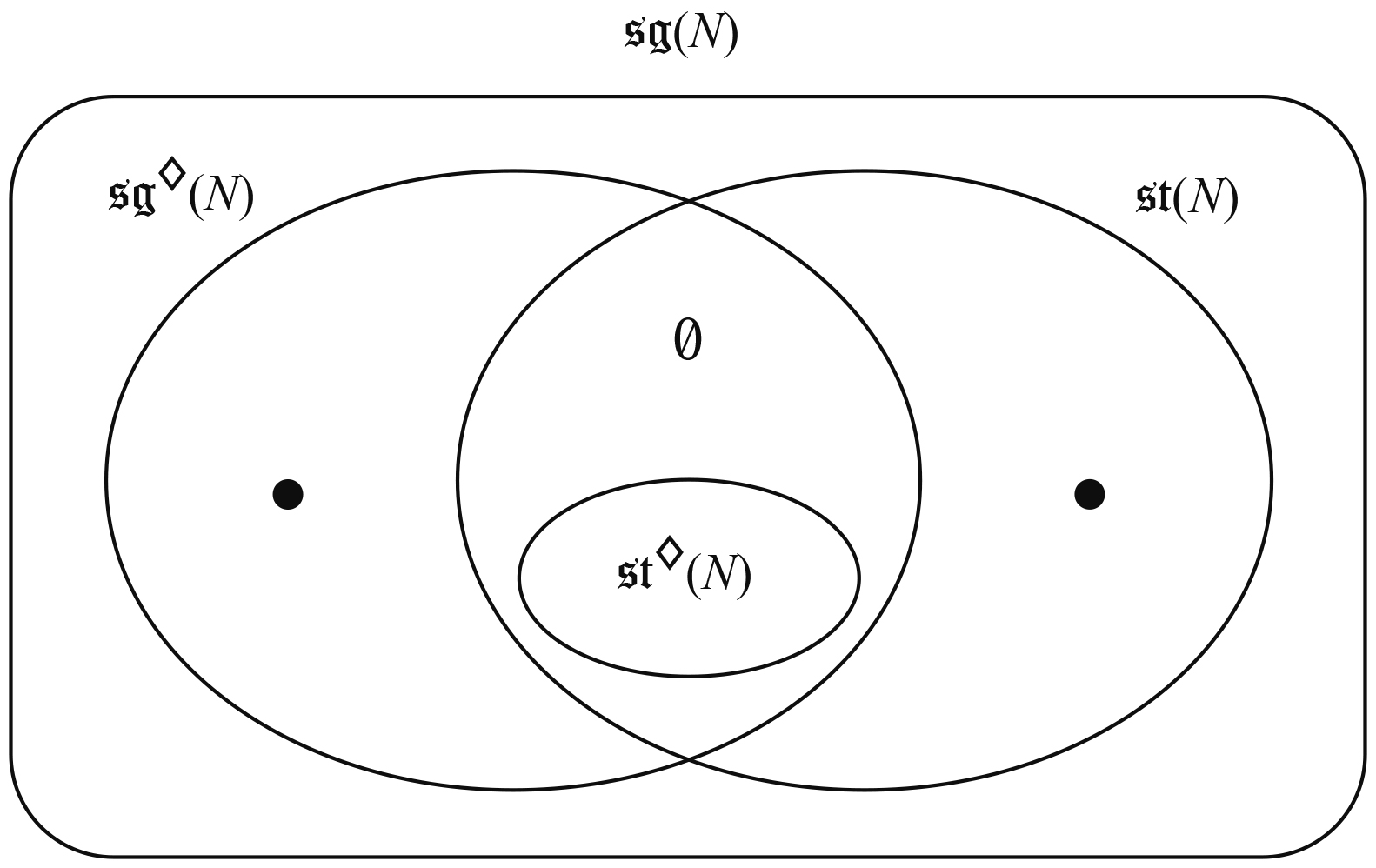}
\caption{Relation between considered families of CI models in case $|N|=4$.}
\label{fig4}
\end{figure}

A kind of surprise for us was the observation that $\semgr^{\sa}(N)\cap \strum(N)=\strum^{\sa}(N)$ if $|N|=4$
(see Figure \ref{fig4} for illustration); thus, self-adhesivity relative to $\semgr$ and relative to $\strum$
yield the same restriction. Therefore, we have a natural question.

\begin{open}\label{que.enough-sg-4}\rm
We would like to know whether the relation $\semgr^{\sa}(N)\cap \strum(N)=\strum^{\sa}(N)$ holds in general.
\end{open}

\subsection{Graphoids, compositional semi-graphoids, and compositional graphoids: 4 variables}\label{ssec.grap-4}

A result of our computations was the finding
that, in case $|N|=4$, the family $\gra(N)$ contains 6482 elements, which fall down into 421 permutational types. The number of (meet) irreducible models in $\gra(N)$ is 408 and they decompose into 32 types. The lattice  $(\gra(N),\subseteq)$ has 14 coatoms of 4 types; thus, it is not coatomistic. The family $\gra^\sa(N)$ in case $|N| = 4$ coincides with $\gra(N)$.
Note that we also found that, in case $|N| = 4$, precisely $369$ permutational types of graphoids are probabilistically representable.

Since, for any $N$, $(\gra^{\dual}(N),\subseteq)$ is order-isomorphic to $(\gra(N),\subseteq)$, the numbers of elements, irreducible elements, and coatoms of $(\gra^{\dual}(N),\subseteq)$ are the same as in case of graphoids. What is, however, different is that $\gra^\dual(N) \not= (\gra^\dual)^\sa(N)$ in case $|N| = 4$. The lattice $(\gra^\dual)^\sa(N)$ has only 6182 (instead of 6482) elements in 404 permutational types. It has 14 coatoms in 4 types and 428 irreducible elements falling to 33 types. This is the first concrete example of a frame $\ffam$
such that both $\ffam$ and $\ffam^{\dual}$ admit self-adhesion and
$(\ffam^\dual)^\sa \not= (\ffam^\sa)^\dual$, i.e., the operations of duality and self-adhesion on a CI~frame do not commute in general.

The family $\cogr(N)=\gra(N)\cap\gra^{\dual}(N)$ has 2084
elements in case $|N|=4$ which fall into 157 types. It has 470 irreducible models of 30 types and 6 coatoms of 1 type. In particular, the lattice is not coatomistic. Of course,
$\cogr(N)\subseteq\gra(N)\subseteq \semgr^{\sa}(N)$ in this case. Like $\gra(N)$ in case $|N| = 4$, $\cogr(N) = \cogr^\sa(N)$.

\subsection{The case of 5 variables}\label{ssec.5-variab}
We had the following special goal in this case. There are $1319$ permutational types of coatoms of the lattice $(\strum(N),\subseteq)$ of structural semi-graphoids over 5 variables.
Note that the overall number of coatoms of this lattice is $117\,978$.

In general, the coatoms of this lattice correspond to the extreme rays of the cone of standardized supermodular functions, that is, to supermodular functions $m:{\cal P}(N)\to {\dv R}$ satisfying $m(K)=0$ if $K\subseteq N$, $|K|\leq 1$, see
\cite[Corollary\,30]{SK16}. Thus, the coatoms
for $|N|=5$ were obtained by computing the extreme rays of that cone \cite{SBK00}.
We raised a couple of questions:
\begin{itemize}
\item Which of the permutational types belong to $\semgr^{\sa}(N)$?
\item Which of them belong to $\strum^{\sa}(N)$?
\end{itemize}
Surprising answers were that only 154 of those types belong to $\semgr^{\sa}(N)$ and these coincide with the types belonging to $\strum^{\sa}(N)$. This observation seems to support the conjecture indicated in Open problem~\ref{que.enough-sg-4}. Consequences of those results are discussed in detail
in Section~\ref{sec.entro-region}.

\section{Application to entropic region delimitation}\label{sec.entro-region}
Given a discrete random vector
$\bxi=[\xi_{i}]_{i\in N}$ over a variable set $N$
and $I\subseteq N$,
the (Shannon's) {\em entropy\/} (of its sub-vector) for $I$ is the number
$$
h_{\ibxi}(I) \,:=\,
\sum_{x\,\in \,\isfX_{N}:\, p_{N}(x)>0}\,
p_{N}(x)\cdot \log \left(\frac{1}{p_{I}(x)}\right)\qquad
\mbox{\rm (see Section~\ref{ssec.CI-families})}.
$$
Well-known facts are that
the {\em entropy function}
$h_{\ibxi}:{\cal P}(N)\to {\dv R}$ is submodular, non-decreasing with respect to inclusion, and
satisfies $h_{\ibxi}(\emptyset)=0$.
Thus, in terms of
Section~\ref{sec.struc-sem}, it is (the rank function of) a polymatroid over $N$.

A polymatroid $h\in {\dv R}^{{\cal P}(N)}$ is then called {\em entropic} if  $h=h_{\ibxi}$ for some discrete random
vector $\bxi$ over $N$. It is called {\em almost entropic} if it is a limit of entropic polymatroids (in the Euclidean
topology on ${\dv R}^{{\cal P}(N)}$). The {\em entropic region}, also named the {\em entropy region}, is the set of all
entropic polymatroids and the {\em almost entropic region}
is the set of all almost entropic polymatroids; the latter set is known to be a closed cone in ${\dv R}^{{\cal P}(N)}$ \cite[\S\,15.1]{Yeu08}.

{\em Information inequalities}, that is, linear inequalities
valid for vectors in the (almost) entropic region, are one of the research topics in information theory. In this paper we discuss {\em unconstrained\/} information inequalities, valid for almost entropic vectors.
Besides classic Shannon's inequalities, defining
the cone of polymatroids, a number of independent such inequalities
has been found; the first of them in 1998 \cite{ZY98}. The techniques to
derive these inequalities follow the scheme of the so-called
{\em copy lemma} \cite{DFZ11} and
inspired the concept of a self-adhesive polymatroid by Mat\'{u}\v{s} \cite{Mat07DM}, recalled below in analogy with Definition~\ref{def.frame-self-adhesion}.

\begin{definition}[self-adhesive polymatroid]\label{def.sa-polym}~\rm \\[0.3ex]
Given a bijection $\bij: N\to M$,
the {\em $\bij$-copy} of a polymatroid $h\in {\dv R}^{{\cal P}(N)}$ is
the polymatroid $h\circ\bij_{-1}\in {\dv R}^{{\cal P}(M)}$.
A pair of sets $N,M$ is called {\em modular} in a  polymatroid $\hat{h}\in {\dv R}^{{\cal P}(NM)}$ if\/ $\hat{h}(N)+\hat{h}(M)=\hat{h}(NM)+\hat{h}(N\cap M)$.
A polymatroid $\hat{h}\in {\dv R}^{{\cal P}(NM)}$ extends
$h\in {\dv R}^{{\cal P}(N)}$ if $h$ is the restriction of $\hat{h}$ to ${\cal P}(N)$. %
We say that a polymatroid $h\in {\dv R}^{{\cal P}(N)}$ is {\em self-adhesive at a set $L\subseteq N$\/} if,

\vspace*{-6mm}
\begin{eqnarray*}
\lefteqn{\hspace*{0mm}\forall\, \bij:N\to M ~\mbox{bijection with $L=N\cap M$ and $\bij_{\,|L}=\id_{L}$~}}\\
 && \exists\,\,
 \mbox{a polymatroid $\hat{h}\in {\dv R}^{{\cal P}(NM)}$  extending both $h$ and its $\bij$-copy}\\
 && ~~ \mbox{such that $N,M$ is a modular pair in $\hat{h}$}\,.
\end{eqnarray*}

\vspace*{-2mm}
\noindent A polymatroid over $N$ is {\em self-adhesive} if it is
self-adhesive at every $L\subseteq N$.
\end{definition}

Recall from
Section~\ref{sec.struc-sem} that a polymatroid $h\in {\dv R}^{{\cal P}(N)}$
induces a CI model
$\mod\subseteq\sta(N)$ as follows:
$ij|K\in\mod$ if and only if $\Delta h (ij|K)=0$.

\begin{lemma}\label{lem.sa-polyma}\rm
The following statements are true.
\begin{itemize}
\item[(a)] The set of self-adhesive polymatroids is a polyhedral
cone in ${\dv R}^{{\cal P}(N)}$.
\item[(b)] Every almost entropic polymatroid is self-adhesive.
\item[(c)]
Every self-adhesive polymatroid $h$ over $N$
induces a CI model in $\strum^{\sa}(N)$.
\end{itemize}
\end{lemma}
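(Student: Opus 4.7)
The plan is to address the three parts in sequence, with most of the technical content concentrated in part~(c).

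For~(a), the strategy is to express the set of self-adhesive polymatroids over $N$ as a finite intersection of coordinate projections of polyhedral cones. For each fixed $L \subseteq N$, I would first reduce to a single reference bijection $\bij_L : N \to M$: given a witness $\hat{h}$ for $\bij_L$ and any other admissible bijection $\altbij : N \to M'$, pushing $\hat{h}$ forward along the bijection $NM \to NM'$ that is the identity on $N$ and agrees with $\altbij \circ \bij_L^{-1}$ on $M \setminus L$ yields a witness for~$\altbij$; this is the polymatroidal counterpart of the first remark following Definition~\ref{def.frame-self-adhesion}. With $\bij_L$ fixed, self-adhesion at~$L$ is carved out in $\mathbb{R}^{{\cal P}(NM)}$ by the polymatroid inequalities, the linear extension equations for $\hat{h}|_{{\cal P}(N)}$ and $\hat{h}|_{{\cal P}(M)}$, and the single modularity equation $\hat{h}(N)+\hat{h}(M)=\hat{h}(NM)+\hat{h}(L)$; the coordinate projection of this polyhedral cone onto $\mathbb{R}^{{\cal P}(N)}$ is polyhedral by Fourier--Motzkin elimination, and intersecting the resulting cones over all $L \subseteq N$ completes the argument.

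For~(b), I would first verify that every entropic polymatroid is self-adhesive and then conclude using~(a). Given $h = h_{\ibxi}$, $L \subseteq N$, and an admissible $\bij : N \to M$, I would copy the conditional-product construction from the proof of Lemma~\ref{lem.pr-self-adhesion}: take $\boeta$ to be the $\bij$-copy of $\bxi$ and build the joint $\bzeta$ over $NM$ with $\bzeta_N = \bxi$, $\bzeta_M = \boeta$ and $(N \setminus L) \CIperp (M \setminus L) \mid L \,\,[\bzeta]$. Then $\hat{h} := h_{\ibzeta}$ is a polymatroid extending $h$ and its $\bij$-copy, and the chain rule of entropy converts this probabilistic CI into precisely the required modular identity. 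Since by~(a) the set of self-adhesive polymatroids is a polyhedral, hence topologically closed, cone that contains the entropic region, it automatically contains its closure, the almost entropic region.

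For~(c), given $h$ self-adhesive and $L \subseteq N$, pick a witness $\hat{h} \in \mathbb{R}^{{\cal P}(NM)}$ and set $\modext := \mod^{\hat{h}}$. Since $\hat{h}$ is a polymatroid one has $\modext \in \strum(NM)$ (apply Definition~\ref{def.supermodular} to the supermodular function~$-\hat{h}$), and the extension equations give $\modext^{\downarrow N} = \mod^{h}$ and $\modext^{\downarrow M} = \bij(\mod^{h})$. The key remaining step is to derive the global statement $(N \setminus L) \CIperp (M \setminus L) \mid L \,\,[\modext]$ from the single identity $\Delta \hat{h}\,\langle N \setminus L, M \setminus L \mid L \rangle = 0$ encoded by the modular pair condition. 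My plan is to decompose this global supermodular expression as a sum of elementary expressions $\Delta \hat{h}(ij \mid L')$ with $i \in N \setminus L$, $j \in M \setminus L$ and $L \subseteq L' \subseteq NM \setminus ij$, using iterated applications of the telescoping identity
\begin{equation*}
  \Delta \hat{h}\,\langle I, J \mid K \rangle \,=\, \Delta \hat{h}\,\langle I \setminus i, J \mid K \rangle + \Delta \hat{h}\,\langle i, J \mid (I \setminus i) K \rangle
\end{equation*}
(and its symmetric variant). Each elementary term is non-positive by submodularity of~$\hat{h}$ and the total vanishes, so each term must vanish individually; running through enough different orderings of the elements of $N \setminus L$ and $M \setminus L$ ensures that every admissible conditioning set $L'$ appears in at least one such decomposition, which yields the full content of Definition~\ref{def.global-CI}. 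Hence $\modext$ is an adhesion of $\mod^{h}$ and $\bij(\mod^{h})$ lying in $\strum(NM)$; varying $L$ then gives $\mod^{h} \in \strum^{\sa}(N)$. The main obstacle of the whole argument is this telescoping decomposition, the supermodular counterpart of \cite[Lemma~2.2]{Stu05} referenced in Section~\ref{ssec.CI-families}: the orderings of variables have to be chosen carefully so that every required $L'$ is covered. The remainder is bookkeeping, built on standard Fourier--Motzkin for~(a) and on Lemma~\ref{lem.pr-self-adhesion} together with part~(a) for~(b).
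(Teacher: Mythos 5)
Your proposal is correct and follows essentially the same route as the paper's proof: part (a) via projection of the polyhedral cone of witnesses $\hat{h}$ and closure of polyhedral cones under projections and finite intersections, part (b) via the conditional-product construction for entropic polymatroids plus topological closedness from (a), and part (c) by observing that the witness $\hat{h}$ induces the required adhesion in $\strum(NM)$. The only difference is that you spell out details the paper leaves implicit -- notably the telescoping decomposition of $\Delta\hat{h}\,\langle N\setminus L, M\setminus L \mid L\rangle$ into sign-definite elementary terms, which is exactly what is behind the paper's ``straightforward to verify'' in part (c), and your argument there is sound.
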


\begin{proof}
To show (a) is true, consider a set $L\subseteq N$ and the respective bijection $\bij:N\to M$. Realize that the set of polymatroids $\hat{h}\in {\dv R}^{{\cal P}(NM)}$
in which $N,M$ is a modular pair and that satisfy $\hat{h}(A)=\hat{h}(\bij(A))$ for $A\subseteq N$ is a polyhedral cone. A polymatroid $h\in {\dv R}^{{\cal P}(N)}$ is self-adhesive at $L$ if and only if it belongs the projection of that set of vectors $\hat{h}$ from ${\dv R}^{{\cal P}(NM)}$  to ${\dv R}^{{\cal P}(N)}$. %
Basic facts from polyhedral geometry are that
the class of polyhedral cones is closed under projections and
finite intersections. The application of these facts yields (a).

For (b) we first realize that any entropic polymatroid is self-adhesive. The arguments for this from \cite[Corollary\,1]{Mat07DM} are analogous to those in our proof of Lemma~\ref{lem.pr-self-adhesion}. Then (a) implies that set of self-adhesive
polymatroids over $N$ is closed, which forces that it contains the closure of the entropic region.

As concerns (c), recall from the remark n.~\ref{remarks:2.1} in the end of Section~\ref{sec.struc-sem} that  $\mod\in\strum(N)$ if and only if it is induced by a polymatroid \mbox{over $N$}.
Given $\mod\subseteq\sta(N)$ induced by a self-adhesive
polymatroid $h\in {\dv R}^{{\cal P}(N)}$, one needs to observe that $\mod$ is self-adhesive at (any) $L\subseteq N$ relative to $\strum$.
Having fixed the corresponding bijection $\bij:N\to M$, take the
polymatroid $\hat{h}\in {\dv R}^{{\cal P}(NM)}$ from
Definition~\ref{def.sa-polym} and it is straightforward to
verify that it induces $\hat{\mod}\in\strum(NM)$ which is
an adhesive extension of $\mod$ and $\bij(\mod)$.
\end{proof}

Lemma~\ref{lem.sa-polyma} has the following consequence.

\begin{cor}\label{cor.ray-separ}\rm
If a polymatroid $h$ over $N$ induces a CI model $\mod$ outside $\strum^{\sa}(N)$ then there is a valid (unconstrained) information inequality which is not true for $h$ and none of positive multiples of $h$ is almost entropic.
\end{cor}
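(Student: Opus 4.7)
The plan is to combine the three parts of Lemma~\ref{lem.sa-polyma} with a separating hyperplane argument. First I would note that for any $\lambda > 0$ the polymatroid $\lambda h$ induces exactly the same CI model as $h$, because $\Delta(\lambda h)(ij|K) = \lambda \cdot \Delta h(ij|K)$ vanishes if and only if $\Delta h(ij|K)$ vanishes. Therefore the whole open ray $\{\lambda h : \lambda > 0\}$ induces the model $\mod \notin \strum^{\sa}(N)$.

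Next, by the contrapositive of Lemma~\ref{lem.sa-polyma}(c), none of these polymatroids is self-adhesive; and by the contrapositive of Lemma~\ref{lem.sa-polyma}(b), none of them is almost entropic. This already establishes the second conclusion of the corollary.

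For the first conclusion, I would invoke Lemma~\ref{lem.sa-polyma}(a): the set $C$ of self-adhesive polymatroids over $N$ is a polyhedral cone in $\BB{R}^{{\cal P}(N)}$. Being polyhedral, $C$ is the intersection of finitely many closed half-spaces $\{v : \langle a_t, v \rangle \geq 0\}$ for some vectors $a_1,\ldots,a_r$. Since $h \notin C$, at least one of these defining inequalities $\langle a_t, h\rangle \geq 0$ is violated by $h$. I take this $a_t$ and rewrite it as an information inequality $\langle a_t, \cdot\rangle \geq 0$. By Lemma~\ref{lem.sa-polyma}(b), every almost entropic polymatroid lies in $C$ and hence satisfies $\langle a_t, \cdot\rangle \geq 0$, so this is a valid unconstrained information inequality, while $\langle a_t, h\rangle < 0$ witnesses the failure on $h$.

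The argument is essentially a bookkeeping consolidation of the three items of Lemma~\ref{lem.sa-polyma}, so the only mild subtlety is the appeal to polyhedral separation (as opposed to a general Hahn--Banach separation): this is what guarantees that the separating hyperplane has the form of a single linear inequality coming from a finite defining system of $C$, which is exactly what ``a valid information inequality'' should mean. No~other obstacle is expected.
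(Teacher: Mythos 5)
Your proof is correct and follows essentially the same route as the paper, whose one-line argument ("any facet-defining inequality for the cone of self-adhesive polymatroids is a valid information inequality") is exactly the polyhedral-separation step you spell out, combined with the three parts of Lemma~\ref{lem.sa-polyma}. Your additional observation that $\Delta(\lambda h) = \lambda\,\Delta h$, so every positive multiple of $h$ induces the same model $\mod$, is a clean way to get the second conclusion (one could equally note that the self-adhesive cone is closed under positive scaling).
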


\begin{proof}
Realize that any facet-defining inequality for the cone of self-adhesive polymatroids is a valid information inequality.
\end{proof}

To interpret the results from Section~\ref{ssec.5-variab} we recall some geometric facts. A polymatroid $h$ over $N$, $|N|\geq 1$, is called {\em tight} if $h(N)=h(N\setminus i)$ for any $i\in N$
and {\em modular} if $\Delta h (ij|K)=0$ for any $ij|K\in\sta(N)$.
Every polymatroid $h$ then decomposes into its modular part and its tight part
$\tilde{h}$ inducing the same CI model as $h$.
Moreover, there exists a linear one-to-one mapping between tight polymatroids
and standardized supermodular functions which preserves the induced CI model, see \cite[formulas (32) in \S\,7.2]{SK16}.
This allows one to observe, using Corollary~30 in
\cite[Appendix\,B]{SK16}, that a polymatroid $h$ over $N$
generates a non-modular (= tight) extreme ray of the polymatroidal cone if and only if
it induces a coatom in the lattice $(\strum(N),\subseteq)$.

Thus, in light of Corollary~\ref{cor.ray-separ}, the computational results from Section~\ref{ssec.5-variab} imply that most of the $117\,978$ extreme rays of the cone of tight polymatroids over a 5-element set are outside the almost entropic region.
\smallskip

Being motivated by this perspective, we put (for every variable set $N$):
\begin{eqnarray*}
\lefteqn{\overline{\profam}(N) ~:=~ \{\,
\mod\subseteq\sta (N)\, :\ \mbox{$\mod$ is induced by}}\\
&&\hspace*{36mm}\mbox{an almost entropic polymatroid over $N$}\,\}\,,
\end{eqnarray*}
which defines the {\em almost entropic\/} CI frame $\overline{\profam}$. It seems to coincide with
the ``approximate probabilistic frame" suggested by one of the reviewers: one has $\mod\in\overline{\profam}(N)$ if
$\exists\,\delta>0~~\forall\, 0<\varepsilon<\delta$ there exists a discrete random vector $\bxi(\varepsilon)$ over $N$ such that $|\Delta h_{\ibxi(\varepsilon)} (ij|K)|<\varepsilon$ for $ij|K\in\mod$ and
$|\Delta h_{\ibxi(\varepsilon)} (ij|K)|>\delta$ for $ij|K\in\sta(N)\setminus\mod$.

\begin{open}\label{task.almost-etropic}\rm
We would like to know whether one has $\profam=\overline{\profam}$ or not.
\end{open}

It is easy to see that the frame
$\overline{\profam}$ is closed under copying, marginalization, intersection, and lifting.
Since
$\sta(N)\in\profam(M)\subseteq\overline{\profam}(M)$ if $N\subseteq M$, it is also closed under ascetic extension. Therefore, in light of Definition~\ref{def.short-closure},
Open problem~\ref{task.almost-etropic} is equivalent to the question whether the probabilistic CI implication on $\sta(N)$ coincides with the (almost entropic) $\overline{\profam}$-closure operation (for any $N$). This was basically a question raised in a private communication by our colleague
Rostislav Matveev. We incline to believe that there are valid
CI implications which are not true in the frame $\overline{\profam}$.

The suspected probabilistic CI rules from \cite[\S\,V.B]{Stu21} which may appear to be examples of invalid implications in the $\overline{\profam}$-frame are the rules (I:1), (I:7), (I:13), and (I:15). The reason for this guess is that to derive these implications one inevitably needs the first conditional Ingleton inequality  \cite{ZY97} and this particular conditional inequality need not hold for almost entropic vectors \cite[\S\,V.1]{KR13}.
\medskip

\noindent {\em Remarks.}
Let us add three comments.
\begin{enumerate}[leftmargin=2em, rightmargin=1em]
\item It follows from Corollary~\ref{cor.ray-separ} that any CI implication
valid in $\strum^{\sa}$-frame can be viewed as a consequence of an unconstrained information inequality.
Note, however, that {\em conditional information inequalities} are also studied in information theory, whose validity is restricted to and guaranteed for entropic vectors only \cite{KR13}. All the so-far known probabilistic CI implications were derived from such conditional inequalities \cite{Stu21}.
\item One of the reviewers raised the question of how many of the remaining 154 types are known to be entropic for sure. The best lower bound we are aware of is provided by {\em linearly representable} polymatroids which are known to be entropic. The cone generated by linear $5$-polymatroids was characterized in \cite{DFZ10} both in terms of linear inequalities and in terms of its extreme rays: they computed 7943 of such extreme rays falling into 162 permutational types. Of these 162 types exactly 138 coincide with extreme rays of the polymatroidal cone. Thus there are only 16 extreme rays of the $5$-polymatroid cone for which it is unknown whether they are entropic or not. We provide the corresponding coatoms of $\strum^\sa(5)$ on our supplementary materials website.
\item
Progress on the remaining 16 types can be made using stronger probabilistic rules from \cite[\S\,V.B]{Stu21}. This is indeed the case: we found further 11 types which cannot be probabilistic
because their induced CI models $\mod$ do not satisfy the (extended version of the) probabilistic rule (I:2) saying that
$$
ij|M,\, ik|\ell M,\, k\ell|iM,\, k\ell|jM\,\in\mod
~\Rightarrow ~ ik|M,\, k\ell|M\,\in\mod\,.
$$
But only 5 of these 11 types can reliably be excluded from being almost entropic as they do not satisfy
the above implication with $M=\emptyset$; these finer conclusions follow from the (method of) rule derivation in \cite[\S\,V.B]{Stu21}. We leave this observation for future work.
\end{enumerate}
\section{Conclusions}\label{sec.additional}
In this paper, we adapted the self-adhesivity concept, introduced originally in the context of polymatroids, to abstract CI models and explored its algebraic and order-theoretic properties. Our intention was to use existing axioms for probabilistic CI to derive stronger axioms through self-adhesion. Unlike a former paper \cite{Boe23}, in which this strategy was applied to gaussoids, this paper emphasized the CI frames which are closed under set-theoretical intersection. Consequent lattice structure of families of models is the key to our computational advances. In particular, using state-of-the-art SAT solvers \citesoft{CaDiCaL}, we were easily able to compute $\semgr^\sa(N), \strum^\sa(N), \gra^\sa(N)$ and $\cogr^\sa(N)$ for $|N| \le 4$ and to derive axiomatizations for each of these families using the techniques from the formal concept analysis. On the basis of these axioms, we were able to compare the power of our approach with earlier work on CI axioms. In the context of information theory, we were able to exclude a large proportion (almost 90\%) of permutational types of extreme rays of the polymatroidal cone for $|N|=5$ from being almost entropic. These facts can in turn motivate new unconditional information inequalities.
\smallskip

It was also demonstrated in \cite{Boe23} that computations with gaussoids over 5 variables are still possible, for the number of gaussoids $60\,212\,776 \approx 60 \times 10^6$ \cite{BDKS19} is still manageable.
As concerns the semi-graphoids, based on a few runs of a probabilistic model counter \citesoft{GANAK} with error probability $\delta = 0.05$ we conjecture $|\semgr(5)| = 12\,144\,387\,289\,848 \approx 12 \times 10^{12}$ which rules out exhaustive computations on the set of $5$-semigraphoids.\footnote{The exact number of 5-semigraphoids remains unknown because the exact model counters quickly run out of memory on the axioms of 5-semigraphoids.}
Despite that, we believe that the coatoms and irreducible elements of this lattice can be obtained by a clever arrangement of the computations and their self-adhesivity status would be valuable information in the delimitation of~$\profam(5)$.
Thus, the following remains our goal.

\begin{open}\label{task.five-semigraphoids}\rm
We would like to find all coatoms and irreducible elements of $(\semgr(N),\subseteq)$ in case  $|N|=5$.
\end{open}

Note in this context that, according to a counter-example from \cite{HMSSW08CPC}, in case $|N|=5$, coatoms
of $(\semgr(N),\subseteq)$ exist which are outside $\strum(N)$.
\medskip

We also verified computationally that $\semgr^\sa(N) = \semgr^{\sa\sa}(N)$ in case \mbox{$|N|=4$}. Computing this ``second-order self-adhesion'' required a significant  implementation effort and took much longer than any of our other computations: while the results on single self-adhesions documented in \Cref{app.table} took roughly 18 minutes in total to compute, the double self-adhesion $\semgr^{\sa\sa}(4)$ required 6 hours. To explain this sudden increase, recall from \Cref{app.double-sa} that computing the closure operator with respect to $\ffam^{\sa\sa}(N)$ requires the closure for $\ffam^{\sa}(M)$ for $|N| \le |M| \le 2|N|-2$, which in turn is computed using the closure operators $\ffam(O)$ for $|M| \le |O| \le 2|M|-2$. In the case of semi-graphoids over $|N|=4$, this requires semi-graphoid closures over ground sets of size up to~$10$. At the maximum $|O|=10$, the corresponding Boolean formula used to compute closures has 184\,320 clauses in 11\,520 variables. %
We expect that $\semgr^\sa(N) = \semgr^{\sa\sa}(N)$ does not hold for $|N|=5$, but an exhaustive computation is out of our reach.

\smallskip

In this context, it may be worthwhile to study in future some more advanced notions of self-adhesivity, which allow for a different growth in the number of ground set elements over which the closures have to be computed than iterated self-adhesion (which doubles the number of elements each time). Such notions were proposed, again in the context of polymatroids, by Csirmaz \cite{Csi14IEEE}, who called the linear information inequalities they implied \emph{book inequalities}. Formally, one can fix any $k \ge 1$ and take a CI~model $\mod \in \ffam(N)$ and $L \subseteq N$. One asks whether there exists a sequence $\mod_0:=\mod, \mod_1, \dots, \mod_k$ where $\mod_i$ on ground set $N_i$ is an $L$-copy of $\mod_0$ and $N_i \cap N_j = L$ for all pairs $i\not= j$ and such that there exists $\overline{\mod} \in \ffam\left(\bigcup_{i=0}^k N_i \right)$ which restricts to $\mod_i$ on $N_i$ and satisfies $N_{i_1}\dots N_{i_p} \ci N_{j_1}\dots N_{j_q} \,|\, L \,\, [\overline{\mod}]$ for any pair of disjoint subsets $\{\,i_1, \dots, i_p\,\}$ and $\{\,j_1, \dots, j_q\,\}$ of~$\{\,0, \dots, k\,\}$. For $k=1$, one recovers our notion of self-adhesivity.
The computational advantages of CI frames whose CI families are lattices can also be exploited in this setting, and the analogue to Matúš's fundamental result \cite{Mat07DM} on the closedness of $\profam$ under such generalized self-adhesivity follows from the same ideas as presented in our proof of Lemma~\ref{lem.pr-self-adhesion}.
\smallskip

We end with an open question about finite axiomatizability. We have seen in Section~\ref{ssec.semgr-4} that the semi-graphoidal frame $\semgr$ is special in the sense that the canonical implicational basis for $(\semgr(N),\subseteq )$ for any $|N|$ follows one scheme only. One can interpret this by saying that semi-graphoids have a finite ``axiomatization'' (with a single axiom).

\begin{open}\label{que.sa-semgr-axiomatize}\rm
We would like to know whether
$\semgr^{\sa}$ admits a finite ``axiomatic'' characterization, too.
\end{open}

\subsubsection*{Acknowledgements}
{\small Most of this work was inspired by the research of our late colleague Fero Mat\'{u}\v{s}.
Tobias Boege was partially supported by the Academy of Finland grant number 323416 and by the Wallenberg Autonomous Systems and Software Program (WASP) funded by the Knut and Alice Wallenberg Foundation.
The authors discussed in person the topic of this paper at two meetings in 2022, one of them was the DAGSTUHL seminar 22301 ``{\em Algorithmic Aspects of Information Theory}''.
We also thank both reviewers of DAM journal for the valuable comments
which helped to improve the quality of presentation of our results.}

\bibliographystyle{tboege}
\bibliography{refs.bib}
\medskip

\nocitesoft{SCIP2018,Soplex2015,Normaliz,CInet,CaDiCaL,GANAK,TodaSAT,Mathematica}
\bibliographystylesoft{tboege}
\bibliographysoft{refs.bib}

\appendix

\section{Algorithm to compute the canonical basis}\label{app.canon-basis}

This is a pseudo-code of the algorithm we used to obtain the canonical generator ${\dv G}_{\calF}$
on basis of a Moore family $\calF$ of subsets of a finite set $X$ or its closure operation $\cl_{\calF}$:
\begin{enumerate}[itemsep=0.5em, parsep=0pt]
\item choose the maximal parameter $m$, $-1\leq m\leq|X|$, such that any set $Y\subseteq X$ with $|Y|\leq m$ is closed
(that is, $Y\in\calF$);
put ${\dv G}:=\emptyset\subseteq {\cal P}(X)\times {\cal P}(X)$,
\item unless $m=|X|$ raise $m:=m+1$; if $m=|X|$ then
put ${\dv G}_{\calF}:={\dv G}$ and exit,
\item test $Y\subseteq X$ with $|Y|=m$ whether they are pseudo-closed (Definition~\ref{def.pseudo-closed});\footnote{Note that pseudo-closed strict subsets of $Y$ are encoded in current version of ${\dv G}$.}\\
enlarge ${\dv G}:= {\dv G}\cup \{\, (Y\to\cl_{\cal F}(Y)\setminus Y)\,:\ Y\in \wp(\calF), \, |Y|=m\,\}$,
\item then compute  ${\dv G}^{\triangleright}$ and branch:
\begin{itemize}[leftmargin=2em, rightmargin=1em]
\item[(a)] if $|{\dv G}^{\triangleright}|>|\calF|$ then go to 2,
\item[(b)] if $|{\dv G}^{\triangleright}|=|\calF|$ then ${\dv G}_{\calF}:={\dv G}$
and exit.
\end{itemize}
\end{enumerate}
Note that in Step 1, when ${\cal F}=\ffam(N)$
for a suitable CI family $\ffam$, one typically
has $m=1$.  By~the construction of ${\dv G}$, we know that ${\dv G}\subseteq {\calF}^{\triangleleft}$
and ${\calF}\subseteq{\dv G}^{\triangleright}$ in every phase of the algorithm.
In~Step 4(b), we are sure that ${\dv G}$ is the implicational generator of ${\cal F}$ by the criterion
mentioned above Example~\ref{exa.impli-gen}.
\medskip

What is described above is just one of basic versions of the procedure to compute the canonical basis. There are surely various more refined and efficient versions of this procedure \cite[\S\,3.3.2]{GO16}; the paper \cite{DS11DAM} was particularly devoted to the comparison of computational complexities of such algorithms.

\section{Algorithm to test iterated self-adhesivity}\label{app.double-sa}
Consider a CI frame $\ffam$ closed under copying, marginalization, intersection, lifting,
tight replication, and ascetic extension.

The basis of the algorithm is a subroutine to compute the $\ffam^{\sa}$-closure of a given model $\hat{\altmod}\in\ffam(M)$, which is a modification of the procedure in Lemma~\ref{lem.self-adhesion-compute}.

\noindent Subroutine  $\ffam^{\sa}(\hat{\altmod})$ [the input is $\hat{\altmod}\in\ffam(M)$]
\begin{enumerate}[itemsep=0em]
\item put $\altmod:=\hat{\altmod}$
\item apply to $\altmod$ the following cycle governed by $L\subseteq M$, $|L|=2,\ldots, |M|-2$:
\begin{itemize}[leftmargin=2em, rightmargin=1em, itemsep=0em, parsep=0.3em]
\item[(a)] compute the self-adhesive closure $\ffam^{\sa}(\altmod|L)$ as described in Lemma~\ref{lem.self-adhesion-at-set},
\item[(b)] test whether $\ffam^{\sa}(\altmod|L)\setminus\altmod\neq\emptyset$\,:
    \begin{itemize}[leftmargin=2em, rightmargin=1em, parsep=0pt]
    \item if yes then put  $\altmod:=\ffam^{\sa}(\altmod|L)$, exit the cycle, and go to 2,
    \item if not then continue within the cycle with the next set $L$,
    \end{itemize}
\end{itemize}
\item put $\ffam^{\sa}(\hat{\altmod}):=\altmod$ and exit.
\end{enumerate}

Note that in Step 2, by Corollary~\ref{cor.lift-self-adhes-01} and
Lemma~\ref{lem.par-ext-n-1}, we need not test sets $L$ of cardinalities $0,1,|M|-1$, and $|M|$. As concerns updating of $\altmod$ in Step~2(b), by Definition~\ref{def.frame-self-adhesion}, $\ffam^{\sa}(\altmod|L)\in\ffam(M)$.

The main procedure is analogous and differs in small details only. The main difference is that
Lemma~\ref{lem.self-adhesion-at-set} is this time applied to $\ffam^{\sa}$ instead of $\ffam$.
Note that this modification is possible thanks to Lemma~\ref{lem.iter-copy}(iii).

\noindent Main procedure  $\ffam^{\sa\sa}(\hat{\mod})$ [the input is $\hat{\mod}\in\ffam^{\sa}(N)$, where $|N|=4$]
\begin{enumerate}
\item put $\mod:=\hat{\mod}$
\item apply to $\mod$ the following cycle governed by $E\subseteq N$, $|E|=2,3$:
\begin{itemize}[leftmargin=2em, rightmargin=1em]
\item[(a)] compute the self-adhesive closure $\ffam^{\sa\sa}(\mod|E)$ using Lemma~\ref{lem.self-adhesion-at-set}, where the subroutine is called to compute the $\ffam^{\sa}$-closures,
\item[(b)] test whether $\ffam^{\sa\sa}(\mod|E)\setminus\mod\neq\emptyset$\,:
    \begin{itemize}[leftmargin=2em, rightmargin=1em, parsep=0pt]
    \item if yes then put  $\mod:=\ffam^{\sa\sa}(\mod|E)$, exit the cycle, and go to 2,
    \item if not then continue within the cycle with the next set $E$,
    \end{itemize}
\end{itemize}
\item if $\mod\neq\hat{\mod}$ then
$\hat{\mod}\not\in\ffam^{\sa\sa}(N)$
and its closure is $\ffam^{\sa\sa}(\hat{\mod}):=\mod$; exit.
\end{enumerate}

Note that, in Step 2, by Lemma~\ref{lem.iter-copy}(i)(ii) and Corollary~\ref{cor.lift-self-adhes-01}, we need not test sets $E$ of cardinalities $0,1$, and $4$.
On the other hand, since we are not sure whether $\ffam^{\sa}$ is closed under tight replication (see Open problem~\ref{open.sa-tight-repli}) we have to test sets $E$ of cardinality~$3$.
So, the number of sets $E$ in line 2
is at most~10.
\smallskip

To check double self-adhesivity over $4$ variables one needs to apply the subroutine for $|M|=5,6$, in which case the number of sets $L$ in step 2 of the subroutine is at most 50. Also, to compute
$\ffam^{\sa}(\altmod|L)$  one applies the $\ffam$-closures over at most 10 variables.

\section{Table of our results in case of 4 variables} \label{app.table}

\begin{center}
\begin{tabular}{|c|cc|cc|cc|} \hline
\mbox{\rm family} & \mbox{\rm models} & \mbox{\rm in types}& \mbox{\rm irreducibles}& \mbox{\rm in types}&
 \mbox{\rm coatoms}& \mbox{\rm in types}\\ \hline\hline
$\semgr(N)$           & $26\,424$ & $1512$ & $181$ & $20$ & $37$ & $10$ \\ \hline
$\semgr^{\sa}(N)$     & $23\,190$ & $1352$ & $385$ & $29$ & $31$ & $9$  \\ \hline \hline
$\strum(N)$           & $22\,108$ & $1285$ & $37$  & $10$ & $37$ & $10$ \\ \hline
$\strum^{\sa}(N)$     & $20\,968$ & $1224$ & $85$  & $13$ & $31$ & $9$  \\ \hline \hline
$\gra(N)$             & $6482$    & $421$  & $408$ & $32$ & $14$ & $4$  \\ \hline
$\gra^\sa(N)$         & $6482$    & $421$  & $408$ & $32$ & $14$ & $4$  \\ \hline \hline
$\gra^\dual(N)$       & $6482$    & $421$  & $408$ & $32$ & $14$ & $4$  \\ \hline
$(\gra^\dual)^\sa(N)$ & $6182$    & $404$  & $428$ & $33$ & $14$ & $4$  \\ \hline \hline
$\cogr(N)$            & $2084$    & $157$  & $470$ & $30$ & $6$  & $1$  \\ \hline
$\cogr^\sa(N)$        & $2084$    & $157$  & $470$ & $30$ & $6$  & $1$  \\ \hline
\end{tabular}
\end{center}

\section{Irreducible self-adhesive structural models}\label{app.sa-strum-irred}

Here we give the list of permutational types of (meet) irreducible elements of the lattice $(\strum^{\sa}(N),\subseteq)$ in case $|N|=4$:\footnote{Recall that we apply the notational convention from Definition~\ref{def.global-CI}.}
\begin{itemize}
\item[\fbox{I.}] $\{\,\, [ijk,\ell|\emptyset],\, [ij,k|\emptyset],\,  [ij,k|\ell] \,\,\}$,
\item[\fbox{II.}] $\{\,\, [ijk,\ell|\emptyset],\, ij|k, ij|k\ell,\, ik|j, ik|j\ell,\, jk|i, jk|i\ell \,\,\}$,
\item[\fbox{III.}] $\{\,\, [ij,k|\ell],\,  [ij,\ell|k],\, ij|k, ij|\ell, ij|k\ell,\, ik|j,\, i\ell|j,\,
jk|i,\, j\ell|i,\, k\ell|i, k\ell|j, k\ell|ij \,\,\}$,
\item[\fbox{IV.}] $\{\,\, [ijk,\ell|\emptyset],\, ij|\emptyset, ij|\ell,\, ik|\emptyset, ik|\ell,\, jk|\emptyset, jk|i\ell \,\,\}$,
\item[\fbox{V.}] $\{\,\, [ij,k|\emptyset],\,  [ij,\ell|\emptyset],\, ij|\emptyset, ij|k, ij|\ell,\, ik|\ell,\, i\ell|k,\,
jk|\ell,\, j\ell|k,\, k\ell|\emptyset, k\ell|i, k\ell|j \,\,\}$,
\item[\fbox{VI.}] $\{\,\, [ij,k|\ell],\,  [ij,\ell|k],\, ij|\emptyset,\, ik|\emptyset,\, i\ell|\emptyset,\,
jk|\emptyset,\, j\ell|\emptyset,\, k\ell|ij \,\,\}$,
\item[\fbox{VII.}] $\{\,\, ij|\emptyset, ij|k\ell,\, ik|\emptyset, ik|j\ell,\, i\ell|\emptyset, i\ell|jk,\,
jk|\emptyset, jk|i\ell,\, j\ell|\emptyset, j\ell|ik,\, k\ell|\emptyset, k\ell|ij \,\,\}$,
\item[\fbox{VIII.}] $\{\,\, [ij,k|\ell],\, ij|\emptyset, ij|\ell, ij|k\ell,\, ik|\emptyset,\, i\ell|jk,\,
jk|\emptyset,\, j\ell|ik,\, k\ell|ij \,\,\}$,
\item[\fbox{IX.}] $\{\,\, [ij,k|\emptyset],\, ij|\emptyset, ij|k, ij|k\ell,\, ik|j\ell,\, i\ell|\emptyset,\,
jk|i\ell,\, j\ell|\emptyset,\, k\ell|\emptyset \,\,\}$,
\item[\fbox{X.}] $\{\,\, ij|\emptyset, ij|k, ij|\ell,\, ik|\ell,\, jk|\ell,\, k\ell|i, k\ell|j, k\ell|ij \,\,\}$,
\item[\fbox{XI.}] $\{\,\, ij|\emptyset, ij|k, ij|\ell,\, ik|\ell,\, j\ell|k,\, k\ell|i, k\ell|j, k\ell|ij \,\,\}$,
\item[\fbox{XII.}] $\{\,\, ij|\emptyset, ij|k, ij|\ell,\, ik|\ell,\, i\ell|k,\, jk|\ell, k\ell|j, k\ell|ij \,\,\}$,
\item[\fbox{XIII.}]  $\{\,\, ij|\emptyset, ij|k, ij|\ell,\, ik|\ell,\, i\ell|k,\, jk|\ell,\, j\ell|k,\, k\ell|ij \,\,\}$.
\end{itemize}
The types \fbox{I.}--\fbox{IX.} involve all
31 coatoms of the lattice. Their notation
follows the classification used in
\cite[Appendix B]{Stu21}. The actual
numbers of involved elementary CI statements are as follows:
\begin{itemize}[leftmargin=2em, rightmargin=1em]
\item \fbox{I.} consists of 20 CI statements,
\item \fbox{II.}--\fbox{V.} consists of 18 CI statements,
\item \fbox{VI.} consists of 14 CI statements,
\item \fbox{VII.}--\fbox{IX.} consists  of 12 CI statements, and
\item \fbox{X.}--\fbox{XIII.} consists of 8 CI statements.
\end{itemize}

\end{document}